
\def\AUTHORSIN{1}


\documentclass[reqno,a4paper,12pt]{amsart} 

\usepackage{amsmath,amscd,amsfonts,amssymb}
\usepackage{mathrsfs,dsfont}

\usepackage{xcolor,graphicx}

\usepackage[normalem]{ulem}


\ifdefined\MIHALIS  
\usepackage{txfonts,pxfonts,tikz} 
\usepackage{tgschola}
\usepackage[T1]{fontenc}
\usepackage[allcolors=blue,allbordercolors=blue,pdfborderstyle={/S/U/W 1}]{hyperref}
\usepackage[ocgcolorlinks]{ocgx2}
\usepackage{bbm}

\ifdefined\SMART 
\usepackage[paperwidth=10cm,paperheight=20cm,left=5mm,right=5mm,top=10mm,bottom=5mm]{geometry}
\setlength{\textwidth}{\paperwidth}
\addtolength{\textwidth}{-1cm}
\sloppy
\else
\setlength{\textwidth}{\paperwidth}
\addtolength{\textwidth}{-2in}
\calclayout
\fi

\fi 


\newcommand{\Set}[1]{{\left\{{#1}\right\}}}

\newcommand{\RR}{{\mathbb R}}

\renewcommand{\dim}{{\rm dim\,}}




\numberwithin{equation}{section}
\numberwithin{figure}{section}

\addtolength{\topmargin}{-0.25cm}
\addtolength{\textheight}{0.5cm}
\addtolength{\hoffset}{-1.5cm}
\addtolength{\textwidth}{3cm}

\parskip .06in

\newcommand\R{\mathbb{R}}

\newcommand\Z{\mathbb{Z}}

\newcommand\al{\alpha}
\newcommand\be{\beta}
\newcommand\gam{\gamma}

\newcommand\del{\delta}
\newcommand\Del{\Delta}

\newcommand\lam{\lambda}
\newcommand\Lam{\Lambda}

\newcommand\1{\mathds{1}}
\newcommand\eps{\varepsilon}

\newcommand\pphi{\varphi}

\renewcommand\le{\leqslant}
\renewcommand\ge{\geqslant}
\renewcommand\leq{\leqslant}
\renewcommand\geq{\geqslant}
\newcommand\sbt{\subset}
\newcommand\tint{{\textstyle \int}}

\renewcommand\hat{\widehat}

\renewcommand\S{S}

\newcommand{\ft}[1]{\widehat{#1}}
\newcommand{\dotprod}[2]{\langle #1 , #2 \rangle}

\newcommand{\supp}{\operatorname{supp}}

\newcommand{\cm}{\complement}

\theoremstyle{plain}
\newtheorem{thm}{Theorem}[section]
\newtheorem{lem}[thm]{Lemma}
\newtheorem{lemma}[thm]{Lemma}
\newtheorem{cor}[thm]{Corollary}

\newtheorem{prop}[thm]{Proposition}

\newtheorem*{claim*}{Claim}

\newcommand{\thmref}[1]{Theorem~\ref{#1}}
\newcommand{\secref}[1]{Section~\ref{#1}}

\newcommand{\lemref}[1]{Lemma~\ref{#1}}

\newcommand{\propref}[1]{Proposition~\ref{#1}}

\theoremstyle{definition}
\newtheorem{definition}[thm]{Definition}
\newtheorem*{definition*}{Definition}
\newtheorem*{remarks*}{Remarks}
\newtheorem*{remark*}{Remark}

\newenvironment{enumerate-roman}
{\begin{enumerate}
\addtolength{\itemsep}{5pt}
}
{\end{enumerate}}

\newenvironment{enumerate-alph}
{\begin{enumerate}
\addtolength{\itemsep}{5pt}
}
{\end{enumerate}}

\newenvironment{enumerate-num}
{\begin{enumerate}
\addtolength{\itemsep}{5pt}
}
{\end{enumerate}}

\newenvironment{enumerate-text}
{\begin{enumerate}
\addtolength{\itemsep}{5pt}
}
{\end{enumerate}}


\begin{document}

\title{The Tur\'{a}n and Delsarte problems and their duals}

\ifdefined\AUTHORSIN

\author[M. Kolountzakis]{Mihail N. Kolountzakis}
\address{Department of Mathematics and Applied Mathematics, University of Crete, Voutes Campus, 70013 Heraklion, Greece and Institute of Computer Science, Foundation of Research and Technology Hellas, N. Plastira 100, Vassilika Vouton, 700 13, Heraklion, Greece}
\email{kolount@uoc.gr}

\author[N. Lev]{Nir Lev}
\address{Department of Mathematics, Bar-Ilan University, Ramat-Gan 5290002, Israel}
\email{levnir@math.biu.ac.il}

\author[M. Matolcsi]{M\'at\'e Matolcsi}
\address{HUN-REN Alfr\'ed R\'enyi Institute of Mathematics, Re\'altanoda utca 13-15, H-1053, Budapest, Hungary and Department of Analysis and Operations Research, Institute of Mathematics, Budapest University of Technology and Economics, M\H uegyetem rkp. 3., H-1111 Budapest, Hungary}
\email{matomate@renyi.hu}

\fi

\date{October 11, 2025}
\subjclass{42B10, 43A35, 52A41, 52C22}
\keywords{Tur\'{a}n problem, Delsarte problem, linear duality, packing, tiling}

\ifdefined\AUTHORSIN
\thanks{N.L.\ is supported by ISF Grant No.\ 1044/21.
M.M.\ is supported by the Hungarian National Foundation for Scientific Research, Grants No. K146387 and KKP 133819.}
\fi
\dedicatory{Dedicated to the memory of Jean-Pierre Gabardo}

\begin{abstract}
We study two optimization problems for positive definite functions on Euclidean space with restrictions on their support and sign: the Tur\'an problem and the Delsarte problem. These problems have been studied also for their connections to geometric problems of tiling and packing. In the finite group setting the weak and strong linear duality for these problems are automatic. We prove these properties in the continuous setting. We also show the existence of extremizers for these problems and their duals, and establish tiling-type relations between the extremal functions for each problem and the extremal measures or distributions for the dual problem. We then apply the results to convex bodies, and prove that the Delsarte packing bound is strictly better than the trivial volume packing bound for \emph{every} convex body that does not tile the space.
\end{abstract}

\maketitle

\ifdefined\MIHALIS
\tableofcontents
\fi


\section{Introduction}

The Tur\'an extremal problem on positive definite functions with restricted support (see \cite{Ste72} as well as the history and references in \cite{Rev11}) is a very natural question asking how large the integral of a positive definite function $f$ can be if its support is restricted to a domain $U$ and its value at the origin is normalized to be $f(0)=1$. The problem makes sense in every locally compact abelian (LCA) group and it is interesting in many such groups, including Euclidean spaces (our main focus here) and finite groups. It is easy to see that it suffices to consider continuous positive definite functions, or equivalently,  functions $f$ whose Fourier transform $\ft f$  is everywhere nonnegative on the dual group (see \cite[Section 1.4]{Rud62}).

If $U$ is a bounded origin-symmetric open set,
and  $A$ is a measurable set  of positive 
measure  such that $A-A \subset U$,
then the function $f = m(A)^{-1} \1_A \ast \1_{-A}$
is supported in $U$, is positive definite and
$f(0)=1$, hence the supremum in question is at least
as large as $\int f = m(A)$. 
A natural question is therefore to decide if the
\emph{Tur\'an constant} of $U$, defined as
\begin{equation}
\label{turan-constant}
T(U) = \sup \Big\{\int  f:
\text{$f(0)=1$, \, $f = 0$ on $U^\cm$, \, $\ft{f}$ is nonnegative} \Big\},
\end{equation}
is equal to
\begin{equation}\label{supma}
\sup\Set{m(A): A-A \subset U}.
\end{equation}
It is not hard to see that the answer is negative in this generality, 
i.e.\ there exist sets $U$ such that \eqref{turan-constant} is strictly greater than \eqref{supma} (examples in finite groups are easy to construct, see e.g.\ \cite{MR14}, and these examples can be carried over to the Euclidean setting in an obvious manner by placing small cubes around points).  

However, interestingly, the question remains open if we restrict ourselves to convex domains. If $U \sbt \R^d$ is an origin-symmetric convex body, then it is a consequence of the Brunn-Minkowski inequality that the quantity \eqref{supma} is the one that corresponds to the set $A= \frac12 U$ and hence is equal to $2^{-d}m(U)$. A major open problem is therefore to decide whether $T(U) = 2^{-d} m(U)$ for every  origin-symmetric convex body $U$. We say that $U$ is a \emph{Tur\'an domain} if it satisfies this equality. It is known that any convex body $U$ that tiles the space by translations, as well as the Euclidean ball in $\RR^d$, is a Tur\'an domain (and of course their linear images are too) \cite{Gor01, AB02, KR03}. It is also not hard to see that cartesian products of Tur\'an domains are again Tur\'an domains, but we know the quantity $T(U)$ for no other origin-symmetric convex body. Even the simplest cases, such as a regular octagon in the plane, are open. 

A related quantity to the Tur\'an constant is the {\em Delsarte constant} of $U$, defined as
\begin{equation}\label{delsarte-constant}
D(U) = \sup \Big\{\int f : \text{$f(0)=1$, \, $ f\le 0$ on $U^\cm$, \,
$\ft{f}$ is nonnegative} \Big\},
\end{equation}
so that the condition $f=0$ on $U^\cm$ in the Tur\' an problem is now replaced with $f\le 0$ on $U^\cm$. The Delsarte problem, introduced in \cite{Del72,DGS77}, has found many applications in estimating geometric quantities such as sphere packing densities, kissing numbers and more (see \cite{CE03,Via17,CKMRV17,CLS22} and the references in \cite{BR23}).

The Tur\'an and Delsarte extremal problems are optimization problems where we seek the optimum of a linear functional over a function space defined by a set of linear inequalities. In other words, they are {\em linear programs}, albeit infinite dimensional ones. Their \emph{dual} problems, called the \emph{dual Tur\'an} and \emph{dual Delsarte} problems, are therefore intimately connected to the \emph{primal} Tur\'an and Delsarte problems that we have defined above. These optimization problems are much better understood in the finite group setting, where, importantly, weak and strong duality are always valid. In \secref{secfin} we review the Tur\'an and Delsarte problems along with their duals in  finite groups, and establish tiling-type relations among the extremizers of these problems. We also connect the problems to the notions of tiling and spectrality in finite groups.

The extension of many of these results to the continuous setting (of domains in $\RR^d$) is not obvious, as in the setting of infinite dimensional linear programs duality may not hold \cite{AN87}. In the literature, results about weak and strong duality and the existence of extremizers have appeared under various conditions \cite{CLS22, BRR24, Gab24}. In this paper we present a unified treatment of the Tur\'an and the Delsarte problems and their duals in the Euclidean setting, and apply our results to obtain interesting consequences concerning the Delsarte packing bound, and properties of Tur\'an domains.

After some preliminaries in \secref{secP1}, we define the primal and dual Tur\'an and Delsarte problems in Sections \ref{s:turan} and \ref{s:delsarte} respectively. We first establish the corresponding weak linear duality inequalities, and then  proceed to prove that in fact strong linear duality  holds for both problems. We show that extremizers for the Tur\'an and Delsarte problems and their duals exist and satisfy tiling-type convolution equalities. 

In \secref{s:packing-tiling} we connect the Delsarte constant of the difference set $A-A$ to the density of packing by translated copies of a set $A \sbt \R^d$, and to tiling and spectrality properties of $A$. Finally, we apply our results to convex bodies. We prove that the Delsarte packing bound is strictly better than the trivial volume packing bound for every convex body $A$ that does not tile the space. We also give a possible path of attack for proving the existence of a convex body which is  not  a Tur\'an domain.


\section{The Tur\'an and Delsarte problems in finite groups}
\label{secfin}

We start our discussion in the context of finite abelian groups, which
motivates the forthcoming results in the Euclidean setting in the later sections.
The Tur\'an and Delsarte problems in finite groups were discussed
in detail in \cite{MR14}. We recall some relevant results for convenience. 

If $G$ is a finite abelian group and $\hat{G}$ is the dual group, 
then we define the Fourier transform of a function $f$ on $G$ as
\begin{equation}
\label{ffin}
\hat{f}(\gamma) = |G|^{-1/2} \sum_{x\in G} f(x)\gamma(-x), \quad \gam \in \hat{G},
\end{equation}
and the convolution of two functions $f$ and $h$ on $G$ as
\begin{equation}
\label{fhconv}
( f \ast h)(x) =  |G|^{-1/2} \sum_{y\in G} f(y) h(x-y), \quad x \in G.
\end{equation}
The Fourier transforms of 
$f \ast h$ and $f \cdot h$
are given by $ \ft{f} \cdot \ft{h}$
and $ \ft{f} \ast \ft{h}$ respectively.

\subsection{The Tur\'an problem}
Let $U \sbt G$ be a set with $0 \in U = - U$, i.e.\ $U$ is origin-symmetric
and contains the origin. A function $f$ on $G$
 is called  Tur\'an admissible if $f$ is a real-valued function
 supported on $U$, 
 $f(0)=1$  and $\hat{f}$ is nonnegative. 
 The \emph{Tur\'an constant} $T(U)$ 
 is the supremum of $ \ft{f}(0)$
 over all the Tur\'an admissible functions $f$. 

 We say that a
  function $h$ on $G$ is admissible for the 
 dual Tur\'an problem if $h(0)=1$, $h$ vanishes on $U \setminus  \{0\}$, 
 and $\hat{h}$ is nonnegative. 
The \emph{dual Tur\'an constant} $T'(U)$ is the  
supremum of $ \ft{h}(0)$ 
over all the  dual Tur\'an  admissible functions $h$.

It is easy to check using Plancherel's theorem 
that the inequality
$T(U) T'(U) \le 1$ holds, which is
referred to as  weak linear duality.
It was proved in \cite[Theorem 4.2]{MR14}
that moreover,  there is a strong linear duality:

\begin{thm}
 $T(U)T'(U)=1$ holds for every   $U \sbt G$ with
 $0 \in U = - U$.
\end{thm}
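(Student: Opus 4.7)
The plan is to treat the Tur\'an problem as a finite-dimensional linear program over $\R^G$ and to identify its LP dual, after a natural reparametrization, with the dual Tur\'an problem. Strong duality for finite-dimensional LPs will then give $T(U)T'(U)=1$, since the weak duality inequality $T(U)T'(U) \le 1$ is already in hand (obtained from Plancherel by testing $\sum_x f(x)\overline{h(x)} = \sum_\gamma \hat f(\gamma)\overline{\hat h(\gamma)}$ against a primal admissible $f$ and a dual admissible $h$, whose pointwise product is supported at $0$).

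More concretely, one writes the primal as: maximize $\sum_{x \in G} f(x)$ (which equals $|G|^{1/2}\hat f(0)$) over $f \in \R^G$ subject to $f(0) = 1$, $f(x) = 0$ for $x \in G \setminus U$, and $\hat f(\gamma) \ge 0$ for all $\gamma \in \hat G$. Attach a scalar multiplier $\mu \in \R$ to the equality $f(0) = 1$ and nonnegative multipliers $\nu(\gamma) \ge 0$ to the Fourier inequalities; the resulting Lagrangian is linear in the free coordinates $f(x)$, $x \in U \setminus \{0\}$. Requiring its supremum in $f$ to be finite forces one linear equation per such $x$, namely that a prescribed linear combination of the constant function and the inverse Fourier transform of $\nu$ vanishes on $U \setminus \{0\}$. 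Rearranging, I would define $h \in \R^G$ as $\mu^{-1}$ times a suitable linear combination of the indicator of $\{0\}$ and the inverse Fourier transform of $\nu$, with coefficients chosen so that $h(0) = 1$ and $h(x) = 0$ on $U \setminus \{0\}$; the Fourier transform $\hat h$ then comes out as a nonnegative combination of $\nu$ and the constant function, so $h$ is admissible for the dual Tur\'an problem, and a short calculation gives $\hat h(0) = 1/\mu$.

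This dictionary is a bijection between dual LP-feasible pairs $(\mu,\nu)$ and dual-Tur\'an-admissible functions $h$, matching the dual LP objective $\min \mu$ with $1/T'(U)$. Both programs are feasible (take $f$ and $h$ to be the indicator of $\{0\}$, whose Fourier transform is a positive constant) and bounded (since $\hat f \ge 0$ forces $|f(x)| \le f(0) = 1$), so the strong duality theorem for finite-dimensional LPs applies and gives $T(U) = 1/T'(U)$, as required. The only nontrivial step is the Fourier bookkeeping needed to pin down the coefficients in the definition of $h$ and to verify that objective values are matched under the bijection; no analytic input beyond this is required in the finite setting, which is what distinguishes this finite-group result from its Euclidean counterpart treated later in the paper. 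This is essentially the content of \cite[Theorem~4.2]{MR14} cited above.
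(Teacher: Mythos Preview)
Your approach is correct and is exactly the one used to prove this result: the paper does not give its own proof here but cites \cite[Theorem~4.2]{MR14}, where the finite-group strong duality is established precisely by recognizing the Tur\'an problem as a finite-dimensional linear program and invoking LP strong duality. So there is nothing to compare against beyond saying that your sketch follows the intended route.

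Two small bookkeeping slips are worth fixing. First, the correct ansatz for $h$ is $\mu^{-1}$ times a combination of the \emph{constant function} and $\check\nu$ (the inverse Fourier transform of $\nu$), not the indicator $\delta_0$ and $\check\nu$: Lagrangian stationarity on $U\setminus\{0\}$ reads $|G|^{-1/2}+\check\nu(x)=0$, so $h:=\mu^{-1}\bigl(|G|^{-1/2}+\check\nu\bigr)$ vanishes on $U\setminus\{0\}$, has $h(0)=1$ from the $x=0$ equation, and $\hat h=\mu^{-1}(\delta_0+\nu)\ge 0$. Second, this gives $\hat h(0)=(1+\nu(0))/\mu$, not $1/\mu$, so the map $(\mu,\nu)\mapsto h$ is not a bijection onto admissible $h$'s. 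What is true is the inequality $\hat h(0)\ge 1/\mu$, together with the converse observation that any admissible $h$ with $\hat h(0)>0$ comes from the feasible pair $\mu=1/\hat h(0)$, $\nu=\mu\hat h-\delta_0$ (for which $\nu(0)=0$). These two directions show $\inf\mu=1/T'(U)$, and then strong LP duality yields $T(U)=1/T'(U)$. Your feasibility and boundedness checks are fine; one should also note that the LP is naturally posed on the space of real \emph{even} functions supported on $U$, so that the constraints $\hat f(\gamma)\ge 0$ are genuine real linear inequalities.
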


Since the set of Tur\'an admissible functions is compact and the mapping
$f \mapsto \ft{f}(0)$ is continuous, there exists at least one
extremal function $f$ for  the Tur\'an problem.
Similarly, also  the dual  Tur\'an problem
admits an extremal function $h$.

\begin{prop}
If $f$ and $h$ are 
extremal functions for 
 the Tur\'an problem and its dual
 respectively, 
 then $f \ast h = |G|^{-1/2}$ and $\ft{f} \cdot \ft{h} = \del_0$.
\end{prop}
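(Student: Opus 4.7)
The plan is to use the fact that the supports of $f$ and $h$ intersect only at the origin, combined with Plancherel and the strong duality $T(U)T'(U) = 1$, to force $\hat f \cdot \hat h$ to vanish everywhere except at the origin of $\hat G$. The two displayed identities are Fourier-inverse to one another, so one short computation will give both.

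Since $f$ is supported on $U$ and $h$ vanishes on $U \setminus \{0\}$, the pointwise product $f \cdot h$ is supported at $\{0\}$, with $(fh)(0) = f(0) h(0) = 1$; hence $\sum_{x \in G} f(x) h(x) = 1$. Applying Plancherel (valid because $f, h$ are real and $\hat f, \hat h$ are real and nonnegative) yields
\[
1 \;=\; \sum_{x \in G} f(x) h(x) \;=\; \sum_{\gam \in \hat G} \hat f(\gam) \hat h(\gam).
\]
Every summand on the right is nonnegative, and by extremality together with strong duality the single term at $\gam = 0$ already equals $\hat f(0) \hat h(0) = T(U) T'(U) = 1$. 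Consequently $\hat f(\gam) \hat h(\gam) = 0$ for every $\gam \neq 0$, which is exactly the identity $\hat f \cdot \hat h = \del_0$ on $\hat G$.

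It remains to translate this into the claim about $f \ast h$. Using the convolution--multiplication correspondence, $\widehat{f \ast h} = \hat f \cdot \hat h = \del_0$, and Fourier-inverting with the normalization \eqref{ffin}, one obtains that $f \ast h$ is the constant function equal to $|G|^{-1/2}$ on $G$. No real obstacle is anticipated: the only substantive input is strong duality, which is used to collapse the Plancherel sum onto the single frequency $\gam = 0$.
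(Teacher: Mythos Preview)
Your proof is correct and follows essentially the same argument as the paper: both compute $\sum_{x} f(x)\overline{h(x)} = 1$ from the support condition, apply Plancherel, and use strong duality $T(U)T'(U)=1$ together with nonnegativity of $\hat f, \hat h$ to force $\hat f(\gamma)\hat h(\gamma)=0$ for $\gamma\neq 0$. The only cosmetic difference is that the paper writes $\overline{h(x)}$ rather than $h(x)$ in the Plancherel step, but since $f\cdot h$ and $f\cdot\overline{h}$ are both supported at the origin with value $1$, this is immaterial.
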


Here and below we use $\del_0$ to denote the
function which takes the value $1$ at the origin
and vanishes everywhere else.
To prove the proposition we observe that
\begin{equation}
\label{eq:F.2.1}
1 =  \sum_{x \in G} f(x) \overline{h(x)}
 =  \sum_{\gam \in \hat{G}} \ft{f}(\gam) \ft{h}(\gam)
 \ge \ft{f}(0) \ft{h}(0) = T(U) T'(U) = 1,
\end{equation}
and as a consequence, the 
inequality in \eqref{eq:F.2.1} is in fact an equality.
Hence $ \ft{f}(\gam) \ft{h}(\gam) = 0$ for all nonzero $\gam \in \hat{G}$,
which implies the claim.

\subsubsection*{Remark}
We obviously also have
$f \cdot h = \del_0$
and therefore $\ft{f} \ast \ft{h} = |G|^{-1/2}$,
but this holds for arbitrary admissible $f$ 
and $h$, not only for extremal functions.

\subsection{The Delsarte problem}
We again assume that 
 $U \sbt G$ is a set with $0 \in U = - U$.
 A function $f$ on $G$
 is called Delsarte admissible if
 $f$ is a real-valued function
 such that $f(0)=1$, $f(x) \le 0$ for all $x \in G \setminus U$,
 and $\hat{f}$ is nonnegative. 
 The \emph{Delsarte constant} $D(U)$ 
 is the supremum of $ \ft{f}(0)$
 over all the Delsarte admissible functions $f$. 

 We say that a
  function $h$ on $G$ is admissible for the 
 dual Delsarte problem if $h(0)=1$, 
 $h$ vanishes on $U \setminus  \{0\}$, 
 $h$ is nonnegative on $G \setminus U$,  
 and $\hat{h}$ is  nonnegative everywhere. 
The \emph{dual Delsarte constant} $D'(U)$ is  the  
supremum of $ \ft{h}(0)$ 
over all the dual Delsarte admissible 
functions $h$.

It is again easy to check that the
weak linear duality inequality
$D(U) D'(U) \le 1$ holds, while 
\cite[Theorem 4.2]{MR14})
establishes that
there is a strong linear duality:

\begin{thm}
 $D(U)D'(U)=1$ holds for every   $U \sbt G$ with
 $0 \in U = - U$.
\end{thm}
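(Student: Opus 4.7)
Since $G$ is finite, both the Delsarte problem and its dual are finite-dimensional linear programs over $\RR^G$, cut out by finitely many linear (in)equalities (the constraints $\ft f \geq 0$ and $\ft h \geq 0$ are finitely many, since $|\hat G| = |G|$). Each program is feasible ($\del_0$ is admissible for both) and bounded (positive definiteness forces $|f|, |h| \leq 1$), and has a closed, bounded, hence compact, admissible set, so extremizers $f^*$, $h^*$ exist by continuity of the linear objectives, and classical LP strong duality applies to both.

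The core step is to identify the LP dual of the primal Delsarte problem with (a rescaling of) the dual Delsarte problem. Introduce a free Lagrange multiplier $\al$ for $f(0)=1$ and nonnegative multipliers $\be(x) \geq 0$ (for $x \in G \setminus U$) and $\mu(\gam) \geq 0$ (for $\gam \in \hat G$) for the inequality constraints $f(x) \leq 0$ and $\ft f(\gam) \geq 0$, respectively. Stationarity of the Lagrangian in each $f(y)$ forces the function $h(y) := |G|^{-1/2}\sum_{\gam} \mu(\gam)\gam(-y)$ --- automatically positive definite since $\ft h(\gam) = \mu(-\gam) \geq 0$ --- to equal $-|G|^{-1/2}$ on $U \setminus \{0\}$ and to lie above $-|G|^{-1/2}$ on $G \setminus U$, with LP dual objective $\al = |G|^{-1/2} + h(0)$ to be minimized. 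Setting $\tilde h := (h + |G|^{-1/2})/(h(0) + |G|^{-1/2})$ produces a dual Delsarte admissible function: $\tilde h(0) = 1$, $\tilde h = 0$ on $U \setminus \{0\}$, $\tilde h \geq 0$ on $G \setminus U$, and $\ft{\tilde h} \geq 0$, since adding the constant $|G|^{-1/2}$ contributes only a nonnegative multiple of $\del_0$ to the Fourier transform.

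By complementary slackness at $\gam = 0$ --- where $\ft{f^*}(0) = D(U) > 0$ makes the primal constraint inactive --- the dual variable $\mu(0) = 0$, so $\ft h(0) = 0$ and hence $\ft{\tilde h}(0) = 1/(h(0) + |G|^{-1/2}) = 1/\al$. Strong LP duality gives $\al = D(U)$ at the optimum, so $\ft{\tilde h}(0) = 1/D(U)$, whence $D'(U) \geq 1/D(U)$. Combined with the weak duality $D(U)D'(U) \leq 1$ already noted, this yields $D(U)D'(U) = 1$. The main obstacle is the LP-dual bookkeeping --- tracking the sign conventions, performing the constant shift from level $-|G|^{-1/2}$ to $0$ on $U \setminus \{0\}$, and verifying preservation of $\ft{\tilde h} \geq 0$ throughout --- but once this is done, strong LP duality in finite dimensions delivers the result.
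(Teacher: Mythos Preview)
The paper does not actually prove this finite-group statement; it cites \cite[Theorem~4.2]{MR14}. Your argument via finite-dimensional LP duality is correct and is the standard route (and is essentially what \cite{MR14} does): form the Lagrangian, read off the LP dual, and recognize the LP-dual feasible set as an affine reparametrization of the dual Delsarte admissible set.

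Two small remarks. First, you implicitly need $h$ to be \emph{real-valued} so that $\tilde h$ is a legitimate dual Delsarte competitor. This holds provided the multipliers satisfy $\mu(-\gamma)=\mu(\gamma)$; since $\ft f$ is even on the admissible set (any real positive-definite $f$ is automatically even), the constraints $\ft f(\gamma)\ge 0$ and $\ft f(-\gamma)\ge 0$ are identical, so one may symmetrize $\mu$ without loss. It is worth saying this explicitly. Second, the appeal to complementary slackness is correct but unnecessary: from $\mu(0)\ge 0$ you already get
\[
\ft{\tilde h}(0)=\frac{\mu(0)+1}{\alpha}\ge \frac{1}{\alpha}=\frac{1}{D(U)},
\]
hence $D'(U)\ge 1/D(U)$, and weak duality then forces equality (and, a posteriori, $\mu(0)=0$). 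So the argument goes through without invoking slackness at $\gamma=0$.
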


The existence of 
extremal functions for the Delsarte
problem and its dual
 is again obvious.

\begin{prop}
If $f$ and $h$ are 
extremal functions for 
 the Delsarte problem and its dual
 respectively, 
 then
 \begin{enumerate-num}
 \item 
 \label{it:fde.1}
$f \cdot h = \del_0$,
and as a consequence, $\ft{f} \ast \ft{h} = |G|^{-1/2}$;
 \item
 \label{it:fde.2}
 $f \ast h = |G|^{-1/2}$,
 and 
$\ft{f} \cdot \ft{h} = \del_0$.
 \end{enumerate-num}
\end{prop}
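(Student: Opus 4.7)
The plan is to mirror the argument for the Tur\'an extremizers given in \eqref{eq:F.2.1}: apply Parseval's identity to the pair $(f,h)$ and sandwich the resulting sum between the value $1$ on both the spatial and the Fourier side. The forced equalities will give simultaneously $f\cdot h=\del_0$ and $\hat f\cdot \hat h=\del_0$, and the two convolution identities of the proposition will then follow immediately from the fact that the Fourier transform interchanges products and convolutions.

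A preliminary remark to record: every admissible $f$ (primal or dual) is automatically even, because $f$ is real-valued and $\hat f$ is real-valued (being nonnegative), and the identity $\overline{\hat f(\gam)} = \widehat{\check f}(\gam)$ with $\check f(x)=f(-x)$, combined with Fourier injectivity, forces $f=\check f$; the same holds for $h$. I will use this only at the very end, to identify $\hat{\hat f}\ast\hat{\hat h}$ with $f\ast h$.

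Now Parseval gives $\sum_{x\in G} f(x)h(x) = \sum_{\gam\in\hat G} \hat f(\gam)\hat h(\gam)$. Split the spatial sum over $\{0\}$, $U\setminus\{0\}$, and $G\setminus U$: the $x=0$ term equals $f(0)h(0)=1$; the sum over $U\setminus\{0\}$ vanishes because $h\equiv 0$ there; and every summand with $x\in G\setminus U$ is $\le 0$, since Delsarte admissibility of $f$ gives $f\le 0$ on $G\setminus U$ while dual Delsarte admissibility of $h$ gives $h\ge 0$ there. Hence the spatial side is $\le 1$. On the Fourier side every term $\hat f(\gam)\hat h(\gam)$ is nonnegative, and the contribution from $\gam=0$ alone is $\hat f(0)\hat h(0)=D(U)D'(U)=1$ by the strong duality theorem, so the Fourier side is $\ge 1$. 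Both inequalities must therefore be equalities. This forces $f(x)h(x)=0$ for every $x\in G\setminus U$, hence $f\cdot h=\del_0$; and $\hat f(\gam)\hat h(\gam)=0$ for every $\gam\ne 0$, hence $\hat f\cdot\hat h=\del_0$. Taking the Fourier transform of these two identities and using $\hat{\del_0}\equiv |G|^{-1/2}$ together with the evenness observation yields $\hat f\ast\hat h=|G|^{-1/2}$ and $f\ast h=|G|^{-1/2}$, which completes both \ref{it:fde.1} and \ref{it:fde.2}. The only mildly delicate point — hardly an obstacle — is the sign bookkeeping on $G\setminus U$ in the Parseval step; once that is in place, strong duality does all the work.
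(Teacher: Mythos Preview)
Your argument is correct and is essentially identical to the paper's proof: both sandwich the Parseval identity $\sum_x f(x)h(x)=\sum_\gam \ft f(\gam)\ft h(\gam)$ between $1=f(0)h(0)$ above and $\ft f(0)\ft h(0)=D(U)D'(U)=1$ below, then read off $f\cdot h=\del_0$ and $\ft f\cdot\ft h=\del_0$ from the forced equalities. The evenness detour you take to deduce $f\ast h=|G|^{-1/2}$ from $\ft f\cdot\ft h=\del_0$ is harmless but unnecessary: since $\widehat{f\ast h}=\ft f\cdot\ft h=\del_0$ and the Fourier transform is injective, $f\ast h$ must equal the constant $|G|^{-1/2}$ directly.
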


To prove this we note that
\begin{align}
1 & = f(0)h(0) \ge  \sum_{x \in G} f(x) h(x)
 =  \sum_{\gam \in \hat{G}} \ft{f}(\gam) \ft{h}(\gam) \label{eq:F.3.1} \\
 & \ge  \ft{f}(0) \ft{h}(0) = D(U) D'(U) = 1, \label{eq:F.3.2}
\end{align}
hence both 
inequalities  in \eqref{eq:F.3.1} and
\eqref{eq:F.3.2} are equalities.
It follows that $f(x) h(x) =0$ for all nonzero $x \in G$,
while 
 $ \ft{f}(\gam) \ft{h}(\gam) = 0$ for all nonzero $\gam \in \hat{G}$,
 which implies both conclusions
 \ref{it:fde.1} and \ref{it:fde.2} above.

\subsection{Difference sets}

Let $A \sbt G$ be an arbitrary nonempty set,
and denote
\begin{equation}
m(A) = |G|^{-1/2} |A|.
\end{equation}
We now connect
the Tur\'an and Delsarte constants of the difference
set $U = A-A$ to 
 tiling and spectrality properties of the set $A$.
We first observe that the function
 $f = m(A)^{-1} \1_A \ast \1_{-A}$
is Tur\'an  admissible and
$ \ft{f}(0)= m(A)$, hence
\begin{equation}
\label{eq:f.d.4}
D(A-A) \ge T(A-A) \ge  m(A).
\end{equation}

We say that  $A$ 
\emph{tiles by translations} if there is a set
$\Lam \sbt G$ such that the 
translated copies $A + \lam$, $\lam \in \Lam$,
form a partition of $G$. 
We say that  $A$ is a \emph{spectral set} if
it admits 
a system of characters
$\Lam \sbt \hat{G}$
which forms an orthogonal basis for
the space $L^2(A)$.

\begin{prop}
\label{prop:f.d.1}
If $A$ either tiles or is spectral, then 
\begin{equation}
\label{eq:f.d.1}
D(A-A) = T(A-A) = m(A).
\end{equation}
\end{prop}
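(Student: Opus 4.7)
The plan is to exploit the strong linear duality $D(U)D'(U)=1$ established above for finite abelian groups. Since \eqref{eq:f.d.4} already gives $D(A-A) \ge T(A-A) \ge m(A)$, proving the proposition reduces to establishing $D(A-A) \le m(A)$, which by strong duality is equivalent to exhibiting a dual Delsarte admissible function $h$ on $G$ with $\ft{h}(0) \ge 1/m(A)$. In each hypothesis on $A$ I would construct $h$ explicitly as the (normalized) autocorrelation of the ``complementary'' object.

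Suppose first that $A$ tiles, i.e.\ $A + \Lam = G$ is a partition, so $|\Lam| = |G|/|A|$. I would take
$$
h \;=\; \frac{|G|^{1/2}}{|\Lam|}\; \one_\Lam \ast \one_{-\Lam}.
$$
Then $h$ is nonnegative, $h(0) = 1$ and $\ft{h} = (|G|^{1/2}/|\Lam|)\,|\ft{\one_\Lam}|^2 \ge 0$. The content of the tiling assumption is that $(\Lam - \Lam) \cap ((A-A)\setminus\{0\}) = \emptyset$: any equation $\lam_1 - \lam_2 = a_1 - a_2 \ne 0$ would give two distinct representations $a_1 + \lam_2 = a_2 + \lam_1$ in the partition $A + \Lam$. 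Hence $h$ vanishes on $(A-A)\setminus\{0\}$, and a direct computation yields $\ft{h}(0) = |\Lam|/|G|^{1/2} = 1/m(A)$.

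Suppose instead that $A$ is spectral with spectrum $\Lam \sbt \hat{G}$, so $|\Lam| = |A|$ and the rows of the matrix $M = [\lam(a)]_{\lam \in \Lam,\, a \in A}$ are orthogonal of squared norm $|A|$. Put $s(x) = \sum_{\lam \in \Lam} \lam(x)$ on $G$ and take $h(x) = |s(x)|^2/|\Lam|^2$. Then $h \ge 0$, $h(0) = 1$, and a direct Fourier calculation gives $\ft{h} = (|G|^{1/2}/|\Lam|^2)\, \one_\Lam \ast \one_{-\Lam}$ as functions on $\hat{G}$, so $\ft{h} \ge 0$ with $\ft{h}(0) = |G|^{1/2}/|\Lam| = 1/m(A)$. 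Vanishing of $h$ on $(A-A)\setminus\{0\}$ translates to $\sum_{\lam \in \Lam}\lam(a_1-a_2) = 0$ for $a_1 \ne a_2$ in $A$, i.e.\ to the column orthogonality of $M$.

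The only non-routine point is this last step: column orthogonality of $M$. But since $M$ is square, row orthogonality together with the common row norm $|A|^{1/2}$ automatically forces column orthogonality, which is Fuglede's symmetry of the spectral relation between $A$ and $\Lam$ in a finite abelian group. Once this symmetry is in hand, both cases collapse to the same routine Fourier-convolution check, and strong linear duality closes the argument.
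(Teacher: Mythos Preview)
Your proof is correct. The tiling case is identical to the paper's, modulo notation (your $h$ equals the paper's $m(A)\cdot\one_\Lam\ast\one_{-\Lam}$), and in the spectral case you build the same function $h(x)=|A|^{-2}\bigl|\sum_{\lam\in\Lam}\lam(x)\bigr|^2$. One small remark: you invoke \emph{strong} duality, but in fact you only use the weak inequality $D(U)D'(U)\le 1$---exhibiting a dual-admissible $h$ with $\ft h(0)=1/m(A)$ immediately gives $D(A-A)\le m(A)$, and the paper phrases it this way as well.

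The only genuine difference lies in how you verify that $h$ vanishes on $(A-A)\setminus\{0\}$ in the spectral case. You argue directly: the character matrix $M=[\lam(a)]$ is square with orthogonal rows of common norm, hence has orthogonal columns, so $s(a_1-a_2)=0$ whenever $a_1\neq a_2$. The paper instead argues indirectly via convolution: since $\supp\ft h\subset\Lam-\Lam\subset\{\ft\one_A=0\}\cup\{0\}$, one has $\ft\one_A\cdot\ft h=\del_0$, whence $\one_A\ast h$ is constant; combined with $h\ge 0$ and $h(0)=1$ this forces $h=0$ on $(A-A)\setminus\{0\}$. Your matrix argument is more elementary and self-contained in the finite setting; the paper's convolution argument, on the other hand, is the template that survives in the Euclidean case (\thmref{thm:euspectral}), where there is no finite matrix to invert.
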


\begin{proof}
Assume first that $A$ tiles with a translation set
$\Lam \sbt G$. Then
\begin{equation}
\label{eq:f.d.2}
(A-A) \cap (\Lam - \Lam) = \{0\}, \quad 
|A| \cdot |\Lam| = |G|.
\end{equation}
Hence the function
 $h := m(A) \cdot \1_\Lam \ast \1_{ - \Lam }$
is dual Delsarte admissible with respect to the set $U = A-A$,
and satisfies
$ \ft{h}(0)  = m(A)^{-1}$.
Due to weak linear duality, this implies the inequality
$D(A-A) \le m(A)$.
Together with \eqref{eq:f.d.4} this implies
\eqref{eq:f.d.1}.

Next, suppose that $A$ is spectral,
i.e.\ there is  a  system of characters $\Lam
\sbt \hat{G}$ which
forms an orthogonal basis
for the space $L^2(A)$. Then
\begin{equation}
\label{eq:f.d.5}
 \Lam - \Lam\sbt \{ 
 \ft{\1}_A  = 0 \} \cup \{0 \}, \quad 
|\Lam| = \dim L^2(A) = |A|.
\end{equation}
Let $h$ be the function on $G$ defined by
\begin{equation}
\label{eq:f.d.6}
h(x) = |A|^{-2} \cdot \Big| \sum_{\gam \in \Lam} \gam(x) \Big|^2,
\quad x \in G,
\end{equation}
then both $h$ and $\ft{h}$ are nonnegative functions,
\begin{equation}
\label{eq:f.d.7}
 h(0)=1, \quad
\ft{h}(0) = m(A)^{-1},
\quad \supp \ft{h} \sbt \Lam-\Lam.
\end{equation}
This implies that
$\ft{\1}_A \cdot \ft{h} = \del_0$, and as a consequence,
 $\1_A \ast h = |G|^{-1/2}$.
Since $h(0)=1$ and $h$ is nonnegative,
this is possible only if
$h$ vanishes on  the set $(A-A) \setminus \{0\}$.
Hence 
 $h$ is dual Delsarte admissible with respect to $U=A-A$,
 so again by weak linear duality we obtain that
$D(A-A) \le m(A)$.  As before, 
together with \eqref{eq:f.d.4} this yields
\eqref{eq:f.d.1}.
\end{proof}

\subsubsection*{Remark}
The converse to \propref{prop:f.d.1} does not hold.
An example constructed in \cite{KLMS24} shows
that there is a finite abelian group $G$
and a set $A \sbt G$, such that
\eqref{eq:f.d.1} holds but 
$A$ neither tiles nor is spectral.

In the next sections we will see that adapting these results to the Euclidean setting is far from obvious.


\section{Euclidean setting preliminaries}
\label{secP1}

In this section we recall some necessary background
in the Euclidean setting
and fix notation that will be used throughout the paper
(see also \cite{Rud91} for more details).

\subsubsection*{Notation}

If $A \sbt \R^d$ then $A^\cm = \R^d \setminus A$ is the complement of $A$,
$\partial A$ is the boundary of $A$, and $m(A)$ is the Lebesgue measure of $A$.

\subsection{}
The \emph{Schwartz space} $\S(\R^d)$ consists of all infinitely smooth
functions $\varphi$ on $\R^d$ such that for every $n$ and 
every multi-index $k = (k_1,\dots,k_d)$, the seminorm
\[
\|\varphi\|_{n,k} := \sup_{x\in \R^d} |x|^n |\partial^k \varphi(x)|
\]
is finite.  A \emph{tempered distribution} is a 
linear functional  on the Schwartz space  
which is continuous with respect to the 
topology generated by this family of seminorms.
We use  $\alpha(\varphi)$ to denote  the action of
a  tempered distribution $\alpha$ on a
Schwartz function $\varphi$.

We use $\S'(\R^d)$ to denote the 
space of tempered distributions on $\R^d$.
A sequence of tempered distributions $\al_j$  is said to
converge in the space $\S'(\R^d)$ if there exists 
a  tempered distribution $\al$
such that $\al_j(\pphi) \to \al(\pphi)$
for  every 
Schwartz function $\varphi$.

If $\varphi$ is a Schwartz function on $\R^d$ then its Fourier transform
is defined by
\[
\ft \varphi (t)=\int_{\R^d} \varphi (x)  e^{-2\pi i\langle t,x\rangle} dx, \quad t \in \R^d.
\]
The Fourier transform of a  tempered distribution 
$\alpha$ is defined by 
$\ft{\alpha}(\varphi) = \alpha(\ft{\varphi})$.

If $\alpha$ is a tempered distribution  and if
$\varphi$  is a 
Schwartz function, then 
the product $\alpha \cdot \varphi$ is 
a tempered distribution   defined
by $(\alpha \cdot \varphi)(\psi) =
\alpha(\varphi \cdot \psi)$,
$\psi \in \S(\R^d)$.
The convolution
$\alpha \ast \varphi$ 
of a tempered distribution  
 $\alpha$ and 
a Schwartz function $\varphi$ is 
 an infinitely smooth function
which is also
a tempered distribution, and
whose Fourier transform is $\ft{\al} \cdot \ft{\varphi}$.

A tempered distribution  $\al$  is called
\emph{real} if $\al(\pphi)$ is a real scalar
for every real-valued $\pphi \in \S(\R^d)$.
We say that  $\al$ is 
\emph{even} if   $\al(\pphi) = 0$ for every 
odd  $\pphi \in \S(\R^d)$.

\subsection{}
If $\mu$ is a locally finite (complex) measure on $\R^d$,
then we say that $\mu$
is a tempered distribution if there exists a 
 tempered distribution $\al_\mu$ satisfying
 $\al_\mu(\pphi) = \int \varphi d \mu$
 for every smooth function $\pphi$ with compact support.
 If such $\al_\mu$ exists, then it is unique.

A measure $\mu$ on $\R^d$ is  called \emph{translation-bounded}
if there exists a constant $C$ such that
$ |\mu|(B + t) \le C$ for every $t \in \R^d$,  where 
$B$ is the open unit ball in $\R^d$.
If a measure $\mu$ is  translation-bounded,
then it is a tempered distribution.

If $\mu$  is a translation-bounded measure  on $\R^d$, and if $\nu$
is a finite measure  on $\R^d$, then the convolution $\mu \ast \nu$
is a translation-bounded measure.

\begin{lem}[{see \cite[Section 2.5]{KL21}}]
\label{lemB1}
Let $\nu$ be a finite measure on $\R^d$, and let
$\mu$ be a translation-bounded measure on $\R^d$
whose Fourier transform $\ft{\mu}$ is a locally finite measure.
Then the Fourier transform of the convolution $\mu \ast \nu$
 is the measure $\ft{\mu} \cdot \ft{\nu}$.
\end{lem}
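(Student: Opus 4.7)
The plan is to verify the identity by testing both sides against an arbitrary $\varphi \in C^\infty_c(\R^d)$ and then invoking the fact that two tempered distributions that agree on $C^\infty_c(\R^d)$ must coincide. Since $\mu\ast\nu$ is translation-bounded (as recorded just above the lemma), its Fourier transform is a well-defined tempered distribution, so the task reduces to showing that
\[
(\mu\ast\nu)^\wedge(\varphi) = \int \varphi \, d(\hat\mu \cdot \hat\nu)
\]
for every $\varphi \in C^\infty_c(\R^d)$.

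First I will unfold the left side. By the definition of the Fourier transform on tempered distributions, $(\mu\ast\nu)^\wedge(\varphi) = \int \hat\varphi \, d(\mu\ast\nu)$. Applying the standard convolution formula together with Fubini --- justified because $\hat\varphi$ is Schwartz and $|\mu|$ is translation-bounded, giving a uniform-in-$y$ bound on $\int |\hat\varphi(x+y)|\, d|\mu|(x)$ --- this rewrites as the iterated integral $\iint \hat\varphi(x+y)\, d\mu(x)\, d\nu(y)$. Next, setting $h_y(s) := \varphi(s)\, e^{-2\pi i \langle y, s\rangle}$, a direct computation shows $\hat{h_y}(x) = \hat\varphi(x+y)$, and $h_y \in C^\infty_c(\R^d)$ whenever $\varphi$ is. The inner integral therefore equals $\mu(\hat{h_y}) = \hat\mu(h_y) = \int h_y\, d\hat\mu$, where the middle equality is the very definition of $\hat\mu$ and the last holds because $h_y$ has compact support and $\hat\mu$ is a locally finite measure. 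A second Fubini application --- now justified by the compact support of $\varphi$, the local finiteness of $\hat\mu$, and the finiteness of $\nu$ --- converts the resulting double integral into $\int \varphi(s)\, \hat\nu(s)\, d\hat\mu(s) = \int \varphi\, d(\hat\mu \cdot \hat\nu)$, as required.

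The main obstacle, though not a deep one, is the bookkeeping of absolute integrability that justifies the two Fubini steps: the first combines the rapid decay of $\hat\varphi$ with the translation-boundedness of $\mu$, while the second exploits the compactness of $\supp \varphi$ together with the local finiteness of $\hat\mu$. I note that restricting to $\varphi \in C^\infty_c(\R^d)$ rather than working with all of $\S(\R^d)$ is not a loss of generality --- two tempered distributions agree on the full Schwartz space as soon as they agree on this dense subspace --- and it is in fact essential, since a locally finite measure that is tempered as a distribution need not have polynomially bounded total variation, so Schwartz functions need not be absolutely integrable against $|\hat\mu|$ without further hypotheses.
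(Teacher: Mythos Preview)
The paper does not give its own proof of this lemma; it simply cites \cite[Section 2.5]{KL21}. Your argument is a correct direct verification: test against $\varphi\in C^\infty_c(\R^d)$, unfold the convolution, use the definition of $\ft\mu$ on the modulated test function $h_y$, and swap integrals. Both Fubini justifications are sound as you describe them.

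One small wording point: you phrase the conclusion as ``two tempered distributions that agree on $C^\infty_c$ must coincide'', but a priori $\ft\mu\cdot\ft\nu$ is only a locally finite measure, not yet known to be tempered. This is harmless in the paper's framework: its stated convention is that a locally finite measure $\sigma$ \emph{is} a tempered distribution exactly when some tempered distribution agrees with $\int(\cdot)\,d\sigma$ on $C^\infty_c$. Your computation shows precisely that $(\mu\ast\nu)^\wedge$ plays this role for $\sigma=\ft\mu\cdot\ft\nu$, so the identification (and, as a byproduct, the temperedness of $\ft\mu\cdot\ft\nu$) follows.
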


A sequence of measures $\{\mu_j\}$ is said to be
\emph{uniformly translation-bounded} if 
there exists a constant $C$ 
such that $\sup_t |\mu_j|(B+t) \leq C$ for all $j$,
where $B$ is again the open unit ball in $\R^d$.
If $\{\mu_j\}$ is a uniformly
  translation-bounded sequence of measures,
then $\mu_j$ is said to \emph{converge vaguely}
 to a measure $\mu$ if for
every continuous, compactly supported function $\varphi$ we have
$\int \varphi  d\mu_j \to \int \varphi  d\mu$. In this case, the 
vague limit  $\mu$
must also be a translation-bounded measure. 
From any uniformly translation-bounded sequence of 
measures $\{\mu_j\}$  one can extract
a vaguely convergent subsequence.

Similarly, a sequence of finite measures 
$\{\mu_j\}$ on $\R^d$ is said to be \emph{bounded} if 
we have $\sup_j \int |d\mu_j| < + \infty$,
and the sequence $ \{ \mu_j \}$ is  said to \emph{converge
vaguely} to a finite measure $\mu$ if  we have
$\int \pphi d\mu_j \to \int \pphi d\mu$
for every continuous, compactly supported 
function $\pphi$.
Every bounded sequence of measures $\{ \mu_j \}$
 has a vaguely convergent subsequence.

\subsection{}
We use  $\delta_\lam$ to denote  the Dirac measure at the point $\lambda$.
If $\Lam \sbt \R^d$ is a finite or countable set, then we denote
$\del_\Lam = \sum_{\lam \in \Lam} \del_\lam$.

By  a \emph{lattice} $L \subset \R^d$ we mean the image of $\Z^d$ under an
invertible linear map $T$. The determinant $\det(L)$ is equal to $|\det (T)|$.
The dual lattice $L^*$ is the set of all vectors $s$ such that $\dotprod{l}{s}
 \in \Z$, $l \in L$.
The measure $\del_L$ is a tempered distribution, whose Fourier transform
is (by Poisson's summation formula)
the measure $\ft{\del}_L = (\det L)^{-1} \sum_{s \in L^*} \del_{s}$.

We say that a set $\Lam \sbt \R^d$ is 
\emph{locally finite} if the set $\Lam \cap B$ is finite
for every open ball $B$. We say that $\Lam$ is 
\emph{periodic} if there exists a lattice $L$ such that
$\Lam + L = \Lam$. If $\Lam$ is both locally finite
and periodic, then it
is a union of finitely many translates of $L$.


\section{The Tur\'{a}n problem and its dual}
\label{s:turan}

\subsection{Admissible domains for the Tur\'{a}n problem}
An open set $U \sbt \R^d$ is said to have a
\emph{continuous boundary}
if for each point $a \in \partial{U}$
there exist an open ball $B$ centered at $a$,
an orthogonal linear map $\pphi: \R^d \to \R^d$
and a continuous function $\psi: \R^{d-1} \to \R$,
such that
\begin{equation}
U \cap B = \{
\pphi(x_1, \dots, x_d)  :
x_d < \psi(x_1, \dots, x_{d-1})\} \cap B.
\end{equation} 

This means that
locally near each boundary point, the set $U$
consists of those points lying to one side
of the graph of some continuous function.
This geometric condition is quite general
and is satisfied by most
domains of practical interest.

We now fix an open set  $U \sbt \R^d$ with the following properties:
\begin{enumerate-num}
\item
$U$ is an open set of finite measure;
\hfill (\refstepcounter{equation}\label{it:tuadmiti}\theequation)
\item
$0 \in U = -U$, that is, $U$ is  origin-symmetric 
and contains  the origin;
\hfill (\refstepcounter{equation}\label{it:tuadmitii}\theequation)
\item
$U$ has a continuous boundary.
\hfill (\refstepcounter{equation}\label{it:tuadmitiii}\theequation)
\end{enumerate-num}

Note that the set $U$ may be
unbounded, disconnected, or both.

\subsection{The Tur\'{a}n constant}
\label{ss:turan}

A function $f$  on $\R^d$ will be called
\emph{Tur\'{a}n admissible} (with respect to the set $U$) 
if $f$ is a bounded continuous
real-valued function  vanishing on $U^\cm$,
such that $\ft{f}$ is nonnegative and $f(0) = 1$.

We first observe that if $f$ is 
Tur\'{a}n admissible then $f \in L^1(\R^d)$, 
since $f$ is bounded and vanishes off the set $U$
of finite measure. In turn, since
$f$ is continuous
and $\ft{f}$ is nonnegative,
it follows that
$\ft{f}$ is a continuous function 
belonging to  $L^1(\R^d)$.
Moreover, we have
$\|f\|_\infty = \int \ft{f} =  f(0)=1$.
Finally, both $f$ and $\ft{f}$ are even functions.

\begin{definition}
\label{def:turanconst}
The \emph{Tur\'{a}n constant} $T(U)$ is the supremum
of $\int f$ over all  the
  Tur\'{a}n admissible functions $f$.
\end{definition}

We note that the Tur\'{a}n constant is finite,
and in fact, $T(U) \le m(U)$. This is due to
the fact that $\|f\|_{\infty} \le 1$
 for any Tur\'an admissible function $f$.

\subsection{Difference sets and convex domains}
\label{ss:diffandconvex}
As an example, suppose that $U$ contains a difference set $A-A$,
where $A \sbt \R^d$
is a  bounded open set.
In this case, the function
 $f = m(A)^{-1} \1_A \ast \1_{-A}$ is 
Tur\'{a}n admissible, and 
$\int f = m (A)$.  As a consequence, 
the lower estimate $T(U) \ge m(A)$ holds.

A special case of particular interest is when 
 $U \sbt \R^d$ is a \emph{convex} 
bounded  origin-symmetric  open set. In this case, $U$
can be realized as the difference set $U=A-A$
 where $A = \frac1{2}U$. Hence  the function
$f = m(A)^{-1} \1_A \ast \1_{-A}$ is 
Tur\'{a}n admissible, and 
$T(U) \ge m(A) = 2^{-d} m(U)$. 
We note that any bounded open convex set 
has a continuous boundary,
see \cite[Corollary 1.2.2.3]{Gri85}, 
so $U$  satisfies
\eqref{it:tuadmiti}, \eqref{it:tuadmitii}, \eqref{it:tuadmitiii}. 

If a convex bounded  origin-symmetric  open set $U \sbt \R^d$
satisfies $T(U)= 2^{-d} m(U)$, then $U$
is called a \emph{Tur\'an domain}, cf. \cite{KR03}.
It is not known whether there exists a convex
bounded origin-symmetric open set $U \sbt \R^d$
 which is not a Tur\'an domain, i.e.\ such that $T(U) > 2^{-d} m(U)$.

\subsection{The dual Tur\'{a}n constant}
\label{ss:dual-turan}

We say that a tempered distribution  $\al$ on $\R^d$ 
is admissible for the dual Tur\'{a}n problem,
if it is of the form
$\al = \del_0 + \be$, where $\be$ is a tempered distribution supported in
the closed set $U^\cm$, and moreover $\al$ is positive definite,
which means that $\ft{\al}$ is a positive measure.

In this case, 
we may  write $\ft{\al} =   \ft{\al} (\{0\})  \del_0 + \mu$,
where $\ft{\al} (\{0\}) $ is the mass of the
atom at the origin, and $\mu$ is a positive measure on $\R^d$.

\begin{definition}
The \emph{dual Tur\'{a}n constant} $T'(U)$ is the supremum
of $ \ft{\al} (\{0\})$ over all  the
tempered distributions $\al$
which are admissible for the dual Tur\'{a}n problem.
\end{definition}

We first observe that the dual
Tur\'{a}n constant $T'(U)$ is a
strictly positive number, and in fact,
$T'(U) \ge m(U)^{-1}$.
Indeed, the 
tempered distribution $\al$ given by
\begin{equation}
\label{eq:T.1.8.3}
\al = \del_0 + m(U)^{-1} \1_{U^\cm},
\quad
\ft{\al} = m(U)^{-1} \del_0
+ (1 - m(U)^{-1} \ft{\1}_U),
\end{equation}
is admissible for the 
dual Tur\'{a}n problem 
 and satisfies 
$ \ft{\al} (\{0\}) = m(U)^{-1}$.

\subsection{Weak linear duality in the Tur\'{a}n problem}
\label{sec:T1.3}
\label{ss:weak-turan}

The first result we obtain is the following
inequality involving  the
Tur\'{a}n constant $T(U)$ and its dual $T'(U)$.

\begin{thm}
\label{thm:T1.1}
Let $U \sbt \R^d$ be an open set satisfying 
\eqref{it:tuadmiti}, \eqref{it:tuadmitii}, \eqref{it:tuadmitiii}.
Then 
\begin{equation}
\label{eq:T1.1}
T(U) T'(U) \le 1.
\end{equation}
\end{thm}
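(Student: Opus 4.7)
The plan is to combine Parseval's identity with a support argument in order to establish the equality $\int \ft f\,d\ft\al = 1$ for every Tur\'{a}n admissible $f$ and dual Tur\'{a}n admissible $\al$, which together with a trivial lower bound yields the desired inequality.

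First I would observe that since $f$ is a bounded continuous $L^1$ function with $\ft f\geq 0$ in $L^1$, Fourier inversion at the origin gives $f(0)=\int \ft f=1$. Writing $\ft\al = c\,\del_0+\mu$ with $c=\ft\al(\{0\})$ and $\mu\geq 0$ a tempered positive measure having no atom at the origin, the nonnegativity of $\ft f$ and $\mu$ gives the easy lower bound
$$
\int \ft f\, d\ft\al \,\geq\, c\,\ft f(0) \,=\, c\int f,
$$
so it suffices to prove $\int \ft f\, d\ft\al\leq 1$, which I would attempt to establish as an equality.

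The key support identity is: for any $\psi\in C_c^\infty(U)$, Parseval gives $\al(\psi)=\ft\al(\ft\psi)=\int \ft\psi\, d\ft\al$, while $\supp\psi$ (compact in $U$) and $\supp\be\sbt U^\cm$ are disjoint, so $\be(\psi)=0$ by distribution theory, yielding $\al(\psi)=\del_0(\psi)=\psi(0)$. Hence $\int \ft\psi\, d\ft\al = \psi(0)$ for all $\psi\in C_c^\infty(U)$. To transport this to $\psi=f$, I would introduce a Schwartz mollifier $G_\e$ with $\ft{G_\e}\geq 0$ and $\ft{G_\e}\nearrow 1$ monotonically as $\e\to 0$ (the Gaussian $G_\e(x)=\e^{-d/2}e^{-\pi|x|^2/\e}$, with $\ft{G_\e}(\xi)=e^{-\pi\e|\xi|^2}$, is a natural choice). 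When $U$ is bounded, $f$ has compact support, so $f_\e:=f\ast G_\e$ is a Schwartz function with $\ft{f_\e}=\ft f\cdot\ft{G_\e}\geq 0$. Monotone convergence then gives
$$
\al(f_\e) \,=\, \int \ft f\cdot\ft{G_\e}\, d\ft\al \,\longrightarrow\, \int \ft f\, d\ft\al,
$$
while decomposing $\al(f_\e)=f_\e(0)+\be(f_\e)$ and using $f_\e(0)\to f(0)=1$ reduces the problem to showing $\be(f_\e)\to 0$.

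The main obstacle is exactly this last limit. Since $f_\e$ is not compactly supported in $U$ and $\be$ may be a distribution of arbitrary order, the statement $\be(f_\e)\to 0$ is not formal. The strategy is to pick a smooth cutoff $\eta$ equal to $1$ on a $\del$-neighborhood of $U^\cm$ and vanishing on the deep interior of $U$, so that $\be(f_\e)=\be(\eta f_\e)$ by the localization of $\be$ to $U^\cm$. On this boundary layer $f$ is small by continuity (since $f\equiv 0$ on $U^\cm$), and $f_\e\to f$ uniformly. Using the continuous boundary assumption on $U$ to ensure controlled geometry of the layer $\{x:\dist(x,U^\cm)\leq 2\del\}$, and carefully balancing the modulus of continuity of $f$, the blow-up of derivatives of $G_\e$, and the finite order of $\be$, one shows that the relevant Schwartz seminorms of $\eta f_\e$ tend to $0$ as $\e\to 0$ followed by $\del\to 0$. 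This is the genuinely technical step; the unbounded case of $U$ requires additional care to retain Schwartz regularity of $f_\e$, but the conceptual argument is identical.
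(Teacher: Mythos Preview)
Your argument has a genuine gap at the step ``$\be(f_\eps)\to 0$''. You propose to control $\be(\eta f_\eps)$ by Schwartz seminorms of $\eta f_\eps$, but these seminorms \emph{blow up} as $\eps\to 0$. Concretely, for a multi-index $j$ with $|j|\ge 1$,
\[
\partial^j f_\eps(x)=\int f(y)\,\partial^j G_\eps(x-y)\,dy,
\]
and splitting the integral at radius $\del$ gives, for $x$ within distance $\del$ of $U^\cm$,
\[
|\partial^j f_\eps(x)|\;\lesssim\;\om_f(2\del)\,\|\partial^j G_\eps\|_1 + \text{(tail)}\;\sim\;\om_f(2\del)\,\eps^{-|j|/2}+o_\eps(1).
\]
Combined with $\|\partial^{j'}\eta\|_\infty\lesssim\del^{-|j'|}$, the $k$-th order seminorms of $\eta f_\eps$ behave like $\om_f(2\del)\,\eps^{-k/2}\del^{-k}$ for fixed $\del$ as $\eps\to 0$, which diverges. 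Since $\be$ can be a tempered distribution of positive order (indeed, for the ball the extremal dual object is known \emph{not} to be a measure), there is no way to pair it against $\eta f_\eps$ using only uniform smallness of $f_\eps$. No joint scaling $\eps=\eps(\del)$ rescues this: $\om_f$ is the modulus of continuity of an arbitrary continuous function and may decay slower than any power of $\del$. The continuous-boundary hypothesis on $U$ governs the geometry of $\partial U$, not the regularity of $f$, and cannot supply the missing derivative control.

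The paper avoids this obstacle by approximating in the opposite direction. Rather than convolving (which pushes support \emph{outward} into $U^\cm$), it uses the continuous-boundary hypothesis to construct, via a partition of unity on $\partial U$ and small \emph{inward} translations along locally chosen directions $\tau_j$, a smooth function $g$ with compact support contained in $U$, $g(0)=1$, and $\|\ft f-\ft g\|_1<\eps$. For such $g$ one has $g\cdot\al=\del_0$ exactly, hence $\ft g\ast\ft\al\equiv 1$. The passage to the limit is then carried out entirely on the Fourier side: since $\ft\al$ is translation-bounded, $\|\ft f-\ft g\|_1\to 0$ implies $\ft f\ast\ft\al=1$ a.e., and evaluating $\ft\al(\{0\})\,\ft f(0)\le 1$ gives the result. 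The crucial point is that the approximation is measured in $L^1(\ft{\;\cdot\;})$, which pairs well with a translation-bounded measure, rather than in Schwartz seminorms, which pair badly with a merely continuous $f$.
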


Note that since $T(U)$ is strictly positive,
this shows that $T'(U)$ must be finite.

The inequality \eqref{eq:T1.1}  implies that any 
tempered distribution  $\al$ 
admissible
 for the dual Tur\'{a}n problem, yields the upper bound 
 $T(U) \le \ft{\al}(\{0\})^{-1}$
 for  the Tur\'{a}n constant. This principle is
usually referred to as  \emph{weak linear duality}.

The proof of \thmref{thm:T1.1} requires several observations.

\subsubsection{}
First we need the following lemma,
 based on \cite[Lemma 2.4]{MMO14}. 
Note that the assumption that $U$ has
a continuous boundary plays a crucial role
in the proof.

\begin{lem}
\label{lem:T3.7.2}
Let $f$ be Tur\'{a}n admissible for an open set $U$ satisfying \eqref{it:tuadmiti}, \eqref{it:tuadmitii}, \eqref{it:tuadmitiii}. For any  $\eps > 0$
there is a smooth real-valued function $g$ with compact
support contained in $U$, $g(0)=1$, such that  
 $\|\ft{f} - \ft{g}\|_1 < \eps$.
\end{lem}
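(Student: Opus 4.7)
\medskip
\noindent\textbf{Proof plan.} The approach is a standard two-step regularization: mollify $f$, then correct its support using the continuous boundary of $U$, and finally normalize. I work throughout in the Wiener norm $\|h\|_A := \|\hat h\|_1$, which by Fourier inversion dominates the uniform norm ($\|h\|_\infty \le \|h\|_A$), so a Fourier-$L^1$ estimate automatically gives pointwise information about $h(0)$. For the first step, let $f_\sigma := f * \phi_\sigma$, where $\phi_\sigma$ is a smooth nonnegative mollifier of mass one supported in $B(0,\sigma)$. Then $\hat{f_\sigma} = \hat f \cdot \hat{\phi}_\sigma$; since $|\hat\phi_\sigma|\le 1$ with $\hat\phi_\sigma\to 1$ pointwise and $\hat f\in L^1$, dominated convergence gives $\|\hat f - \hat{f_\sigma}\|_1 \to 0$. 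The drawback is that $\supp f_\sigma \subset \bar U + \bar B(0,\sigma)$ may slightly exceed $U$.

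The key second step is an inward deformation that exploits the continuous boundary of $U$. Near each $a\in\partial U$ there are orthogonal coordinates in which $U$ is the subgraph $\{x_d<\psi(x')\}$, so translation by $-t\,e_d$ with $t>\sigma$ pushes a local $\sigma$-neighborhood of $\bar U$ strictly into $U$. Patching these local translations via a smooth locally finite partition of unity on $\partial U$ produces a smooth diffeomorphism $\Psi_\sigma:\R^d\to\R^d$ that equals the identity outside a compact neighborhood of $\supp f_\sigma$, is $C^\infty$-close to the identity as $\sigma\to 0$, and satisfies $\Psi_\sigma(\bar U + \bar B(0,\sigma))\subset U$. Define $g_1 := f_\sigma\circ\Psi_\sigma^{-1}$, which is smooth with compact support in $U$. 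To show that $\|\hat{g_1}-\hat{f_\sigma}\|_1\to 0$ I would invoke the Sobolev embedding $H^s(\R^d)\hookrightarrow A(\R^d)$ for $s>d/2$ (a Cauchy--Schwarz estimate applied to $\int|\hat h|$), together with the facts that $f_\sigma\in H^s$ for every $s$ and that the composition operator $h\mapsto h\circ\Psi_\sigma^{-1}$ is continuous on $H^s$ once $\Psi_\sigma$ is sufficiently close to the identity in a strong enough smooth topology.

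Finally, from $\|g_1-f\|_\infty \le \|\hat{g_1}-\hat f\|_1 \to 0$ it follows that $g_1(0)\to f(0)=1$, and for small enough $\sigma$ the rescaled function $g:=g_1/g_1(0)$ satisfies all the required properties with $\|\hat f-\hat g\|_1<\varepsilon$. The main obstacle is step 2 on both counts: the construction of $\Psi_\sigma$ from the continuous-boundary structure of $U$ via a local-to-global partition-of-unity patching (mildly complicated by the fact that $\bar U$ has finite measure but need not be compact), and the Wiener-norm continuity of the pullback, which relies crucially on the preliminary smoothing of $f$ to $f_\sigma$ since composition is not continuous on $A(\R^d)$ without additional regularity.
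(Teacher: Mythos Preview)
Your overall strategy---use the continuous-boundary hypothesis to push the support inward, then smooth and normalize---is the right one, and it matches the paper's. But two of your steps are real gaps, not just missing details, and the paper's execution avoids both by reordering the argument.

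First, the compactness issue is not ``mild.'' A Tur\'an admissible $f$ is only supported in $\bar U$, and $U$ has finite measure but may be unbounded; so $\supp f_\sigma$ need not be compact, your $\Psi_\sigma$ cannot be the identity outside a compact set, and the cover of $\partial U$ by boundary charts may be infinite with no uniformity. The paper fixes this up front by multiplying $f$ by $\psi(\delta\,\cdot)$ with $\psi$ smooth, compactly supported, $\psi(0)=1$, $\hat\psi\ge 0$: this keeps $f$ Tur\'an admissible and makes $\|\hat f_\delta - \hat f\|_1\to 0$ immediate, reducing to the compactly supported case.

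Second, building a single diffeomorphism $\Psi_\sigma$ from local inward translations via a partition of unity is delicate for a merely \emph{continuous} boundary: if $x$ lies in several charts $V_j$ with inward directions $\tau_j$, there is no reason the convex combination $\sum_j\rho_j(x)\tau_j$ still pushes $x$ (let alone the $\sigma$-thickened point) into $U$, since the $\tau_j$'s can point in very different directions when the boundary graphs are only continuous. The paper sidesteps this entirely by acting on the \emph{function} rather than on the \emph{space}: it writes $f=\sum_j f\varphi_j$ and translates each piece $f_j$ by its own fixed vector $\delta\tau_j$. This is linear, needs no diffeomorphism, and the Wiener-norm estimate is then trivial since $\|\widehat{f_j(\cdot-\delta\tau_j)}-\hat f_j\|_1\to 0$ by dominated convergence---no Sobolev embedding or composition-operator continuity required. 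Only \emph{after} the support is safely inside $U$ does the paper mollify, which is then harmless. In short: localize, translate pieces, then mollify, rather than mollify, then diffeomorph.
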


Note that the approximating function $g$ is generally
not Tur\'{a}n admissible, since its 
Fourier transform $\ft{g}$ need not be
a nonnegative function.

\begin{proof}[Proof of  \lemref{lem:T3.7.2}]
Let $\psi$ be a smooth real-valued function with compact support,
$\psi(0)=1$, such that $\ft{\psi}$ is nonnegative. Then the
function 
$f_\del(x) := f(x)   \psi(\del x) $
is Tur\'{a}n admissible, has
compact support, and
$\|\ft{f}_\del - \ft{f}\|_1 \to 0$
as $\del \to 0$. 
Hence, with no loss of generality we may assume that $f$ has compact support.

The closed support of $f$ is thus a
compact set $K$ contained in the
closure of $U$. Since $U$ has
a continuous boundary,  for each point
$a \in K \cap \partial U$ there is
a  small 
 open ball $V(a)$ centered at $a$,
 and there is a unit vector $\tau(a)$,
such that $K \cap V(a) + \del \tau(a) \sbt U$
for any sufficiently small $\del > 0$.
By compactness we may choose
finitely many points
$a_1, \dots, a_n \in K \cap \partial U$
such that the open balls
$V_j := V(a_j)$, $1 \le j \le n$,
cover  $K \cap \partial U$.

If we denote $V_0:=U$, then $V_0, V_1, \dots, V_n$ forms an
open cover of $K$.
Let $\pphi_0, \dots, \pphi_n$
be a smooth partition of unity 
subordinate to this open cover,
that is, each $\pphi_j$ is a smooth real-valued function with
 compact support contained in 
$V_j$,  and $\sum_{j=0}^{n} \pphi_j(x) = 1$ on $K$.
Hence, if
we denote $f_j := f \cdot \pphi_j$ and $\tau_j := \tau(a_j)$, 
then the function
\begin{equation}
h_\del(x) = f_0(x) + \sum_{j=1}^{n} f_j(x - \del \tau_j)
\end{equation}
has compact support contained in  $U$, and
 $\|\ft{f} - \ft{h}_\del \|_1  \to 0$ as $\del \to 0$.
Moreover, since the origin belongs to the interior of $U$, 
we may assume that the open balls $V_1, \dots, V_n$ do not contain
the origin, which implies that $h_\del(0)=f(0)=1$.

It thus remains to set $g := (h_\del \ast \chi)(0)^{-1} (h_\del \ast \chi) $,
 where
 $\chi$ is a smooth nonnegative function supported on a
 sufficiently small neighborhood of the origin, with $\int \chi = 1$.
\end{proof}

\subsubsection{}
The next observation implies that if
a tempered distribution $\al$ is admissible 
for the dual Tur\'{a}n problem, 
then the measure 
$\ft{\al}$ is translation-bounded.

\begin{lem}
\label{lem:T6.9.1}
Let $\al$ be a tempered distribution on $\R^d$ such that
$\al = \del_0$ in some open neighborhood
$V$ of the origin, and 
$\ft{\al}$ is a positive measure.
Then the measure
$\ft{\al}$ is  translation-bounded.
Moreover, we have
$ \sup_t \ft\al(B + t) \le C(V)$,
where $B \sbt \R^d$
is the open unit ball 
and $C(V)$ is a constant which depends only on $V$.
\end{lem}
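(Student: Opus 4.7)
The plan is to exploit the local identity $\alpha = \delta_0$ on $V$ by multiplying $\alpha$ against a carefully chosen smooth bump $\varphi$ that is compactly supported in $V$, satisfies $\varphi(0) = 1$, and has $\ft\varphi \geq 0$. For such $\varphi$ one immediately gets $\alpha \cdot \varphi = \delta_0$, hence after Fourier transforming $\ft\alpha \ast \ft\varphi = 1$. A lower bound $\ft\varphi \geq c$ on some ball $B(0,\rho)$ then forces a uniform bound $\ft\alpha(B(s,\rho)) \leq c^{-1}$, which upgrades to a bound on unit balls by a routine covering argument.

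To construct $\varphi$, I would fix $r > 0$ with $B(0, 2r) \subset V$, pick a real-valued $\eta \in C_c^\infty(\R^d)$ supported in $B(0,r)$ with $\int \eta \neq 0$, and set $\varphi := (\eta \ast \tilde\eta)/\|\eta\|_2^2$, where $\tilde\eta(x) := \eta(-x)$. Then $\supp \varphi \subset B(0, 2r) \subset V$, $\varphi(0) = 1$, and $\ft\varphi = |\ft\eta|^2 / \|\eta\|_2^2$ is continuous, nonnegative, and strictly positive at the origin, so there exist $c, \rho > 0$, depending only on $V$, with $\ft\varphi \geq c$ on $B(0, \rho)$. The support condition on $\varphi$ together with $\alpha = \delta_0$ on $V$ gives $\alpha \cdot \varphi = \varphi(0)\,\delta_0 = \delta_0$ by testing against arbitrary Schwartz functions; applying the Fourier transform and invoking the identity $\widehat{\alpha \cdot \varphi} = \ft\alpha \ast \ft\varphi$ then produces the pointwise equation
\begin{equation*}
\int \ft\varphi(s - t)\, d\ft\alpha(t) \;=\; 1 \qquad \text{for every } s \in \R^d.
\end{equation*}
Restricting this integral to $|s - t| \leq \rho$ gives $c \cdot \ft\alpha(B(s, \rho)) \leq 1$; and covering the open unit ball by a fixed number $N = N(\rho)$ of balls of radius $\rho$ yields $\ft\alpha(B + t) \leq N/c =: C(V)$ uniformly in $t$, as required.

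The one delicate point is interpreting $\widehat{\alpha \cdot \varphi} = \ft\alpha \ast \ft\varphi$ as a genuine pointwise identity of continuous functions rather than merely as an equation of tempered distributions. This hinges on the classical fact that any positive tempered measure $\ft\alpha$ has polynomial growth, which makes $\int \ft\varphi(s-t)\, d\ft\alpha(t)$ absolutely convergent for each $s$ and representable as a smooth function of $s$. Once this standard ingredient is in place, no further difficulty intervenes, and the constants $c$, $\rho$, $N$ are visibly determined by the choice of $r$ and $\eta$, which depend only on $V$.
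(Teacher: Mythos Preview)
Your proof is correct and follows essentially the same approach as the paper: multiply $\alpha$ by a bump $\varphi$ supported in $V$ to obtain $\alpha\cdot\varphi=\varphi(0)\delta_0$, take Fourier transforms, and use positivity of $\ft\alpha$ together with a lower bound on $\ft\varphi$ to control $\ft\alpha(B+t)$. The paper streamlines the last step by choosing $\varphi$ from the outset so that $\ft\varphi(-t)\ge \1_B(t)$, which delivers $\ft\alpha(B+t)\le(\ft\varphi\ast\ft\alpha)(t)=\varphi(0)$ directly and makes your covering argument unnecessary.
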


\begin{proof}
We choose and fix a smooth function  $\pphi$  with
compact support contained in $V$, and satisfying 
$\ft{\varphi}(-t) \ge  \1_B(t)$ for all $t \in \R^d$.
Then
\begin{equation}
 \ft{\al}(B+t)=\int \1_{B}(y-t)d\ft{\al}(y)\le 
 \int \ft{\varphi}(t-y)d\ft{\al}(y)
 = (\ft{\varphi}\ast \ft{\al})(t),
\end{equation}
due to the positivity of the measure $\ft{\al}$.
But note that
$\pphi \cdot \al = \pphi(0) \del_0$, 
which in turn implies that
$\ft{\varphi} \ast \ft{\al} = \pphi(0)$.
Hence the assertion holds with the
constant $C(V) = \pphi(0)$.
\end{proof}

\subsubsection{}
Let $f$ be any Tur\'{a}n  admissible function,
and let  $\al$ be any tempered distribution admissible 
for the dual Tur\'{a}n problem.
Then
$\ft{\al}$ is a positive, translation-bounded measure
(due to \lemref{lem:T6.9.1}),
while $\ft{f}$ is a nonnegative function in $L^1(\R^d)$.
It follows that the convolution $\ft{f} \ast \ft{\al}$ is a well-defined,
translation-bounded positive measure, which is also 
a locally integrable function. 

\begin{lem}
\label{lem:T3.1}
Let $f$ be a Tur\'{a}n admissible function,
and let $\al$ be an admissible tempered distribution  
 for the dual Tur\'{a}n problem.
Then   $\ft{f} \ast \ft{\al}  = 1$ a.e.
\end{lem}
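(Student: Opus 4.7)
The plan is to first establish the identity for smooth approximants $g$ with compact support strictly inside $U$, where it becomes immediate, and then to pass to the limit. The principal obstacle will be the last step: since $\ft{\al}$ is only translation-bounded (\lemref{lem:T6.9.1}) and not a finite measure, the $L^1$-convergence of the approximants' Fourier transforms does not directly propagate through convolution with $\ft{\al}$.

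By \lemref{lem:T3.7.2} I would choose a sequence $g_n$ of smooth real-valued functions with compact support contained in $U$, satisfying $g_n(0)=1$ and $\|\ft{f}-\ft{g_n}\|_1 \to 0$. Writing $\al=\del_0+\be$ with $\supp(\be)\sbt U^\cm$, the compactness of $\supp(g_n)$ inside the open set $U$ ensures it is at positive distance from the closed set $\supp(\be)$; hence $g_n\cdot\be=0$ as tempered distributions and
\[
g_n\cdot\al \;=\; g_n(0)\del_0 + g_n\cdot\be \;=\; \del_0.
\]
Since $g_n\in\S(\R^d)$, taking Fourier transforms yields $\ft{g_n}\ast\ft{\al} \equiv \ft{\del_0} = 1$, where the convolution is a well-defined smooth function because $\ft{g_n}$ is Schwartz and $\ft{\al}$ is a translation-bounded positive measure.

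For the limit passage, let $R>0$ and write $B_R$ for the open ball of radius $R$ centered at the origin. By positivity of $\ft{\al}$ and Fubini--Tonelli,
\[
\int_{B_R}\bigl|(\ft{g_n}-\ft{f})\ast\ft{\al}\bigr|(t)\,dt
\;\le\;\int\bigl(|\ft{g_n}-\ft{f}|\ast\1_{B_R}\bigr)(s)\,d\ft{\al}(s).
\]
I would then partition $\R^d$ into unit cubes $\{Q_k\}_{k\in\Z^d}$, each of $\ft{\al}$-mass at most a constant $C$ by \lemref{lem:T6.9.1}, and bound the supremum of $|\ft{g_n}-\ft{f}|\ast\1_{B_R}$ on each $Q_k$ by $\|\ft{g_n}-\ft{f}\|_{L^1(Q_k+B_R)}$. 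Since each point of $\R^d$ lies in at most $C_d(R+1)^d$ of the sets $Q_k+B_R$, summation yields an upper bound $C\cdot C_d(R+1)^d\,\|\ft{g_n}-\ft{f}\|_1$, which tends to zero. Thus $\ft{g_n}\ast\ft{\al} \to \ft{f}\ast\ft{\al}$ in $L^1_{\mathrm{loc}}(\R^d)$, and since each $\ft{g_n}\ast\ft{\al}$ is identically $1$, the limit must equal $1$ almost everywhere.
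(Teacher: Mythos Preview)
Your proof is correct and follows the same overall strategy as the paper: approximate $f$ via \lemref{lem:T3.7.2} by smooth functions $g_n$ compactly supported in $U$, observe that $g_n\cdot\al=\del_0$ and hence $\ft{g_n}\ast\ft{\al}=1$, then pass to the limit. The only difference lies in how the limit is taken. The paper tests against an arbitrary smooth compactly supported $\psi$: since $\ft{\al}$ is translation-bounded, $\ft{\al}\ast\psi$ is a bounded function, so $L^1$-convergence $\ft{g_n}\to\ft{f}$ gives pointwise convergence of $\ft{g_n}\ast(\ft{\al}\ast\psi)\to\ft{f}\ast(\ft{\al}\ast\psi)$, and associativity (Fubini) yields $(\ft{f}\ast\ft{\al})\ast\psi=\int\psi$ for all such $\psi$. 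Your route instead establishes $L^1_{\mathrm{loc}}$-convergence of $\ft{g_n}\ast\ft{\al}$ directly, via Fubini and a cube-decomposition that exploits the uniform bound on $\ft{\al}(Q_k)$. The paper's argument is a touch slicker and avoids the covering-number count; your argument is more quantitative and makes the role of translation-boundedness very explicit. Both are short and valid.
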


This is obvious if $f$ is a Schwartz function 
whose closed support is contained in $U$, 
since in this case the product $f \cdot \al$
is well-defined and is equal to $\del_0$,
and therefore  the convolution $\ft{f} \ast \ft{\al}$,
being the Fourier transform 
of $f \cdot \al$, is 
 the constant function $1$.

However,
if  $f$ is only a continuous function and 
$\al$ is a tempered distribution, then generally
 the product $f \cdot \al$ \emph{does not} make sense.
Hence, to prove \lemref{lem:T3.1}
 in the general case,  we shall  use the 
  approximation result given in \lemref{lem:T3.7.2}.

\begin{proof}[Proof of  \lemref{lem:T3.1}]
By \lemref{lem:T3.7.2} there is a sequence
of smooth real-valued functions $g_j$ with compact
support contained in $U$, $g_j(0)=1$, such that 
$\ft{g}_j \to \ft{f}$ in $L^1(\R^d)$.
The extra smoothness and support properties
of the functions $g_j$ imply that 
$g_j  \cdot \al =  \del_0$,
and as a consequence,
$\ft{g}_j \ast \ft{\al}$ is the constant function $1$. 
Now we let $j \to \infty$. 
Let $\psi$ be a smooth function with compact support. 
Since $\ft{\al}$ is a
 translation-bounded  measure,  the convolution
 $\ft{\al} \ast \psi$ is a bounded function. 
 Since $\ft{g}_j \to \ft{f}$ in $L^1(\R^d)$,
 it follows that 
$ \ft{g}_j  \ast ( \ft{\al} \ast \psi) \to 
 \ft{f}  \ast (\ft{\al} \ast \psi)  $ pointwise.
  In turn, this implies that 
  $( \ft{g}_j  \ast  \ft{\al}) \ast \psi \to 
( \ft{f}  \ast \ft{\al}) \ast \psi  $ pointwise,
since the convolution is associative  (by Fubini's theorem).
But $\ft{g}_j \ast \ft{\al} = 1$, so we conclude that
$( \ft{f}  \ast \ft{\al}) \ast \psi  = \int \psi $.
Since this holds
for an arbitrary smooth function $\psi$ with compact support,
this shows that 
$\ft{f} \ast \ft{\al} = 1$ a.e.
\end{proof}

\subsubsection*{Remark}
\lemref{lem:T3.1} does not hold if we drop 
the assumption that $U$ has a continuous boundary.
An example constructed in \cite[Section 3]{Lev22} shows
that there is a bounded open set $U \sbt \R$
satisfying \eqref{it:tuadmiti} and \eqref{it:tuadmitii}, 
but not \eqref{it:tuadmitiii}, such that for certain
admissible  $f$ and $\al$,
the function $\ft{f} \ast \ft{\al}$ does not
coincide a.e.\ with any constant.

\subsubsection{}
Finally we can 
establish the weak  linear duality
inequality \eqref{eq:T1.1}.

\begin{proof}[Proof of \thmref{thm:T1.1}]
Due to the definitions of the 
Tur\'{a}n constant $T(U)$ and its dual $T'(U)$,
it suffices to verify that if $f$ is a 
Tur\'{a}n admissible function,
and if $\al$ is a tempered distribution admissible 
for the dual Tur\'{a}n problem, 
then $ \ft{\al} (\{0\}) \int f \le 1$.

 We may write 
$\ft{\al} =   \ft{\al} (\{0\})  \del_0 + \mu$,
where $\mu$ is a positive measure.
By \lemref{lem:T3.1},
\begin{equation}
\label{eq:T1.1.6}
1 = \ft{f} \ast \ft{\al} = \ft{\al} (\{0\}) \ft{f} + \ft{f} \ast \mu 
\ge  \ft{\al} (\{0\}) \ft{f} 
\quad \text{a.e.}
\end{equation}
Since $\ft{f} $ is a 
continuous function,  this implies that
the inequality
$ \ft{\al} (\{0\}) \ft{f}(x) \le 1 $  must in fact
hold for every $x \in \R^d$.
In particular, we have
$ \ft{\al} (\{0\}) \ft{f}(0)  \le 1$,
as required.
\end{proof}

\subsection{Strong linear duality in the Tur\'{a}n problem}
\label{sec:T1.4}\label{ss:strong-turan}

Our next goal is to show that
the inequality \eqref{eq:T1.1} is in fact an equality.

\begin{thm}
\label{thm:T1.3}
Let $U \sbt \R^d$ be an open set satisfying 
\eqref{it:tuadmiti}, \eqref{it:tuadmitii}, \eqref{it:tuadmitiii}.
Then 
\begin{equation}
\label{eq:T1.3}
T(U) T'(U) = 1.
\end{equation}
\end{thm}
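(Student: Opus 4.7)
The plan is to prove $T(U) T'(U) \ge 1$, the reverse inequality being Theorem~\ref{thm:T1.1}. My strategy is to first produce extremizers for both the primal and the dual Tur\'{a}n problems, and then couple them through Lemma~\ref{lem:T3.1} together with a variational/tiling argument that forces the complementary slackness needed to close the duality gap.

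For the primal extremizer, I take a maximizing sequence $f_n$ of Tur\'{a}n admissible functions with $\int f_n \to T(U)$. Each $\hat{f}_n$ is a nonnegative continuous $L^1$ function with $\|\hat{f}_n\|_1 = f_n(0) = 1$, so $\{\hat{f}_n\,dx\}$ is a bounded sequence of probability measures on $\R^d$. Extracting a vaguely convergent subsequence with limit $\mu \ge 0$ and setting $f^* := \check{\mu}$, Bochner's theorem provides that $f^*$ is a bounded continuous positive definite function with $\hat{f}^* = \mu$. I then verify Tur\'{a}n admissibility of $f^*$: the equality $\int f^* = T(U)$ follows from the weak-$*$ convergence $f_n \to f^*$ in $L^\infty$ paired against $\1_U \in L^1$; the support condition $f^* = 0$ on $U^\cm$ is upgraded from a.e.\ to everywhere using the continuous boundary hypothesis \eqref{it:tuadmitiii}; and $f^*(0) = 1$ is forced by optimality, since otherwise $f^*/f^*(0)$ would be admissible with integral exceeding $T(U)$.

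For the dual extremizer, I take $\alpha_n = \delta_0 + \beta_n$ dual admissible with $\hat{\alpha}_n(\{0\}) \to T'(U)$. Applying Lemma~\ref{lem:T6.9.1} with a fixed neighborhood $V$ of $0$ whose closure lies inside $U$, the positive measures $\hat{\alpha}_n$ are uniformly translation-bounded. Extracting a vaguely convergent subsequence $\hat{\alpha}_n \to \nu$, a positive translation-bounded measure, and letting $\alpha^*$ be the tempered distribution with $\hat{\alpha}^* = \nu$, one has $\alpha_n \to \alpha^*$ in $\S'(\R^d)$. This transfers the support property $\supp(\alpha_n - \delta_0) \sbt U^\cm$ to $\alpha^*$; testing against a bump supported in a neighborhood of $0$ inside $U$ confirms that the $\delta_0$-coefficient of $\alpha^*$ is $1$, so $\alpha^*$ is dual admissible. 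The portmanteau inequality for vague convergence of positive measures on the compact set $\{0\}$ yields $\nu(\{0\}) \ge \limsup \hat{\alpha}_n(\{0\}) = T'(U)$, and the reverse inequality from the supremum definition gives $\hat{\alpha}^*(\{0\}) = T'(U)$.

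It remains to close the duality gap. By Lemma~\ref{lem:T3.1}, $\hat{f}^* \ast \hat{\alpha}^* = 1$ almost everywhere, and since both sides are continuous (convolution of an $L^1$ function with a translation-bounded measure), the identity holds pointwise. Decomposing $\hat{\alpha}^* = T'(U)\delta_0 + \sigma$ with $\sigma \ge 0$ and evaluating at $x = 0$ yields
\[1 = T(U) T'(U) + \int \hat{f}^* \, d\sigma.\]
The hard part will be proving that the nonnegative remainder $\int \hat{f}^* \, d\sigma$ vanishes. The key observation I plan to exploit is that any Tur\'{a}n admissible $f$ admits cosine modulations $\tilde f(x) := f(x) \cos(2\pi\langle x_0, x\rangle)$ which are again admissible with $\int \tilde f = \hat{f}(x_0)$; consequently $\hat{f}^*(x_0) \le T(U) = \hat{f}^*(0)$ for every $x_0$, so $\hat{f}^*$ attains its maximum at the origin. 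Combining this pointwise bound with the identity $\hat{f}^* \ast \sigma = 1 - T'(U) \hat{f}^*$ and a perturbative optimality argument on $\alpha^*$ — moving mass of $\sigma$ from the region $\{\hat{f}^* > 0\}$ to the atom at the origin while preserving dual admissibility — should force $\supp \sigma \sbt \{\hat{f}^* = 0\}$. This establishes the tiling-type identity $f^* \ast \alpha^* \equiv 1$ and therefore $T(U) T'(U) = 1$.
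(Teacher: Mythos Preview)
Your strategy diverges from the paper's Hahn--Banach separation argument: you first build extremizers $f^*$ and $\alpha^*$ by compactness, then couple them via \lemref{lem:T3.1} and try to force complementary slackness. The existence arguments are essentially fine (and your dual construction matches the paper's \thmref{thm:T8.1}\ref{it:tex.2}), but the final step has a genuine gap. From $\ft{f}^* \ast \ft{\alpha}^* = 1$ a.e.\ and $\ft{\alpha}^* = T'(U)\delta_0 + \sigma$, evaluating at the origin gives at best $T(U)T'(U) + \int \ft{f}^*\,d\sigma = 1$, which only recovers the \emph{weak} inequality $T(U)T'(U) \le 1$. To close the gap you must show the nonnegative remainder vanishes, and here you invoke a ``perturbative optimality argument on $\alpha^*$ --- moving mass of $\sigma$ from $\{\ft{f}^* > 0\}$ to the atom at the origin while preserving dual admissibility.'' This is precisely the hard step, and you have not supplied it: replacing $\ft{\alpha}^*$ by $\ft{\alpha}^* + \eps\delta_0 - \eps\rho$ for some $\rho \le \sigma$ preserves positivity of $\ft{\alpha}$, but preserving the condition $\alpha = \delta_0$ in $U$ forces $\check{\rho} \equiv 1$ on $U$, a constraint that has nothing to do with $\rho$ being supported on $\{\ft{f}^* > 0\}$. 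The cosine-modulation observation, while correct, only gives $\ft{f}^*(x_0) \le T(U)$ and does not help here. In fact the paper proves the complementary slackness relation $\supp \sigma \subset \{\ft{f}^* = 0\}$ (\thmref{thm:T3.7}) \emph{as a consequence of} strong duality, not as a route to it --- so your plan is circular as written.

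There is also a secondary gap: your claim that $\ft{f}^* \ast \ft{\alpha}^*$ is continuous, hence the identity can be evaluated pointwise at $0$, is not justified. The convolution of an $L^1$ function with a translation-bounded measure is a priori only locally integrable; the paper handles this in the proof of \thmref{thm:T3.7} by replacing $\ft{f}$ with a smooth compactly supported minorant $\psi \le \ft{f}$. The paper's own proof of \thmref{thm:T1.3} bypasses all of this by embedding the primal problem in a convex cone $K \subset L^1(\R^d) \times \R \times \R$ and showing that if the triple $(0, T'(U)^{-1}, 1)$ lay outside $\overline{K}$, a separating functional would produce a dual-admissible $\alpha$ with $\ft{\alpha}(\{0\}) > T'(U)$.
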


This is usually referred to as \emph{strong linear duality}.
This principle also inspires the
idea of the proof which will be given next.

\subsubsection{}
\label{sec:T1.4.8}
Let $X$ be the linear space over $\R$
 consisting of all the bounded continuous
real-valued and even functions $f$
  vanishing on $U^\cm$, and
such that $\ft{f} \in L^1(\R^d)$.
We note that if $f \in X$ then also
$\ft{f}$ is a real-valued and even continuous 
function.

We consider  $L^1(\R^d) \times \R \times \R$ 
as a Banach space  over $\R$ (the functions in
the first component are taken real-valued) and let $K$
be the subset consisting of all triples
\begin{equation}
\label{eq:T1.4}
(\ft{f} - u, {\textstyle \int}  f - a, f(0) + b)
\end{equation}
where $f \in X$, $u$ is a nonnegative function in $L^1(\R^d)$, 
and $a,b$ are nonnegative scalars. It is easy to see that
the set $K$ is a convex cone.

\begin{lemma}
\label{lem:T1.1}
The equality \eqref{eq:T1.3} holds if and only if the triple
\begin{equation}
\label{eq:T1.5}
(0,T'(U)^{-1},1)
\end{equation}
belongs to the closure of $K$.
Moreover, if this is the case then
there exists a Tur\'{a}n admissible function $f$ with $\int f = T(U)$.
\end{lemma}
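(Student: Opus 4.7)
The plan is to prove the two directions of the equivalence separately, with the existence of an extremizer emerging as a by-product of the nontrivial direction. For the easy implication $(\Rightarrow)$, I would assume $T(U) T'(U) = 1$, so that $T(U) = T'(U)^{-1}$, and choose a maximizing sequence of Tur\'an admissible functions $f_n$ with $\int f_n \to T'(U)^{-1}$. Since any admissible $f$ is automatically even and has $\ft f$ a continuous nonnegative $L^1$ function, each $f_n$ lies in $X$; taking $u_n := \ft{f}_n \ge 0$ and $a_n = b_n = 0$ yields triples $(0, \int f_n, 1) \in K$ converging in $L^1(\R^d) \times \R \times \R$ to $(0, T'(U)^{-1}, 1)$, placing that point in $\overline K$.

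For the converse direction, I would unfold the closure condition into sequences $f_n \in X$, $u_n \ge 0$ in $L^1(\R^d)$, and scalars $a_n, b_n \ge 0$ with $\|\ft{f}_n - u_n\|_1 \to 0$, $\int f_n - a_n \to T'(U)^{-1}$, and $f_n(0) + b_n \to 1$. From these I would extract three consequences: the negative parts $(\ft{f}_n)_-$ tend to zero in $L^1$; the bound $\int \ft{f}_n = f_n(0) \le 1 + o(1)$ forces $\int(\ft{f}_n)_+ \le 1 + o(1)$, so both $\|\ft f_n\|_1$ and $\|f_n\|_\infty \le \|\ft f_n\|_1$ are bounded; and $\int f_n \ge T'(U)^{-1} - o(1)$ (since $a_n \ge 0$). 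Because each $f_n$ is supported on $\overline U$ and $m(U) < \infty$, the sequence $\{f_n\}$ is bounded in $L^2(\R^d)$; passing to a subsequence I obtain a weak $L^2$ limit $f_n \rightharpoonup f$ with $f = 0$ a.e.\ on $U^\cm$. Testing weak convergence against $\1_U \cdot e^{-2\pi i \langle t, \cdot\rangle} \in L^2$ yields pointwise convergence $\ft{f}_n(t) \to \ft{f}(t)$, and testing against $\1_U \in L^2$ yields $\int f_n \to \int f \ge T'(U)^{-1}$. A further subsequence makes $(\ft{f}_n)_- \to 0$ a.e., forcing $\ft f \ge 0$ a.e.; Fatou then gives $\int \ft f \le \liminf \int (\ft{f}_n)_+ \le 1$, so in particular $\ft f \in L^1(\R^d)$.

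Next I would upgrade this weak limit to a genuine admissible function. Since $f \in L^2$ and $\ft f \in L^1$, Fourier inversion identifies $f$ a.e.\ with the bounded continuous function $\tilde f(x) := \int \ft f(t)\, e^{2\pi i \langle x, t\rangle}\, dt$. The a.e.\ vanishing of $f$ on $U^\cm$ combined with continuity forces $\tilde f \equiv 0$ on $\operatorname{int}(U^\cm)$, and the continuous boundary hypothesis on $U$ ensures that every $x \in \partial U$ is a limit of points of $\operatorname{int}(U^\cm)$, so by continuity $\tilde f \equiv 0$ on all of $U^\cm$. Moreover $\tilde f(0) = \int \ft f \in (0,1]$: strictly positive because $\tilde f(0) = 0$ would force $\ft f \equiv 0$ and then $\int \tilde f = 0$, contradicting $\int \tilde f = \int f \ge T'(U)^{-1} > 0$. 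Normalizing $f^{\star} := \tilde f / \tilde f(0)$ produces a Tur\'an admissible function with $\int f^{\star} = \int f / \tilde f(0) \ge T'(U)^{-1}$, and combining this with the weak duality $T(U) T'(U) \le 1$ from \thmref{thm:T1.1} forces the equality $\int f^{\star} = T(U) = T'(U)^{-1}$, simultaneously proving $T(U) T'(U) = 1$ and exhibiting the sought extremizer.

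The main obstacle is precisely this upgrade from the $L^2$-level limit to a pointwise admissible object: the weak $L^2$ limit $f$ is defined only almost everywhere, while Tur\'an admissibility demands that $\tilde f$ vanish \emph{pointwise} on the closed set $U^\cm$. For the open part $\operatorname{int}(U^\cm)$ continuity alone suffices, so the entire difficulty is concentrated on $\partial U$, and this is exactly where the continuous boundary hypothesis \eqref{it:tuadmitiii} is indispensable — it guarantees both that $\partial U$ is Lebesgue null (so the a.e.\ vanishing really pins down $\tilde f$ on $\operatorname{int}(U^\cm)$) and that each boundary point is approached from $\operatorname{int}(U^\cm)$ (so pointwise vanishing extends to $\partial U$). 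This is the same geometric regularity of $U$ that already played a crucial role in \lemref{lem:T3.7.2} for the proof of weak duality.
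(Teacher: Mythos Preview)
Your proof is correct and proceeds along a genuinely different compactness route than the paper. The paper works on the Fourier side: it shows that $\{u_j\}$ and $\{\ft f_j\}$ are bounded in $L^1$, extracts a \emph{vague} limit measure $\mu$, sets $f=\ft\mu$, and then must carry out a separate cutoff argument (splitting $\int(f_j-f)$ over a large ball $B$ and using $m(U\setminus B)<\eps$) to establish $\int f_j\to\int f$. You instead exploit that the $f_n$ are uniformly bounded and supported in the finite-measure set $U$, hence bounded in $L^2(\R^d)$, and use weak $L^2$ compactness on the physical side. This pays off nicely: both $\int f_n\to\int f$ and the pointwise convergence $\ft{f}_n(t)\to\ft f(t)$ come for free by testing against the explicit $L^2$ functions $\1_U$ and $\1_U\,e^{-2\pi i\langle t,\cdot\rangle}$, bypassing the tail estimate entirely; Fatou together with $\|(\ft f_n)_-\|_1\to 0$ then gives $\ft f\ge 0$ and $\ft f\in L^1$. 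Both arguments finish identically, upgrading to a continuous representative and invoking the continuous-boundary hypothesis to pass from $\tilde f=0$ on $\operatorname{int}(U^\cm)$ to $\tilde f=0$ on $\partial U$, then normalising and closing with weak duality.

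Two minor remarks: the null-measure of $\partial U$ is not actually needed for the step ``a.e.\ vanishing on $U^\cm$ $\Rightarrow$ pointwise vanishing on $\operatorname{int}(U^\cm)$'' (continuity on an open set suffices); and each $f_n\in X$ vanishes on $U^\cm$, so is supported in $U$ rather than $\overline U$, which is all you need for the $L^2$ bound $\|f_n\|_2^2\le \|f_n\|_\infty^2\, m(U)$.
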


\begin{proof}
In one direction this is obvious:
if \eqref{eq:T1.3} holds then there is a sequence  of
Tur\'{a}n admissible functions $f_j$ such that 
$\int f_j \to T'(U)^{-1}$. Then
$f_j \in X$, $f_j(0) = 1$, and $\ft{f}_j$ is a
nonnegative function in $L^1(\R^d)$, hence
taking $f = f_j$, $u = \ft{f}_j$,
and $a=b=0$
in \eqref{eq:T1.4} yields the triple
$(0, \int f_j, 1)$ belonging to $K$ 
whose limit is \eqref{eq:T1.5}.

To prove the converse direction, we
 suppose that $(\ft{f}_j - u_j, \int f_j - a_j, f_j(0) + b_j)$ 
is a sequence in $K$ converging to \eqref{eq:T1.5}, that is,
\begin{equation}
\label{eq:T1.6}
\|\ft{f}_j - u_j\|_1 \to 0, \quad 
{\textstyle \int}  f_j - a_j \to T'(U)^{-1}, 
\quad f_j(0)+b_j \to 1.
\end{equation}

Since $u_j$ is nonnegative,
we have 
\begin{equation}
\label{eq:T7.7.2}
\|u_j\|_1 = \tint u_j
= f_j(0) - \tint (\ft{f}_j - u_j) 
 \le f_j(0) + b_j + 
\|\ft{f}_j - u_j\|_1.
\end{equation}
It follows from \eqref{eq:T1.6} that  the right hand side 
of \eqref{eq:T7.7.2} tends to $1$ as $j \to \infty$,
hence
$\limsup \|u_j\|_1 \le 1$. In particular,
$\{u_j\}$ is a bounded sequence in $L^1(\R^d)$,
so by passing to a subsequence, we may assume that 
$u_j$ converges vaguely
 to some finite measure $\mu$,
which ought to be a positive measure since
 $u_j$ are nonnegative functions.
 
 In turn, we have 
$\|\ft{f}_j\|_1 \le \|u_j\|_1  + 
\|\ft{f}_j - u_j\|_1$,  and so 
$\limsup \|\ft{f}_j\|_1 \le 1$.
Hence, 
 $\{\ft{f}_j\}$ is a bounded sequence in $L^1(\R^d)$.
 Moreover, since we have
$\ft{f}_j - u_j \to 0$ in $L^1(\R^d)$ and therefore
also vaguely, the sequence $\{\ft{f}_j\}$ 
 converges vaguely to the same measure $\mu$.

If we now set $f = \ft{\mu}$, then
$f$ is a bounded continuous function.
It follows from the  vague convergence 
that $f_j \to f$ in the sense of tempered distributions.
Hence the function $f$ is real-valued and even.
Since each $f_j$ vanishes on $U^\cm$, then
   $f$ must be   supported in the closure of $U$,
or equivalently, $f$  vanishes in the interior of $U^\cm$.
Since $U$ has a  continuous boundary, 
 the closed set $U^\cm$ is 
equal to the closure of its interior, 
hence by continuity the function $f$ 
must in fact vanish in the whole set $U^\cm$.
Furthermore, 
\begin{equation}
\label{eq:T1.8.5.1}
\|f\|_\infty = f(0) = \tint d\mu \le 
\limsup \tint u_j \le 1,
\end{equation}
and as a consequence,   $\|f\|_1 = \int_U |f| \le m(U)$,
so $f \in L^1(\R^d)$. 
It follows  that $\mu = \ft{f}$
is actually a nonnegative continuous  function in $L^1(\R^d)$.

We now claim that $\int f_j \to \int f$. Indeed, given
 $\eps > 0$ we choose a large ball $B$ 
such that $m(U \setminus B) < \eps$, and let
$\pphi$ be a Schwartz function such that
$0 \le \pphi \le 1$, and $\pphi = 1$ on $B$.
Then
\begin{equation}
\label{eq:T1.8.5.2}
\int f_j -  \int f =
\int (f_j - f) \cdot \pphi + 
\int (f_j - f) \cdot (1 - \pphi).
\end{equation}
Since  $\limsup \|f_j\|_\infty \le 
\limsup \|\ft{f}_j\|_1 \le 1$,
and due to \eqref{eq:T1.8.5.1},
for all sufficiently large $j$ 
the function $(f_j - f) \cdot (1 - \pphi)$
is bounded in modulus by an absolute 
constant $C$, and it vanishes off
the set  $U \setminus B$. Hence
the second integral on the right hand side
of \eqref{eq:T1.8.5.2} is bounded
in modulus  by
$C \cdot m(U \setminus B) < C \eps$. The first
integral on the right hand side 
of \eqref{eq:T1.8.5.2} tends to zero as $j \to \infty$,
since  $f_j \to f$ in the sense of tempered distributions.
This implies that $| \int f_j - \int f | < C \eps$ for all 
sufficiently large $j$. As this holds for an arbitrarily
small $\eps$, this shows that $\int f_j \to \int f$
and establishes our claim.

Since the scalars $a_j$ are nonnegative, 
we conclude   from \eqref{eq:T1.6} that

\begin{equation}
\label{eq:T1.7}
{\textstyle \int}  f =
 \lim_{j \to \infty} {\textstyle \int}  f_j  \ge \lim_{j \to \infty} 
( {\textstyle \int}  f_j - a_j ) =  T'(U)^{-1}.
\end{equation}

At this point we note that we still do not know that
$f$ is  Tur\'{a}n  admissible,
since we have not shown that $f(0)=1$.
However, it  follows from \eqref{eq:T1.8.5.1}
that $0 \le f(0) \le 1$. Moreover,
since $f(0) = \|f\|_\infty$, the value
$f(0)$ must be strictly positive,  for otherwise
this would imply that
$f=0$ which contradicts \eqref{eq:T1.7}.

We have thus shown that  $0 < f(0) \le 1 $.
The function $f(0)^{-1} f$ 
 is therefore  Tur\'{a}n  admissible,
and has integral $f(0)^{-1} \int f
 \ge f(0)^{-1}  T'(U)^{-1}$ due to \eqref{eq:T1.7}. 
On the other hand, we have  $f(0)^{-1} \int f
\le T(U) \le  T'(U)^{-1}$
by the definition of the 
Tur\'{a}n constant  $T(U)$
and the inequality  \eqref{eq:T1.1}.
This implies that actually $f(0) = 1$ and 
thus $f$ is a  Tur\'{a}n  admissible function,
and 
$\int f = T(U) =  T'(U)^{-1}$. In particular,
the equality \eqref{eq:T1.3} holds.
This completes the proof of \lemref{lem:T1.1}.
\end{proof}

\subsubsection{}
\label{sec:LP2.1}
We now continue to the proof of the strong linear duality equality \eqref{eq:T1.3}.

\begin{proof}[Proof of \thmref{thm:T1.3}]
In view of \lemref{lem:T1.1}, 
in order to prove that  the equality \eqref{eq:T1.3} 
holds, it suffices to show
that the triple \eqref{eq:T1.5} must belong to the closure of $K$.
Suppose to the contrary that this is not the case. Since $K$ is convex, then
 by the Hahn-Banach separation theorem 
 (see e.g.\ \cite[Theorem 3.4]{Rud91})
there exists a continuous linear functional on the  
space   $L^1(\R^d) \times \R \times \R$ 
which separates  the closure of $K$ from the triple
\eqref{eq:T1.5}. This means that  there exists an element 
 $(g,p,q)$  of  the space $L^\infty(\R^d) \times \R \times \R$
  (again the functions in
the first component are taken real-valued),  and there is 
 a real scalar $c$, such that the inequality
\begin{equation}
\label{eq:T1.9}
{\textstyle \int} (\ft{f} - u) g + p ( {\textstyle \int} f  - a) - q ( f(0) + b)  \le c
\end{equation}
holds for every $f \in X$, every  nonnegative function $u \in L^1(\R^d)$, and
every nonnegative scalars $a, b$, while at the same time we have
\begin{equation}
\label{eq:T1.8}
 p T'(U)^{-1} - q > c.
\end{equation}
(The left hand side of \eqref{eq:T1.8}
is the action of $(g,p,q)$ on the triple
\eqref{eq:T1.5}.)

We first observe that the function 
$g$ must be nonnegative a.e. Indeed, if $g(t) < 0$
on some set $E$ of positive and finite
measure, then taking $f=0$, $a=b = 0$ and
$u  = \lam  \cdot \1_E$
would violate \eqref{eq:T1.9} for a sufficiently large positive scalar $\lam$.
Similarly, $p$ must be a nonnegative scalar, for otherwise
taking $f=0$, $u = 0$ and $b=0$ would violate \eqref{eq:T1.9} for a sufficiently large 
positive scalar $a$. In the same way  also
$q$ must be a nonnegative scalar. 
Thus $g$ is a nonnegative 
function in $L^\infty(\R^d)$,  and $p,q$ are nonnegative scalars.

If we now set $u = 0$ and $a=b=0$ in \eqref{eq:T1.9} then we obtain
\begin{equation}
\label{eq:T2.1}
{\textstyle \int} \ft{f}  g + p  {\textstyle \int} f   - q f(0)  \le c
\end{equation}
for every $f \in X$. Since $X$ is a linear space, 
it follows that the left hand side can never be nonzero
(for otherwise it could be made positive and arbitrarily large).
So we may assume that $c = 0$ and that the
inequality \eqref{eq:T2.1} is in fact an equality.

By replacing $g(t)$ with $\tfrac1{2} (g(t) + g(-t))$, we may
also assume that $g$ is a nonnegative and even function
in $L^\infty(\R^d)$.

If $f \in X$ is a smooth function with compact support
contained in $U$, 
then the  equality in  \eqref{eq:T2.1} may be written
in the distributional sense as
\begin{equation}
\label{eq:T2.2}
 ( \ft{g}  + p  - q  \del_0) (f) = c = 0.
\end{equation}
Moreover, the tempered distribution
$ \ft{g}  + p  - q \del_0$ is real and even, 
therefore the fact that the equality
\eqref{eq:T2.2} holds for all smooth functions
 $f \in X$  with compact support
contained in $U$,   implies that
$ \ft{g}  + p  - q \del_0$ vanishes in $U$.

Recall now that $p,q$ are nonnegative scalars. We claim
that in fact they are both strictly positive. 
Indeed, using \eqref{eq:T1.8} and recalling that  $c=0$,
it follows that $p>0$.  In turn, this implies that
  $g + p \del_0$ is a nonzero positive measure,
  whose Fourier transform satisfies
$ \ft{g}  + p = q \del_0$ in $U$. 
But since $ \ft{g} + p$ cannot vanish in any neighborhood
  of the origin, we conclude  that  also $q>0$.

Finally, define $\al = q^{-1} ( \ft{g}  + p )$. Then $\al = \del_0$ in $U$,
and $\ft{\al}$ is a positive measure. Hence $\al$ is a
tempered distribution admissible for the dual Tur\'{a}n problem.
Furthermore, the Fourier transform of $\al$ is given by
 $\ft{\al} = q^{-1}( p  \del_0 + g)$, so that
 the measure $\ft{\al}$ has an
 atom at the origin of mass $\ft{\al}(\{0\})
 = p/q$. However due to
\eqref{eq:T1.8} and recalling that  $c=0$,
we conclude that $\ft{\al}(\{0\}) > T'(U)$,
 which gives us the desired
contradiction.
\end{proof}

\subsubsection{Remark}
We note an interesting consequence of the last proof. Suppose that
we consider the following smaller class of admissible tempered distributions 
$\al$ on $\R^d$.
We again require that 
$\al = \del_0 + \be$, where $\be$ is a tempered distribution supported in
the closed set $U^\cm$, but in addition we require that $\ft{\al}$
is of the form  $\ft{\al} = \ft{\al}(\{0\}) \del_0 + g$, where $g$
is a nonnegative even function in $L^\infty(\R^d)$. Then
the supremum of $\ft{\al}(\{0\}) $ over this smaller class of admissible 
$\al$'s still gives us the dual Tur\'{a}n constant $T'(U)$.

\subsection{Existence of extremizers for the Tur\'{a}n problem and its dual}
\label{ss:existence-turan}
We say that a Tur\'{a}n admissible function $f$
is extremal if it satisfies $\int f = T(U)$.
Similarly,  a tempered distribution  $\al$ which
is admissible for the dual Tur\'{a}n problem
will be called extremal
if we have  $ \ft{\al} (\{0\})  = T'(U)$.

Our next goal is to establish the existence of
extremizers for both the Tur\'{a}n problem and
its dual. This implies that in the
definition of the constants $T(U)$ and $T'(U)$,
the supremum is in fact a maximum.

The existence of an extremal function for the Tur\'{a}n 
problem was proved in \cite[Corollary 19]{BRR24}
in the more general context of locally compact
abelian groups.

\begin{thm}
\label{thm:T8.1}
Let $U \sbt \R^d$ be an open set satisfying 
\eqref{it:tuadmiti}, \eqref{it:tuadmitii}, \eqref{it:tuadmitiii}.
Then,
\begin{enumerate-roman}
\item \label{it:tex.1}
The Tur\'{a}n problem admits at least
one extremal function $f$;
\item \label{it:tex.2}
The dual Tur\'{a}n problem admits at least
one extremal  tempered distribution $\al$.
\end{enumerate-roman}
\end{thm}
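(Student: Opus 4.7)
Part (i) requires nothing beyond the machinery already in place: strong duality (\thmref{thm:T1.3}) places the triple $(0, T'(U)^{-1}, 1)$ in the closure of the cone $K$ from \eqref{eq:T1.4}, and the proof of \lemref{lem:T1.1} constructs a Tur\'{a}n admissible function $f$ with $\int f = T(U)$ from any sequence in $K$ realizing this approximation. So the work is entirely in part (ii), which I would prove by a vague-compactness argument on a maximizing sequence of dual distributions.

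Take admissible $\alpha_j$ with $\ft{\alpha}_j(\{0\}) \to T'(U)$. Fix an open ball $V$ around the origin that is contained in $U$; each $\alpha_j$ equals $\delta_0$ on $V$, so \lemref{lem:T6.9.1} supplies a bound $\ft{\alpha}_j(B+t) \le C(V)$ uniform in $j$ and $t$. Thus $\{\ft{\alpha}_j\}$ is uniformly translation-bounded, and after extraction we may assume $\ft{\alpha}_j \to \mu$ vaguely, where $\mu$ is a positive translation-bounded measure. Uniform translation-boundedness together with the rapid decay of Schwartz functions upgrades this vague convergence to convergence in $\S'(\R^d)$, so by Fourier duality $\alpha_j \to \alpha$ in $\S'(\R^d)$, where $\alpha$ is the tempered distribution with $\ft{\alpha} = \mu$. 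Since $\alpha_j - \delta_0$ is supported in the closed set $U^\cm$ for every $j$, the same holds for $\alpha - \delta_0$; hence $\alpha$ is admissible for the dual Tur\'{a}n problem.

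It remains to show $\mu(\{0\}) = T'(U)$. The inequality $\mu(\{0\}) \le T'(U)$ is weak duality (\thmref{thm:T1.1}). For the reverse direction, fix a smooth nonnegative compactly supported function $\chi$ with $\chi(0) = 1$ and set $\varphi_r(x) = \chi(x/r)$. Since $\varphi_r \ge 0$ and $\ft{\alpha}_j$ is a positive measure with an atom of mass $\ft{\alpha}_j(\{0\})$ at the origin,
\[
\int \varphi_r \, d\ft{\alpha}_j \,\ge\, \varphi_r(0)\,\ft{\alpha}_j(\{0\}) \,=\, \ft{\alpha}_j(\{0\}).
\]
As $\varphi_r$ is continuous and compactly supported, vague convergence gives $\int \varphi_r \, d\mu \ge T'(U)$ for every $r > 0$. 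Letting $r \to 0$, dominated convergence (for $r \le 1$, $\varphi_r$ is supported in a fixed small ball $B$ on which $\mu$ has finite total mass, and is dominated there by $\|\chi\|_\infty \cdot \1_B$) yields $\int \varphi_r \, d\mu \to \mu(\{0\})$. Hence $\mu(\{0\}) \ge T'(U)$, and $\alpha$ is the desired dual extremizer.

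The main obstacle is precisely this final step: a vague limit of positive measures can in principle lose mass from a single point (mass may escape to nonzero points or to infinity), and it is the combination of positivity of $\ft{\alpha}_j$ with the uniform local control near $0$ from \lemref{lem:T6.9.1} that rules out any such loss.
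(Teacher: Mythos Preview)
Your proof is correct and follows the paper's argument essentially line for line; you simply make explicit (via the test functions $\varphi_r$) the lower semicontinuity of the point mass at the origin under vague limits of positive measures, which the paper asserts without detail. One small quibble: the inequality $\mu(\{0\}) \le T'(U)$ follows directly from the admissibility of $\alpha$ and the definition of $T'(U)$ as a supremum, not from weak duality.
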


\begin{proof}
Since the equality \eqref{eq:T1.3}
of \thmref{thm:T1.3} is now
proved, then part \ref{it:tex.1} follows
 as a consequence of
\lemref{lem:T1.1}.
It thus remains to prove part \ref{it:tex.2}.

Let  $\{\al_j\}$ be a sequence  of
tempered distributions  which are 
admissible for the dual
Tur\'{a}n problem, and such that
$\ft{\al}_j( \{0 \})  \to T'(U)$.
It follows from \lemref{lem:T6.9.1}
that $\{\ft{\al}_j\}$ is a 
uniformly translation-bounded
 sequence of positive measures.
 As a consequence, by passing to a subsequence 
 we may assume that $\ft{\al}_j$ converges
 vaguely  to some translation-bounded positive
 measure, which we may denote as $\ft{\al}$
 for some tempered distribution  $\al$.
 The  uniform   translation-boundedness and the
 vague convergence imply that the sequence
 $\ft{\al}_j$ converges to $\ft{\al}$ also
in the sense of  tempered distributions.
As a consequence, since $\al_j = \del_0$ in $U$ for all $j$, then also
 $\al = \del_0$ in $U$, hence  $\al$ is a
 tempered distribution admissible 
for the dual Tur\'{a}n problem.
Moreover,  since the measure $\ft{\al}_j$ 
is  positive and has mass 
$\ft{\al}_j( \{0 \}) $
at the origin, then the vague limit $\ft{\al}$
also has an atom at the origin, of mass at least 
$\lim \ft{\al}_j( \{0 \})  = T'(U)$.  Hence
$\al$ is extremal for the 
dual Tur\'{a}n problem.
\end{proof}

\subsection{Relation between extremizers
 for the Tur\'{a}n problem and its dual}
\label{ss:properties-turan}
Let $f$ and $\al$ be 
extremizers for the Tur\'{a}n problem and 
its dual, respectively. 
This means that
 $f$ is a Tur\'{a}n admissible function,  with $\int f = T(U)$,
 and  that $\al$ is a tempered distribution admissible 
 for the dual Tur\'{a}n problem,
 with $\ft{\al}(\{0\}) = T'(U)$.
Note that $\ft{f}$ is a continuous function 
and   $\ft{\al}$ is a measure, so the product
$\ft{f} \cdot \ft{\al}$  is well-defined.

\begin{thm}
\label{thm:T3.7}
Let $U \sbt \R^d$ be an open set satisfying 
\eqref{it:tuadmiti}, \eqref{it:tuadmitii}, \eqref{it:tuadmitiii}.
If $f$ and $\al$ are any two extremals
for the Tur\'{a}n problem and
 its dual, respectively, then the  
measure $\ft{\al}$ is supported on the closed set
 $\{t : \ft{f}(t)=0\} \cup \{0\}$,
  and as  a consequence,  
$\ft{f} \cdot \ft{\al} = \del_0$.
\end{thm}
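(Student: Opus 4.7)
The strategy is to upgrade the almost-everywhere identity from \lemref{lem:T3.1} to a pointwise identity at the origin, from which the support of $\ft{\al}$ can be pinned down. Write the positive translation-bounded measure $\ft{\al}$ as
\[
\ft{\al} = T'(U)\del_0 + \mu,
\]
where $\mu$ is a nonnegative translation-bounded measure; by \lemref{lem:T3.1}, $T'(U)\ft{f} + \ft{f}\ast\mu = 1$ a.e.\ on $\R^d$. The target is to prove $\int \ft{f}\,d\mu = 0$: since $\ft{f}\ge 0$ is continuous and $\mu\ge 0$, this immediately forces $\supp\mu\sbt\{\ft{f}=0\}$, and therefore $\ft{\al}$ is supported on $\{\ft{f}=0\}\cup\{0\}$, as claimed. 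The conclusion $\ft{f}\cdot\ft{\al}=\del_0$ then follows by splitting the product: the atom at $0$ contributes $\ft{f}(0)\,T'(U)\,\del_0 = T(U)T'(U)\,\del_0 = \del_0$ (using \thmref{thm:T1.3}), and $\ft{f}\cdot\mu = 0$ since $\ft{f}$ vanishes on $\supp\mu$.

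To extract the pointwise information at $0$, I would regularize with a smooth even nonnegative bump $\chi_\eps(x)=\eps^{-d}\chi(x/\eps)$, $\int \chi=1$. Convolving the a.e.\ identity with $\chi_\eps$ yields $1\ast\chi_\eps = 1$ everywhere; by associativity of convolution (valid here because $\ft{\al}\ast\chi_\eps$ is a bounded smooth function and $\ft{f}\in L^1$), we obtain the everywhere identity $\ft{f}\ast(\ft{\al}\ast\chi_\eps)=1$. Evaluating at $0$ and using the evenness of $\ft{f}$ and $\chi_\eps$ together with Fubini's theorem (the integrability needed is controlled because $\ft{\al}$ is translation-bounded and $\ft{f}\in L^1$), this identity rewrites as
\[
T'(U)\,(\ft{f}\ast\chi_\eps)(0) + \int (\ft{f}\ast\chi_\eps)(z)\,d\mu(z) = 1.
\]

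Letting $\eps\to 0$, continuity of $\ft{f}$ gives $(\ft{f}\ast\chi_\eps)(0)\to \ft{f}(0)=\int f = T(U)$, so the first term tends to $T(U)T'(U)=1$ by \thmref{thm:T1.3}. Hence the second term tends to $0$. Since $\ft{f}\ast\chi_\eps \ge 0$ and $\ft{f}\ast\chi_\eps\to\ft{f}$ pointwise, Fatou's lemma yields $\int\ft{f}\,d\mu \le 0$, and nonnegativity of both factors forces $\int\ft{f}\,d\mu=0$, which completes the argument. The main technical hurdle I expect is the bookkeeping for the associativity/Fubini step that converts the a.e.\ identity into a genuine pointwise identity at the origin; once that is cleanly set up, the rest is a Fatou-plus-duality argument that mirrors the finite group computation \eqref{eq:F.2.1} almost verbatim.
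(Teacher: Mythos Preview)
Your argument is correct. Both your proof and the paper's start from the same two ingredients---the a.e.\ identity $T'(U)\ft{f}+\ft{f}\ast\mu=1$ from \lemref{lem:T3.1} and the strong duality $T(U)T'(U)=1$ from \thmref{thm:T1.3}---and both aim to extract pointwise information at the origin. The technical device differs: the paper argues by contradiction, replacing $\ft{f}$ by a smooth compactly supported minorant $0\le\psi\le\ft{f}$ that is positive near a hypothetical point $a\in\supp\mu$ with $\ft{f}(a)>0$, so that $\psi\ast\mu$ is continuous and strictly positive at $0$, forcing the continuous function $T'(U)\ft{f}+\psi\ast\mu$ to exceed $1$ at the origin. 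You instead mollify the whole identity by $\chi_\eps$, evaluate at $0$, and pass to the limit with Fatou to get $\int\ft{f}\,d\mu=0$ directly. Your route is arguably a shade cleaner (no contradiction, and it yields the global statement $\int\ft{f}\,d\mu=0$ in one stroke), while the paper's localized minorant avoids any limiting argument and works entirely with continuous functions. Either way the substance is the same.
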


\begin{proof}
We can write
  $\ft{\al} = T'(U) \del_0 + \mu$,
where $\mu$ is a positive measure.
We claim that
$\mu$ is supported on the set
$\{t : \ft{f}(t)=0\}$.  
Suppose to the contrary that
 this is not the case, then
 there is a point $a$ in
 the closed support of $\mu$ such that $\ft{f}(a)>0$.
Let $\psi$ be a smooth function 
with compact support, satisfying
$0 \le \psi \le \ft{f}$  and such that
$\psi(t) > 0$ in some open neighborhood $V$ of  
the point $a$. 
Since $\ft{f}$ is an even function, we may assume
that $\psi$ is even as well. 
Hence $\psi \ast \mu$ is 
a  nonnegative continuous function 
with
$(\psi \ast \mu)(0)  = \int \psi(t) d\mu(t) > 0$,
where the strict inequality holds since the positive
measure $\mu$ has nonzero mass in $V$ while
$\psi(t) > 0$ for $t \in V$.
By \lemref{lem:T3.1}, we have
\begin{equation}
\label{eq:T5.1}
1 = \ft{f} \ast \ft{\al} = T'(U) \ft{f} + \ft{f} \ast \mu \quad \text{a.e.}
\end{equation}
The right hand side of \eqref{eq:T5.1} is therefore
a.e.\ not less than
$ T'(U) \ft{f} +   \psi \ast  \mu$,
which is a  nonnegative continuous 
function whose value at the origin is 
 strictly  greater than $T'(U) \ft{f}(0) = T'(U) T(U) = 1$,
which gives us a contradiction. This shows that the measure
$\mu$ must indeed be supported on the set
$\{t : \ft{f}(t)=0\}$. In turn, this implies that
 $\ft{\al}$ is supported on the set
 $\{t : \ft{f}(t)=0\} \cup \{0\}$
and that $\ft{f} \cdot \ft{\al} = \del_0$.
\end{proof}

\subsubsection*{Remark}
Note that we \emph{do not} say that $f \ast \al = 1$, since 
generally if  $f$ is a continuous function and 
$\al$ is a tempered distribution, then
the convolution $f \ast \al$ \emph{does not} make sense.


\section{The Delsarte problem and its dual}
\label{s:delsarte}

\subsection{Admissible domains for the Delsarte problem}

We now consider a wider class of domains.
We fix an open set  $U \sbt \R^d$ satisfying the following 
two properties:
\begin{enumerate-num}
\item
$U$ is an open set of finite measure;
\hfill (\refstepcounter{equation}\label{it:dsadmiti}\theequation)
\item
$0 \in U = -U$, that is, $U$ is  origin-symmetric 
and contains  the origin;
\hfill (\refstepcounter{equation}\label{it:dsadmitii}\theequation)
\end{enumerate-num}
Later on, we will also assume that:
\begin{enumerate-num}
\setcounter{enumi}{2}
\item
The closed set $U^\cm$ is equal to the closure of its interior.
\hfill (\refstepcounter{equation}\label{it:dsadmitiii}\theequation)
\end{enumerate-num}

The first two conditions \eqref{it:dsadmiti}, \eqref{it:dsadmitii}
coincide with \eqref{it:tuadmiti}, \eqref{it:tuadmitii}.
The third condition  \eqref{it:dsadmitiii} will not be assumed from
the beginning, since it is not needed in the proof of the
weak linear duality. We will impose the condition 
\eqref{it:dsadmitiii} later on, when we establish the strong linear duality
and the results which follow it.

The condition  \eqref{it:dsadmitiii} 
is a weaker requirement than \eqref{it:tuadmitiii}.
Hence, we consider a more general class of  admissible domains
for the Delsarte problem. 
The  condition  \eqref{it:dsadmitiii} is also considered in
\cite{BRR24} where it is called ``boundary coherence''.

Note again, that the set $U$ may be unbounded, disconnected, or both.

\subsection{The Delsarte constant}
\label{ss:delsarte}

We  say that a function  $f$
on $\R^d$ is \emph{Delsarte admissible}  if $f$ is a continuous 
real-valued function  in $L^1(\R^d)$ satisfying 
the conditions
$f(0)=1$, $f(t) \le 0$ for $t \in U^\cm$,
and   $\ft{f}$ is a nonnegative  function.

If $f$  is Delsarte admissible
then both $f$ and $\ft{f}$ are even functions,
and   $\ft{f}$ is a continuous function 
belonging to  $L^1(\R^d)$. Moreover,
we have
$\|f\|_\infty = \int \ft{f} = f(0)=1$.

\begin{definition}
The \emph{Delsarte constant} 
 $D(U)$ is the supremum
of $\int f$ over all  the Delsarte
admissible functions $f$.
\end{definition}

The Delsarte constant $D(U)$ is finite,
and satisfies $D(U) \le m(U)$.

We observe that $D(U) \ge T(U)$, that is, the Delsarte 
constant is at least as large as the Tur\'{a}n constant,
since the supremum is taken over a larger class of 
admissible functions. Indeed, 
in the Tur\'{a}n problem we require that $f$
vanishes on $U^\cm$, 
while in the Delsarte problem $f$ is only
required to be nonpositive in $U^\cm$. 

In particular (recall \secref{ss:diffandconvex})
 this implies that 
if $U$ contains a difference set $A-A$,
where $A \sbt \R^d$
is a  bounded open set, then $D(U) \ge m(A)$.

\subsection{The dual Delsarte constant}
\label{ss:dual-delsarte}

We say that a tempered distribution  $\al$ on $\R^d$ 
is admissible for the dual Delsarte problem,
if $\al = \del_0 + \be$, where $\be$ is a
positive measure supported in the closed set $U^\cm$,
 and   $\al$ is positive definite,
which means that $\ft{\al}$ is a positive measure.
In this case, 
we may as before write $\ft{\al} =   \ft{\al} (\{0\})  \del_0 + \mu$,
where $\ft{\al} (\{0\}) $ is the mass of the
atom at the origin, and $\mu$ is a positive measure.

\begin{definition}
The \emph{dual Delsarte constant} $D'(U)$ is the supremum
of $ \ft{\al} (\{0\})$ over all  the
tempered distributions $\al$
which are admissible for the dual Delsarte problem.
\end{definition}

Similarly, we note that $D'(U) \le T'(U)$, since 
the supremum is taken over a smaller class of admissible 
tempered distributions $\al$. Indeed, in the dual Delsarte problem
we require $\be$ to be a positive measure, while 
in the dual Tur\'{a}n problem, 
 $\be$ is merely a tempered distribution 
 which need not be a measure.

We observe that
the tempered distribution $\al$ given by
\eqref{eq:T.1.8.3}
is admissible for the dual Delsarte problem,
hence
$D'(U) \ge m(U)^{-1}$.
In particular, the dual Delsarte
constant is strictly positive.

It follows from
\lemref{lem:T6.9.1} that
if $\al$ is a tempered distribution admissible 
for the dual Delsarte problem, then 
 $\ft{\al}$ is a translation-bounded measure.

\subsection{Weak linear duality in the Delsarte problem}
\label{sec:D1.3}\label{ss:weak-delsarte}
We now turn to  prove  the inequality that establishes
the weak linear duality in the Delsarte problem.
Note that this result does not require
the condition  \eqref{it:dsadmitiii}.

\begin{thm}
\label{thm:D1.1}
Let $U \sbt \R^d$ be an open set 
satisfying \eqref{it:dsadmiti}, \eqref{it:dsadmitii}. Then 
\begin{equation}
\label{eq:D1.1}
D(U) D'(U) \le 1.
\end{equation}
\end{thm}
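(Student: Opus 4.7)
The plan is to mimic the Tur\'an weak-duality argument of \thmref{thm:T1.1}, with the strict vanishing condition $f = 0$ on $U^\cm$ relaxed to the sign condition $f \le 0$ on $U^\cm$. Fix a Delsarte admissible function $f$ and a dual Delsarte admissible tempered distribution $\al = \del_0 + \be$; the goal is to show $\ft{f}(0)\cdot \ft{\al}(\{0\}) \le 1$, from which \eqref{eq:D1.1} follows by taking suprema.

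Structurally, the argument rests on the Parseval identity $\int f\,d\al = \int \ft{f}\,d\ft{\al}$, sandwiched between two one-sided estimates. On the primal side,
\[
\tint f\,d\al \;=\; f(0) + \tint_{U^\cm} f\,d\be \;\le\; 1,
\]
since $f(0)=1$, $f \le 0$ on $U^\cm$, and $\be$ is positive. On the Fourier side, $\ft{\al}$ is a positive measure with an atom of mass $\ft{\al}(\{0\})$ at the origin and $\ft{f} \ge 0$, so
\[
\tint \ft{f}\,d\ft{\al} \;\ge\; \ft{f}(0)\,\ft{\al}(\{0\}) \;=\; \bigl(\tint f\bigr)\,\ft{\al}(\{0\}).
\]
Chaining these through Parseval delivers the desired bound.

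The main obstacle is Parseval itself, since $\be$ may have infinite total mass and $\ft{\al}$ is merely translation-bounded by \lemref{lem:T6.9.1}, so neither $f$ nor $\ft{f}$ is a priori integrable against the relevant measure. I would resolve this by a two-step regularization. First, fix a smooth even bump $\pphi \in C_c^\infty(\R^d)$ with $\pphi(0)=1$, $0 \le \pphi \le 1$, and $\ft{\pphi} \ge 0$ (take $\pphi = c\,\psi \ast \psi$ with $\psi \in C_c^\infty$ even), and set $\pphi_R(x):=\pphi(x/R)$. Then $f_R := f \cdot \pphi_R$ is again Delsarte admissible and has compact support, so the primal bound $\int f_R\,d\al \le 1$ holds with both sides finite. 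Second, mollify by a Gaussian $\rho_\eps$ with $\ft{\rho_\eps}(t) = e^{-\pi\eps|t|^2}$, producing the Schwartz function $f_R^\eps := f_R \ast \rho_\eps$. For Schwartz functions, Parseval against the Fourier-transformable positive measure $\al$ is standard: $\int f_R^\eps\,d\al = \int \ft{f_R^\eps}\,d\ft{\al}$.

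It remains to pass limits. On the $\al$-side, $f_R^\eps \to f_R$ uniformly and with Gaussian decay outside $\supp f_R$ (uniform for $\eps \le 1$), yielding a common dominant in $L^1(\al)$, so $\int f_R^\eps\,d\al \to \int f_R\,d\al$ by dominated convergence. On the $\ft{\al}$-side, $\ft{f_R^\eps} = \ft{f_R}\cdot \ft{\rho_\eps}$ with $\ft{\rho_\eps}(t) = e^{-\pi\eps|t|^2} \nearrow 1$ and $\ft{f_R} \ge 0$, so monotone convergence gives $\int \ft{f_R^\eps}\,d\ft{\al} \nearrow \int \ft{f_R}\,d\ft{\al}$. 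Thus Parseval holds for $f_R$ and $\int \ft{f_R}\,d\ft{\al} \le 1$. Finally, $\ft{f_R} = \ft{f} \ast \ft{\pphi_R}$ with $\ft{\pphi_R} \ge 0$ a positive approximate identity of total mass $\pphi_R(0) = 1$, hence $\ft{f_R} \to \ft{f}$ pointwise by continuity of $\ft{f}$, and Fatou's lemma gives $\int \ft{f}\,d\ft{\al} \le \liminf_{R \to \infty} \int \ft{f_R}\,d\ft{\al} \le 1$, completing the argument.
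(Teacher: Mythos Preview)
Your argument is correct and follows the same underlying strategy as the paper---sandwich the Parseval identity $\int f\,d\al = \int \ft f\,d\ft\al$ between the primal-side bound $\int f\,d\al \le 1$ (from $f \le 0$ on $U^\cm$ and $\be \ge 0$) and the Fourier-side bound $\int \ft f\,d\ft\al \ge \ft f(0)\,\ft\al(\{0\})$ (from $\ft f \ge 0$ and $\ft\al \ge 0$).  The difference lies entirely in how the regularization is carried out.

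The paper does not regularize $f$ at all.  Instead it observes via \lemref{lemB1} that the Fourier transform of the signed measure $f\cdot\al$ is the locally integrable function $\ft f \ast \ft\al$, and then tests this identity against a single approximate identity $\pphi_\eps$ on the Fourier side (with $\ft\pphi_\eps$ compactly supported on the primal side).  The two one-sided estimates fall out immediately by letting $\eps\to 0$.  Your approach regularizes $f$ in two steps (compact truncation $f_R$, then Gaussian mollification $f_R^\eps$), invokes Parseval for Schwartz functions, and passes the limits via dominated/monotone convergence and Fatou.  This trades the appeal to \lemref{lemB1} for a more hands-on but entirely elementary argument; the cost is a two-parameter limit and the (easy but unspoken) fact that a positive tempered measure such as $\al$ integrates Gaussians, which is what makes your dominated-convergence step go through.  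The paper's route is shorter; yours is more self-contained.
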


In particular, this shows that the dual Delsarte 
constant $D'(U)$ is finite.

Moreover, the inequality \eqref{eq:D1.1} gives us
as before, that any 
tempered distribution  $\al$ 
admissible
 for the dual Delsarte problem yields the upper
 bound 
 $D(U) \le \ft{\al}(\{0\})^{-1}$.

\begin{proof}[Proof of \thmref{thm:D1.1}]
Let $f$ be any Delsarte admissible function,
and let $\al$ be any tempered distribution admissible 
for the dual Delsarte problem.
Note that although $f$ is not necessarily a smooth function,
the product $f \cdot \al$ is a well-defined signed measure,
since $f$ is a continuous function and $\al$ is a measure.
Moreover, since $\ft{f}$ is a function in $L^1(\R^d)$ and 
  $\ft{\al}$ is  a translation-bounded
  measure, then by \lemref{lemB1}
  the signed measure
$f \cdot \al$ is a tempered distribution whose
Fourier transform is $\ft{f} \ast \ft{\al}$,
which  is a well-defined, positive translation-bounded measure, 
and which is also 
a locally integrable function. 

Fix a nonnegative Schwartz function  $\pphi$ with
$\int \pphi = 1$, such that $\ft{\pphi}$ is nonnegative
and has compact support.
Let $\pphi_\eps(t) = \eps^{-d} \pphi(t/\eps)$,
then 
$\ft{\pphi}_\eps(x)  = \ft{\pphi}(\eps x)$.
The Fourier transform $\ft{\al}$ is of the form
$\ft{\al} =   \ft{\al} (\{0\})  \del_0 + \mu$,
where $\mu$ is a positive measure, hence
\begin{equation}
\label{eq:D3.1}
\ft{f} \ast \ft{\al} 
= \ft{\al}(\{0\}) \ft{f} + \ft{f} \ast  \mu.
\end{equation}
Since $\pphi_\eps$ is nonnegative, 
and $\ft{f} \ast  \mu $ is a positive measure, 
this implies that 
\begin{equation}
\label{eq:D3.2}
(\ft{f} \ast \ft{\al} )(\pphi_\eps ) 
 \ge \ft{\al}(\{0\}) \int  \ft{f}(t) \pphi_\eps(t)dt   \to
 \ft{\al}(\{0\}) \ft{f}(0) = \ft{\al}(\{0\}) \int f
\end{equation}
as $\eps \to 0$. On the other hand,
$\al = \del_0 + \be$ where $\be$ is a
positive measure, hence
\begin{equation}
\label{eq:D3.3}
(\ft{f} \ast \ft{\al} )(\pphi_\eps ) =
(f \cdot \al)(\ft{\pphi}_\eps ) =
f(0)\ft{\pphi}_\eps (0) + \int \ft{\pphi}_\eps(x) f(x) d \beta(x)
\end{equation}
(the last equality holds and the  integral is well-defined since 
$\ft{\pphi}_\eps$ has compact support).
Since $\be$ is a positive measure supported on
$U^\cm$, $f$ is nonpositive on $U^\cm$ and
$\ft{\pphi}_\eps$ is nonnegative everywhere, it follows
that the integral in \eqref{eq:D3.3} is a nonpositive scalar.
Hence
\begin{equation}
\label{eq:D3.5}
(\ft{f} \ast \ft{\al} )(\pphi_\eps ) \le  f(0)\ft{\pphi}_\eps (0) = 1.
\end{equation}
Combining  \eqref{eq:D3.2}, \eqref{eq:D3.5}
 yields the inequality $ \ft{\al}(\{0\}) \int f \le 1$,
which proves \eqref{eq:D1.1}.
\end{proof}

\subsection{Strong linear duality in the Delsarte problem}
\label{ss:strong-delsarte}
Next we show that in fact we have an equality
in \eqref{eq:D1.1}, which establishes 
strong linear duality.

\begin{thm}
\label{thm:D1.3}
Let $U \sbt \R^d$ be an open set 
satisfying \eqref{it:dsadmiti}, \eqref{it:dsadmitii}, \eqref{it:dsadmitiii}. Then 
\begin{equation}
\label{eq:D1.3}
D(U) D'(U) = 1.
\end{equation}
\end{thm}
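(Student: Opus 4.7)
My plan is to adapt the Hahn-Banach separation approach used for the Tur\'an strong duality in \thmref{thm:T1.3}. The essential modification is that the weaker primal constraint $f \le 0$ on $U^{\cm}$ (replacing $f = 0$ on $U^{\cm}$) corresponds by duality to requiring $\be$ in the decomposition $\al = \del_0 + \be$ to be a positive \emph{measure} supported on $U^{\cm}$, rather than merely a tempered distribution. This difference will be reflected in the cone we separate from.

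Let $Y$ denote the real linear space of bounded continuous real-valued even functions $f$ on $\RR^d$ with $\ft f \in L^1(\RR^d)$; any such $f$ automatically lies in $C_0(\RR^d)$. Inside the Banach space $L^1(\RR^d) \times C_0(U^{\cm}) \times \RR \times \RR$ I would consider the convex cone $K$ consisting of all quadruples
\[
\bigl(\ft f - u,\; -f|_{U^{\cm}} - v,\; \tint f - a,\; f(0) + b\bigr),
\]
where $f \in Y$, $u \in L^1(\RR^d)$ is nonnegative, $v \in C_0(U^{\cm})$ is nonnegative, and $a,b \ge 0$. The analog of \lemref{lem:T1.1} should state that $D(U)D'(U) = 1$ together with existence of a Delsarte extremizer holds if and only if the target $(0, 0, D'(U)^{-1}, 1)$ lies in $\overline{K}$. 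For the nontrivial direction, from a sequence approaching the target one extracts a subsequential vague limit $\mu$ of $\{\ft f_j\}$, sets $f := \ft \mu$, and verifies Delsarte admissibility of $f$. Convergence of the second component together with $v_j \ge 0$ gives $f \le 0$ on the interior of $U^{\cm}$, and condition \eqref{it:dsadmitiii} is used precisely to extend this by continuity to all of $U^{\cm}$.

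To prove the target lies in $\overline K$ I argue by contradiction: otherwise the Hahn-Banach separation theorem produces $(g, \nu, p, q) \in L^\infty(\RR^d) \times M(U^{\cm}) \times \RR \times \RR$ and a scalar $c$ such that
\[
\tint (\ft f - u)\, g\, dx + \tint_{U^{\cm}} (-f - v)\, d\nu + p\bigl(\tint f - a\bigr) - q\bigl(f(0) + b\bigr) \le c
\]
for every admissible quadruple, while at the same time $p\, D'(U)^{-1} - q > c$. Specializing $f = 0$ with various choices of $u, v, a, b$ forces $g \ge 0$ a.e., $\nu$ a positive measure, and $p, q \ge 0$; since $Y$ is a linear space we then obtain $c = 0$. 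After symmetrizing $g$ and $\nu$ (legitimate because $U^{\cm} = -U^{\cm}$), the remaining identity reads distributionally as $\ft g + p - q\del_0 = \nu$. Mirroring the Tur\'an argument, both $p$ and $q$ must be strictly positive. Setting $\al := \del_0 + q^{-1}\nu$, one checks that $\al$ is dual Delsarte admissible (since $q^{-1}\nu$ is a positive measure supported on $U^{\cm}$), computes $\ft\al = q^{-1}(g + p\del_0)$, and reads off $\ft\al(\{0\}) = p/q > D'(U)$ from the separation inequality, contradicting the definition of $D'(U)$.

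The main obstacle I anticipate is in the converse direction of the extremizer lemma: the verification that the vague limit $f = \ft \mu$ is Delsarte admissible is more delicate than in the Tur\'an case because $f$ need not be supported inside $U$. One must justify $\int f_j \to \int f$ carefully (using $m(U) < \infty$ to control the contribution from $U^{\cm}$), together with $\|f\|_\infty = f(0) \le 1$ and the sign condition $f \le 0$ on $U^{\cm}$. The last step is where condition \eqref{it:dsadmitiii} is essential, ensuring that the nonpositivity of $f$ on the interior of $U^{\cm}$ propagates by continuity to the whole of $U^{\cm}$.
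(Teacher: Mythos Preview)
Your overall architecture matches the paper's, but two choices create genuine difficulties. First, your space $Y$ is underspecified: requiring only $\ft f\in L^1$ does not force $f\in L^1$ (take $f=\operatorname{sinc}$), so the coordinate $\int f$ is not even defined on all of $Y$. The paper takes $Y$ to consist of real even continuous $f$ with \emph{both} $f$ and $\ft f$ in $L^1(\R^d)$.

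Second, and more seriously, you place the second coordinate in $C_0(U^\cm)$ while the paper uses $L^1(U^\cm)$. Your Hahn--Banach direction works fine with $C_0$ (the dual gives a finite positive measure $\nu$ on $U^\cm$, and $\al=\del_0+q^{-1}\nu$ is dual Delsarte admissible just as you say). The problem is in the converse direction of your lemma. To extract an extremizer from a sequence approaching the target, one must show that $\{f_j\}$ is bounded in $L^1(\R^d)$; otherwise there is no reason the vague limit $\mu$ of $\{\ft f_j\}$ is a \emph{function} rather than a measure with atoms (so that $f:=\ft\mu$ need not lie in $L^1$). The paper obtains this $L^1$ bound from the decomposition
\[
\|f_j\|_1=\int(-f_j)+\int_U(f_j+|f_j|)+\int_{U^\cm}(f_j+|f_j|),
\]
where the third term is controlled by $2\int_{U^\cm}|f_j+v_j|\to 0$ precisely because the second coordinate converges in $L^1(U^\cm)$. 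With your $C_0$ topology you only know $\|f_j+v_j\|_{\infty,U^\cm}\to 0$, which gives no control over $\int_{U^\cm}|f_j+v_j|$ since $m(U^\cm)=\infty$; hence the $L^1$ bound on $\{f_j\}$ is unavailable. For the same reason your proposed step ``$\int f_j\to\int f$'' cannot be justified (and in fact the paper does not prove this; it establishes directly the inequality $\int f\ge D'(U)^{-1}$ via a cutoff argument that again relies on the $L^1(U^\cm)$ convergence, see equations \eqref{eq:D6.1}--\eqref{eq:D6.7.4}).

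In short: replace $C_0(U^\cm)$ by $L^1(U^\cm)$ (so the separating functional yields $h\in L^\infty(U^\cm)$ rather than a measure), and add $f\in L^1$ to the definition of $Y$. With these fixes your outline coincides with the paper's proof.
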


This result was proved under broader assumptions in \cite[Section 3]{CLS22}.
Here we give a different presentation following similar lines to our proof of the
strong linear duality for the Tur\'{a}n problem
(\thmref{thm:T1.3}).

\subsubsection{}
Let $Y$ be the linear space over $\R$
 consisting of all real-valued and even continuous 
functions $f$ such
that both $f$ and $\ft{f}$ are in $L^1(\R^d)$. Note that if $f \in Y$ then 
also $\ft{f}$ is a real-valued and even continuous 
function.

We now  consider $L^1(\R^d) \times L^1(U^\cm) \times \R \times \R$ 
as a Banach space  over $\R$ (the functions in
the first and second components are taken real-valued) and let $K$
be the set of all quadruples
\begin{equation}
\label{eq:D1.4}
(\ft{f} - u,  - f|_{U^\cm} - v, \tint f - a, f(0) + b)
\end{equation}
where $f \in Y$, $u$ is a nonnegative function in $L^1(\R^d)$, 
$v$ is a nonnegative function in $L^1(U^\cm)$, 
and $a,b$ are nonnegative scalars. Then the set $K$ is a convex cone.

\begin{lemma}
\label{lem:D1.1}
The equality \eqref{eq:D1.3} holds if and only if the quadruple
\begin{equation}
\label{eq:D1.5}
(0,0,D'(U)^{-1},1)
\end{equation}
belongs to the closure of $K$.
Moreover, if this is the case then
there exists a Delsarte  admissible function $f$ with
  $\int f = D(U)$.
\end{lemma}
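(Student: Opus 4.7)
The plan is to mirror the proof of \lemref{lem:T1.1}. The extra slot $-f|_{U^\cm}-v$ in \eqref{eq:D1.4} carries a nonnegative slack function $v$ encoding the relaxation from $f=0$ to $f\le 0$ on $U^\cm$. For the easy direction, if $D(U)D'(U)=1$, I would take a sequence $\{f_j\}$ of Delsarte admissible functions with $\int f_j\to D'(U)^{-1}$; each such $f_j$ lies in $Y$, so setting $u_j=\hat f_j\ge 0$, $v_j=-f_j|_{U^\cm}\ge 0$, and $a_j=b_j=0$ produces quadruples $(0,0,\int f_j,1)\in K$ converging to \eqref{eq:D1.5}.

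For the converse, suppose \eqref{eq:D1.5} lies in the closure of $K$, realized by sequences $f_j\in Y$, $u_j,v_j\ge 0$, and $a_j,b_j\ge 0$. Componentwise: $\|\hat f_j-u_j\|_1\to 0$, $\|f_j|_{U^\cm}+v_j\|_{L^1(U^\cm)}\to 0$, $\int f_j-a_j\to D'(U)^{-1}$, $f_j(0)+b_j\to 1$. Fourier inversion $\int\hat f_j=f_j(0)$ and $u_j\ge 0$ give $\limsup\|u_j\|_1\le 1$, hence $\{\hat f_j\}$ is bounded in $L^1$ and $\{f_j\}$ is uniformly bounded in $L^\infty$ by some constant $C$. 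After passing to a subsequence, $u_j$ (and therefore $\hat f_j$) converges vaguely to a finite positive measure $\mu$; set $f:=\hat\mu$, a continuous real-valued even positive definite function with $\|f\|_\infty=f(0)=\mu(\R^d)\le 1$, and $f_j\to f$ in $\S'(\R^d)$.

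Next I would verify that a normalization of $f$ is Delsarte admissible. To get $f\le 0$ on $U^\cm$: test the distributional convergence against a nonnegative smooth compactly supported $\psi$ with $\supp\psi\sbt\operatorname{int}(U^\cm)$, decompose $\int f_j\psi=-\int v_j\psi+\int(f_j+v_j)\psi$ (the first term $\le 0$, the second $o(1)$ by the $L^1$-closeness), pass to the limit to obtain $\int f\psi\le 0$, and extend to all of $U^\cm$ using continuity of $f$ together with \eqref{it:dsadmitiii}. To get $f\in L^1(\R^d)$: first bound $\|v_j\|_1$ uniformly by combining $\int f_j-a_j\to D'(U)^{-1}$ with the identity $\int f_j=\int_U f_j-\|v_j\|_1+o(1)$ and the trivial estimate $|\int_U f_j|\le Cm(U)$. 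Then extract a further weak-$*$ limit of $\{f_j\}$ in $L^\infty$ (which must coincide with $f$), and apply Fatou to indicators of bounded sets $E\sbt U^\cm$: $\int_E(-f)=\lim|\int_E f_j|\le\liminf\int_{U^\cm}|f_j|\le\liminf\|v_j\|_1$; monotone convergence in $E\nearrow U^\cm$ then yields $\int_{U^\cm}(-f)<\infty$, so $f\in L^1(\R^d)$.

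Finally, the same identity $\int f_j=\int_U f_j-\|v_j\|_1+o(1)$, combined with weak-$*$ convergence $\int_U f_j\to\int_U f$ and the bound $\int_{U^\cm}(-f)\le\liminf\|v_j\|_1$, gives $\int f\ge\liminf\int f_j\ge D'(U)^{-1}$. The normalization argument is now identical to that of \lemref{lem:T1.1}: $f(0)\in(0,1]$, so $f(0)^{-1}f$ is Delsarte admissible with integral $\ge f(0)^{-1}D'(U)^{-1}$; combining with the weak duality bound $D(U)\le D'(U)^{-1}$ (\thmref{thm:D1.1}) forces $f(0)=1$, whence $f$ itself is an extremal Delsarte admissible function and $D(U)D'(U)=1$. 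The main technical obstacle, compared with the Tur\'an case, is obtaining $f\in L^1(\R^d)$ and controlling $\int_{U^\cm} f$ in the limit: in the Tur\'an setting $f$ vanishes off the finite-measure set $\bar U$ and these issues evaporate, whereas here the only control on $f$ over the possibly infinite-measure set $U^\cm$ comes from the slack sequence $\{v_j\}$, which must be propagated through weak-$*$ limits and Fatou-type inequalities.
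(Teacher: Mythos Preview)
Your proof is correct and follows the same overall architecture as the paper's, but the technical core---showing that the limit $f$ lies in $L^1(\R^d)$ and satisfies $\int f \ge D'(U)^{-1}$---is handled differently. The paper bounds $\|f_j\|_{L^1}$ \emph{directly} via the decomposition
\[
\|f_j\|_1 = \int_{\R^d}(-f_j) + \int_U (f_j+|f_j|) + \int_{U^\cm}(f_j+|f_j|)
\]
together with the pointwise inequality $f_j+|f_j| \le 2|f_j+v_j|$ on $U^\cm$; vague compactness of the $L^1$-bounded sequence $\{f_j\}$ then delivers $f\in L^1$ for free, after which a large-ball cutoff argument yields $\int f \ge D'(U)^{-1}$. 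Your route instead first bounds $\|v_j\|_1$, passes to a weak-$*$ limit in $L^\infty$, and recovers integrability of $f$ a posteriori from $\int_{U^\cm}(-f) \le \liminf\|v_j\|_1$. Two small corrections: the displayed equality $\int_E(-f)=\lim|\int_E f_j|$ should read $\int_E(-f)=\lim\int_E(-f_j)$, and to obtain the bound with $\liminf$ rather than $\limsup$ you should pass to a further subsequence along which $\|v_j\|_1$ converges. The paper's decomposition is slicker in that it bypasses the separate $L^\infty$ weak-$*$ extraction and the Fatou/monotone-convergence step; your approach, on the other hand, makes the role of the slack $v_j$ more transparent and unifies the $L^1$ estimate with the lower bound on $\int f$ through the single identity $\int f_j = \int_U f_j - \|v_j\|_1 + o(1)$.
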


\begin{proof}
Again one direction is obvious:
 if \eqref{eq:D1.3} holds then there is a sequence  of
Delsarte admissible functions $f_j$ such that $\int f_j \to D'(U)^{-1}$.
Then $f = f_j$ is a function in $Y$, $u = \ft{f}_j$ is a nonnegative
function belonging to $L^1(\R^d)$, $v = - f|_{U^\cm}$  is a nonnegative
function in $L^1(U^\cm)$, so together with $a=b=0$,
the quadruple \eqref{eq:D1.4} becomes
$(0, 0, \int f_j, 1)$ which belongs to $K$ and
whose limit is \eqref{eq:D1.5}.

We now must prove also the converse direction.
Assume that 
\begin{equation}
\label{eq:D1.6.5}
(\ft{f}_j - u_j, - f_j|_{U^\cm} - v_j,  \tint f_j - a_j, f_j(0)+b_j)
\end{equation}
 is a sequence
in $K$ converging to \eqref{eq:D1.5}, that is,
\begin{equation}
\label{eq:D1.6}
\tint_{\R^d} |\ft{f}_j - u_j| \to 0, \quad 
\tint_{U^\cm} |f_j + v_j| \to 0, \quad
 {\textstyle \int}  f_j - a_j \to D'(U)^{-1}, \quad f_j(0)+b_j \to 1.
\end{equation}

In  the same way as in 
the proof \lemref{lem:T1.1}, we can conclude that $\{u_j\}$ and
$\{\ft{f}_j\}$  are two bounded sequences
in $L^1(\R^d)$, with  $\limsup \|u_j\|_1$ and
$\limsup \|\ft{f}_j\|_1$ both not exceeding $1$,
and after passing to a subsequence, 
these two sequences converge vaguely to a common
vague limit which is a finite  positive measure $\mu$.

We now  show that also 
$\{f_j\}$ is a  bounded sequence in $L^1(\R^d)$. Indeed,
we have
\begin{equation}
\label{eq:D2.3.1}
\|f_j\|_1 = 
\int_{\R^d} (-f_j) + \int_{U}(f_j + |f_j|)
+ \int_{U^\cm}(f_j + |f_j|).
\end{equation}
The first integral $\int  (-f_j)$ does not exceed 
 $ a_j  - \int  f_j$ which tends to a limit
by \eqref{eq:D1.6}, hence this
integral remains bounded from above.
The second integral $\int_{U}(f_j + |f_j|)$
does not exceed $2 m(U) \|f_j\|_\infty$
which is bounded due to the
inequality 
$\|f_j\|_\infty \le \|\ft{f}_j\|_1$
and the fact that 
$\{\ft{f}_j\}$ is a bounded sequence in $L^1(\R^d)$.
To estimate the third
integral $\int_{U^\cm}(f_j + |f_j|)$
we observe that on the set $U^\cm$ we have
$f_j \le -v_j + |f_j  + v_j|$ as well as 
the inequality
 $|f_j| \le v_j + |f_j  + v_j|$ 
(since $v_j$ is a nonnegative function).
Hence $f_j + |f_j| \le 2|f_j  + v_j|$ and so 
the integral $\int_{U^\cm}(f_j + |f_j|)$
does not exceed
$2 \tint_{U^\cm} |f_j + v_j|$, which tends to zero
by \eqref{eq:D1.6} and in particular remains bounded.
Thus  the right hand side of
\eqref{eq:D2.3.1} is bounded from above,
and $\{f_j\}$ is a  bounded sequence in $L^1(\R^d)$.

As a consequence, again 
by passing to a  subsequence we may assume that
$f_j$ converges vaguely to a finite  signed measure $\nu$.
The vague convergence implies that both $f_j \to \nu$
and $\ft{f}_j \to \mu$
 in the sense of tempered distributions.
Hence
 we must have $\ft{\nu} = \mu$, which implies that in fact
both $\nu$ and $\mu$ are real-valued and even continuous 
functions belonging to $L^1(\R^d)$. We thus denote
$f = \nu$ and note that $\ft{f} = \mu$ is a nonnegative function.

We now claim that the function
$f$ is nonpositive in $U^\cm$.
To see this, let $\psi$ be a smooth nonnegative function 
with compact support contained in the interior of $U^\cm$. Then
\begin{equation}
\label{eq:D1.9.1}
 -\int_{\R^d}  f_j \psi =  \int_{U^\cm} v_j \psi -  \int_{U^\cm} ( f_j +  v_j ) \psi.
\end{equation}
The first integral on the right hand side is nonnegative, 
while the second integral tends to zero as $j \to \infty$ due to 
\eqref{eq:D1.6}. Hence 
using the vague convergence we obtain
\begin{equation}
\label{eq:D1.9.2}
 -\tint f \psi =  \lim_{j \to \infty}( - \tint f_j \psi )\ge 0.
\end{equation}
As this holds for an arbitrary
smooth nonnegative function $\psi$ 
with compact support contained in the interior of $U^\cm$,
and since $f$ is a continuous function,
this  implies that $f$  is nonpositive in the interior of $U^\cm$.
Since we have assumed that the closed set $U^\cm$ is 
equal to the closure of its interior, 
it follows again by the continuity of $f$ that 
$f$ must be nonpositive in the whole set $U^\cm$.

Next we claim that
$\int f \ge D'(U)^{-1}$. To prove this, 
let $\eps > 0$ be given.
We choose a large ball $B$ 
such that   $m(U \setminus B) < \eps$,
and also  
$\int_{B^\cm} |f| < \eps$. Let
$\pphi$ be a Schwartz function such that
$0 \le \pphi \le 1$, and $\pphi = 1$ on $B$.
We have
\begin{equation}
\label{eq:D6.1}
 \int ( 1 - \pphi ) \cdot  f_j = 
 \int_{U} (1 - \pphi) \cdot f_j
 + \int_{U^\cm} (1 - \pphi)\cdot ( f_j + v_j)
- \int_{U^\cm} (1 - \pphi)\cdot v_j.
\end{equation}
Since  $\limsup \|f_j\|_\infty \le 
\limsup \|\ft{f}_j\|_1 \le 1$,
the function $ (1 - \pphi) \cdot  f_j $
is bounded in modulus by an absolute
constant $C$ for all sufficiently large $j$,
and it vanishes on $B$. Hence
the first integral on the right hand side
of \eqref{eq:D6.1} does not exceed
$C \cdot m(U \setminus B) < C \eps$.
The second integral tends to zero as $j \to \infty$ 
due  to  \eqref{eq:D1.6}, while the 
third integral is nonnegative. 
This shows that
$\limsup \int ( 1 - \pphi) \cdot  f_j  < C \eps$.
Next, we have $f_j \to f$ 
 in the sense of tempered distributions, hence
\begin{equation}
\label{eq:D6.7.3}
\int f \cdot \pphi
= \lim_{j \to \infty} \int f_j  \cdot \pphi 
= \lim_{j \to \infty} \Big\{
(\tint f_j - a_j) + a_j -
 \tint ( 1 - \pphi) \cdot f_j  \Big\}.
\end{equation}
Since  $\tint f_j - a_j \to  D'(U)^{-1}$,
$a_j$ is a nonnegative scalar, and
$\limsup \int ( 1 - \pphi) \cdot  f_j  < C \eps$,
this implies that
\begin{equation}
\label{eq:D6.7.9}
\int f \cdot \pphi >  D'(U)^{-1} - C \eps.
\end{equation}
On the other hand,  since the function
 $ 1 - \pphi$ vanishes on $B$, we have
\begin{equation}
\label{eq:D6.7.4}
\int f \cdot \pphi = \int f - \int_{B^\cm} (1- \pphi ) \cdot f 
\le \int f + \int_{B^\cm} |f| < \int f + \eps.
\end{equation}
Hence \eqref{eq:D6.7.9} and \eqref{eq:D6.7.4}
yield that
$ \int f >   D'(U)^{-1} - (C+1) \eps$.
As this holds for any $\eps > 0$,
this shows that indeed 
$ \int f  \ge   D'(U)^{-1}$, and
thus our claim is established.

Finally, we  show that 
$f$ is a Delsarte admissible function, that is,
we need  to establish that $f(0)=1$.
In  the same way as in 
the proof \lemref{lem:T1.1}, we can show
at the first step that  we have $0 < f(0) \le 1 $
(using  the fact that $\limsup \int u_j \le 1$).
Hence  $f(0)^{-1} f$  is a Delsarte
admissible function, whose integral is not less than
$ f(0)^{-1}  D'(U)^{-1}$.
But on the other hand,
 $f(0)^{-1} \int f
 \le D(U) \le  D'(U)^{-1}$ due to the definition
 of the Delsarte constant $D(U)$ and the inequality
\eqref{eq:D1.1}. Hence $f(0) = 1$ and  $f$ is Delsarte
admissible, and moreover, 
$\int f = D(U) =  D'(U)^{-1}$. In particular, we conclude
that the equality \eqref{eq:D1.3} holds.
This completes the proof of \lemref{lem:D1.1}.
\end{proof}

\subsubsection{}
We continue to the proof of the strong linear duality equality \eqref{eq:D1.3}.

\begin{proof}[Proof of \thmref{thm:D1.3}]
Due to \lemref{lem:D1.1},
in order to prove  the equality \eqref{eq:D1.3}  it suffices to show
that the quadruple \eqref{eq:D1.5} must belong to the closure of $K$.
Suppose to the contrary that this is not the case. Since $K$ is convex, then
 the Hahn-Banach separation theorem 
  (see again \cite[Theorem 3.4]{Rud91}) 
  yields a continuous linear functional on the  
space   $L^1(\R^d) \times L^1(U^\cm) \times \R \times \R$ 
which separates  the closure of $K$ from the quadruple
\eqref{eq:D1.5}, that is, there exists an element 
 $(g,h,p,q)$  of  the space $L^\infty(\R^d)  \times L^\infty(U^\cm) \times \R \times \R$
  (again the functions in
the first and second components are taken real-valued),  and there is 
 a real scalar $c$, such that the inequality
\begin{equation}
\label{eq:D1.9}
\tint_{\R^d} (\ft{f} - u) g
 + \tint_{U^\cm} (- f - v) h 
 + p ( \tint f  - a) - q (f(0)+b)  \le c
\end{equation}
holds for every $f \in Y$, every  nonnegative function
$u\in L^1(\R^d)$, 
every nonnegative function $v \in L^1(U^\cm)$, 
and for every 
 nonnegative scalars $a,b$, while at the same time 
\begin{equation}
\label{eq:D1.8}
 p D'(U)^{-1} - q > c.
\end{equation}
(The left hand side of \eqref{eq:D1.8}
is the action of $(g,h,p,q)$ on the quadruple
\eqref{eq:D1.5}.)

In a similar way as in the proof of \thmref{thm:T1.3},
we  can show that both functions $g$ and $h$
are  nonnegative a.e.\ in
 their respective domains of definition $\R^d$ and $U^\cm$,
 and that the scalars $p,q$ are nonnegative.
In turn, after setting $u = 0$, $v=0$  and $a=b=0$
 in the inequality \eqref{eq:D1.9} it follows
 (using the fact that $Y$ is a linear space) that
\begin{equation}
\label{eq:D2.1}
\int_{\R^d} \ft{f}  g  - \int_{U^\cm} f h + p   \int_{\R^d} f   - q f(0)  = 0
\end{equation}
for every $f \in Y$. Hence we
 may assume that $c=0$.

By replacing $g(t)$ with $\tfrac1{2} (g(t) + g(-t))$, 
and similarly replacing 
 $h(t)$ with $\tfrac1{2} (h(t) + h(-t))$, we may
assume that both $g$ and $h$
are  nonnegative and even functions
on their respective domains of definition $\R^d$ and $U^\cm$
(we note here that $-U^\cm = U^\cm$).

It will be convenient now to extend $h$
to the whole $\R^d$ by setting $h = 0$ on $U$.
In this case, for every  \emph{Schwartz} function $f \in Y$
we can write \eqref{eq:D2.1} as 
\begin{equation}
\label{eq:D2.2}
 ( \ft{g} -h + p  - q  \del_0) (f) =  0.
\end{equation}
Using the fact that the tempered distribution
$ \ft{g} - h + p  - q \del_0$ is real and even,
the  equality 
\eqref{eq:D2.2} for all Schwartz functions $f \in Y$ 
implies that
$ \ft{g} -h  + p  - q \del_0 = 0$.

Next we show that  the scalars $p,q$ are 
not only nonnegative, but in fact must be strictly
positive. Indeed, from 
 \eqref{eq:D1.8} we obtain that $p> 0$ (since
 $c=0$).   Hence
  $g + p \del_0$ is a nonzero positive measure,
  whose Fourier transform satisfies
$ \ft{g}  + p = q \del_0 + h$, and as a consequence,
$ \ft{g}  + p = q \del_0$ in $U$. But since 
$ \ft{g}  + p$ cannot vanish in any neighborhood
  of the origin, this implies that $q>0$.

Finally, define $\al = q^{-1} ( \ft{g}  + p )$. Then $\al = \del_0 + q^{-1} h$,
the function $q^{-1} h$ vanishes on $U$ and is nonnegative on
$U^\cm$, 
and $\ft{\al}$ is a positive measure. Hence $\al$ is an
admissible tempered distribution for the dual Delsarte problem.
But 
 $\ft{\al} = q^{-1}( p  \del_0 + g)$, so that $\ft{\al}$ has an
 atom at the origin of mass $\ft{\al}(\{0\}) = p/q$. However due to
\eqref{eq:D1.8} and recalling that  $c=0$,
we have $\ft{\al}(\{0\}) > D'(U)$ which gives us the desired
contradiction.
\end{proof}

\subsubsection{Remark}
Again we obtain an interesting fact as a consequence of the proof.
We may consider a smaller class of admissible tempered distributions 
$\al$ by requiring  that 
$\al = \del_0 + h$, where $h \in L^\infty(\R^d)$
is a nonnegative even function which vanishes a.e.\ on $U$, 
and that $\ft{\al}$ is of the form  $\ft{\al} = \ft{\al}(\{0\})  \del_0 + g$, where $g$
is a nonnegative even function in $L^\infty(\R^d)$. Then
the supremum of $\ft{\al}(\{0\}) $ over this smaller class of admissible 
tempered distributions 
$\al$ still gives us the dual Delsarte constant $D'(U)$.

\subsection{Existence of extremizers for the Delsarte problem and its dual}
\label{ss:existence-delsarte}
We say that a Delsarte admissible function $f$
is extremal if it satisfies $\int f = D(U)$.
Similarly, 
a tempered distribution  $\al$ which is 
admissible for the dual Delsarte problem
is called extremal
if $ \ft{\al} (\{0\})  = D'(U)$.

The existence of a Delsarte extremizer 
in the general context of locally compact
abelian groups was proved in \cite{Ram25},
\cite[Corollary 20]{BRR24}.
The existence of an extremizer for the 
dual Delsarte problem is shown in 
\cite[Proposition 3.6]{CLS22}.

\begin{thm}
\label{thm:D8.1}
Let $U \sbt \R^d$ be an open set 
satisfying \eqref{it:dsadmiti}, \eqref{it:dsadmitii}, \eqref{it:dsadmitiii}. Then,
\begin{enumerate-roman}
\item \label{it:dex.1}
The Delsarte problem admits at least
one extremal function $f$;
\item \label{it:dex.2}
The dual Delsarte problem admits at least
one extremal  tempered distribution $\al$.
\end{enumerate-roman}
\end{thm}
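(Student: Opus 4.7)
Part \ref{it:dex.1} will follow immediately from \lemref{lem:D1.1}. Indeed, \thmref{thm:D1.3} has already established the equality $D(U)D'(U)=1$, and the ``moreover'' clause of \lemref{lem:D1.1} then guarantees the existence of a Delsarte admissible function $f$ satisfying $\int f = D(U)$. No further argument is needed for part \ref{it:dex.1}.

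For part \ref{it:dex.2}, the plan is to mimic the proof of \thmref{thm:T8.1}\ref{it:tex.2}, with one additional step ensuring that the limit retains the required measure-theoretic structure of an admissible $\al$. Let $\{\al_j\}$ be a maximizing sequence of tempered distributions admissible for the dual Delsarte problem, so that $\ft{\al}_j(\{0\}) \to D'(U)$, and write $\al_j = \del_0 + \be_j$ with each $\be_j$ a positive measure supported in $U^\cm$. Since each $\al_j$ equals $\del_0$ on the neighborhood $U$ of the origin, \lemref{lem:T6.9.1} gives that $\{\ft{\al}_j\}$ is uniformly translation-bounded with constant depending only on $U$. Passing to a subsequence, $\ft{\al}_j$ converges vaguely to a translation-bounded positive measure, which we write as $\ft{\al}$ for a uniquely determined $\al \in \S'(\R^d)$. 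Uniform translation-boundedness together with vague convergence upgrades this to convergence in $\S'(\R^d)$ (Schwartz functions can be split into a compactly supported piece, for which vague convergence applies directly, and a rapidly decaying tail, which is negligible uniformly in $j$ thanks to translation-boundedness), and hence $\al_j \to \al$ in $\S'(\R^d)$ as well by Fourier duality.

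The crucial new step, compared to the Tur\'an case, is to verify that the limit $\al$ still has the form $\al = \del_0 + \be$ with $\be$ a positive measure supported in $U^\cm$. Setting $\be := \al - \del_0$, we have $\be_j \to \be$ in $\S'(\R^d)$. Since each $\be_j$ is a positive measure, $\be_j(\pphi) \ge 0$ for every nonnegative $\pphi \in \S(\R^d)$, and passing to the limit gives $\be(\pphi) \ge 0$; thus $\be$ is a positive distribution, and so by the classical theorem of L.~Schwartz it is a positive Radon measure. Moreover, each $\be_j$ vanishes on every test function with compact support in $U$, and the limit inherits the same property, so $\be$ is supported in $U^\cm$. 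Therefore $\al$ is admissible for the dual Delsarte problem.

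It remains to verify extremality. Since each $\ft{\al}_j$ is a positive measure carrying an atom of mass $\ft{\al}_j(\{0\})$ at the origin, the upper semi-continuity of mass on compact sets under vague convergence of positive measures yields $\ft{\al}(\{0\}) \ge \limsup \ft{\al}_j(\{0\}) = D'(U)$; the reverse inequality holds by the definition of $D'(U)$ as a supremum. Hence $\ft{\al}(\{0\}) = D'(U)$, and $\al$ is extremal. The only genuinely nontrivial point in the whole argument is the appeal to Schwartz's theorem to promote the distributional limit $\be$ back to a positive measure; everything else is a direct adaptation of the Tur\'an existence argument.
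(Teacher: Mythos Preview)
Your argument is correct and follows essentially the same route as the paper's proof: part \ref{it:dex.1} via \lemref{lem:D1.1} and \thmref{thm:D1.3}, and part \ref{it:dex.2} via a vague-compactness argument on $\{\ft{\al}_j\}$ using \lemref{lem:T6.9.1}, followed by checking that the limit remains admissible and carries the correct atomic mass at the origin. The only difference is that you spell out explicitly (via Schwartz's theorem and upper semicontinuity of point masses) what the paper leaves as a one-line assertion.
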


\begin{proof}
Again part \ref{it:dex.1} follows 
from \lemref{lem:D1.1} and 
the equality \eqref{eq:D1.3}
of \thmref{thm:D1.3}, which is now proved.
We therefore turn to prove part \ref{it:dex.2}.

Let $\{ \al_j\}$ be a sequence  of
tempered distributions  which are 
admissible for the dual
 Delsarte problem, and such that
  $\ft{\al}_j( \{0 \})  \to D'(U)$.
\lemref{lem:T6.9.1} implies
that $\{\ft{\al}_j\}$ is a 
uniformly translation-bounded
 sequence of positive measures.
 So after passing to a subsequence 
 we may assume that $\ft{\al}_j$ converges
 vaguely  to a certain translation-bounded positive
 measure, which we denote as $\ft{\al}$
 for some
 tempered distribution $\al$.
 The  uniform   translation-boundedness and the
 vague convergence imply that 
 $\ft{\al}_j \to \ft{\al}$ 
in the sense of  tempered distributions.
Hence also $\al_j \to \al$ 
in the sense of  tempered distributions.
Since we have $\al_j = \del_0 +  \be_j$
 where $\be_j$ is a positive measure supported in
the closed set $U^\cm$, it follows that the limiting
tempered distribution $\al$ must also be of the form 
$\al = \del_0 + \be$ for some positive measure $\be$ 
supported in the closed set $U^\cm$. 
Hence  $\al$ is an
admissible tempered distribution
for the dual Delsarte problem.
Moreover,  since the measure $\ft{\al}_j$ 
is  positive and has mass 
$\ft{\al}_j( \{0 \}) $
at the origin, then the vague limit $\ft{\al}$
also has an atom at the origin, of mass at least 
$\lim \ft{\al}_j( \{0 \})  = D'(U)$.
We conclude that 
$\al$ is extremal for the 
dual Delsarte problem.
\end{proof}

 \subsection{Relation between extremizers
 for the Delsarte problem and its dual}\label{ss:properties-delsarte}

Our next goal is to establish   relations between 
extremal functions for the Delsarte problem,
and extremal
tempered distributions for the 
dual Delsarte problem.

 \subsubsection{}
 
We start with a few general observations.
Let $f$ be any function admissible for the Delsarte problem,
and let  $\al$ be any tempered distribution admissible 
for the dual Delsarte  problem (we assume neither
$f$ nor $\al$ to be extremal here).
 
 First,   note that the products $f \cdot \al$ and
$\ft{f} \cdot \ft{\al}$  are both well-defined,
since  $f$ and $\ft{f}$ are
  continuous functions,  while
$\al$ and  $\ft{\al}$ are positive measures.

Next, we recall that the convolution
$\ft{f} \ast \ft{\al}$  is well-defined.
Indeed, $\ft{f}$ is a nonnegative continuous
 function in $L^1(\R^d)$, while 
  $\ft{\al}$ is  a positive translation-bounded
  measure
   (due to \lemref{lem:T6.9.1}),
    hence $\ft{f} \ast \ft{\al}$
is a well-defined, positive translation-bounded measure, 
which is also 
a locally integrable function.

We   claim that also  the convolution $f \ast \al$ 
is well-defined. Indeed, we have:

\begin{lem}
\label{lem:D6.9.1}
Let $\al$ be any tempered distribution  admissible 
for the dual Delsarte problem. Then $\al$ is a
translation-bounded measure.
\end{lem}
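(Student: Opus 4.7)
The plan is to deduce translation-boundedness of $\al$ from translation-boundedness of $\ft{\al}$ (already guaranteed by Lemma \ref{lem:T6.9.1}) via a duality argument. The key observation is that $\al = \del_0 + \be$ is a positive measure, so one can bound the mass $\al(B+t)$ from above by convolving $\al$ with a suitable Schwartz function that dominates $\1_B$.

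First, I would verify that $\al$ is indeed a positive measure: $\del_0$ is a positive measure, and $\be$ is a positive measure by the definition of dual Delsarte admissibility, so $\al = \del_0 + \be$ is a positive measure. Next I would invoke Lemma \ref{lem:T6.9.1}, applied in a small open neighborhood of the origin contained in $U$, which is legitimate since $\al = \del_0$ on $U$ and $\ft{\al}$ is a positive measure. This yields that $\ft{\al}$ is a translation-bounded positive measure.

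Then I would construct an auxiliary Schwartz function with the right properties: choose $\chi$ a smooth, nonnegative, even, compactly supported function with $\ft{\chi}$ real-valued and large enough near the origin that, after setting $\psi = \chi \ast \tilde{\chi}$ where $\tilde{\chi}(x)=\chi(-x)$, one has $\psi$ smooth, nonnegative, even, compactly supported, with $\ft{\psi} = |\ft{\chi}|^2 \ge 0$ everywhere and $\ft{\psi}(s) \ge 1$ for $s$ in the open unit ball $B$ (it is enough to scale $\chi$ appropriately). Thus $\ft{\psi}(s) \ge \1_B(s)$ for every $s \in \R^d$.

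The heart of the argument is then the chain of inequalities:
\begin{equation}
\al(B+t) = \int \1_B(y-t)\,d\al(y) \le \int \ft{\psi}(y-t)\,d\al(y) = (\ft{\psi} \ast \al)(t),
\end{equation}
valid since $\al$ is a positive measure and $\ft{\psi}$ is even. The integral makes sense pointwise because $\al$, being a tempered positive measure, has at most polynomial growth, while $\ft{\psi}$ is Schwartz. I would then show that $\ft{\psi} \ast \al$ is a bounded continuous function: by standard distributional Fourier calculus, its Fourier transform (as a tempered distribution) equals $\psi \cdot \ft{\al}$, and since $\psi$ has compact support while $\ft{\al}$ is a translation-bounded positive measure, the product $\psi \cdot \ft{\al}$ is a finite, compactly supported, positive measure. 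Hence $\ft{\psi} \ast \al$ equals the inverse Fourier transform of a finite measure, so it is a bounded continuous function with supremum norm at most $\int \psi\,d\ft{\al} < \infty$. Combining with the previous display gives $\sup_t \al(B+t) \le \int \psi\,d\ft{\al}$, establishing translation-boundedness of $\al$.

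The main obstacle is the careful justification of Step involving $(\ft{\psi} \ast \al)^{\wedge} = \psi \cdot \ft{\al}$ and the identification of the pointwise integral $\int \ft{\psi}(y-t)\,d\al(y)$ with the distributional convolution; both rely on the fact that a positive measure which is a tempered distribution has polynomial growth and so pairs against Schwartz functions in the expected way, together with the associativity-type statement of Lemma \ref{lemB1} (or a direct verification since $\ft{\psi}$ is Schwartz). Once these standard but delicate identifications are in place, the argument closes cleanly.
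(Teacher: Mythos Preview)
Your proof is correct and follows essentially the same approach as the paper: bound $\al(B+t)$ by $(\ft{\psi}\ast\al)(t)$ for a Schwartz function with $\ft{\psi}\ge\1_B$, then use the translation-boundedness of $\ft{\al}$ (from \lemref{lem:T6.9.1}) to conclude. The paper's argument is marginally more direct---it simply writes $\int\ft{\pphi}(s-t)\,d\al(s)=\int e^{2\pi i\dotprod{t}{x}}\pphi(x)\,d\ft{\al}(x)$ via Parseval and bounds by $\int|\pphi|\,d\ft{\al}$, avoiding the need for compact support of $\psi$---but the content is the same.
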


\begin{proof}
We choose  and fix a Schwartz function  $\pphi$
satisfying $\ft{\varphi}(t) \geq \1_B(t)$
for all $t \in \R^d$, where
$B$ is the open unit ball. Since $\al$ is a positive
measure, we have
\begin{equation}
\al(B+t)  \leq \int \ft{\pphi}(s-t)  d \al(s) =
  \int e^{2 \pi i \dotprod{t}{x}}  \varphi(x) d\ft{\al}(x),
\end{equation}
and hence
\begin{equation}
\sup_{t \in \R^d} \al(B+t)   \leq 
\int |\varphi(x)|  d\ft{\al}(x) < +\infty,
\end{equation}
where the last integral is finite since 
$\varphi$ has fast
decay, while $\ft{\al}$ is a translation-bounded measure
due to \lemref{lem:T6.9.1}.
 This shows that also $\al$ is translation-bounded. 
\end{proof}

Hence, if $f$ is Delsarte admissible and
$\al$ is admissible for the dual Delsarte  problem,
then $f \in L^1(\R^d)$ while $\al$ 
 is  a positive translation-bounded   measure
 (due to \lemref{lem:D6.9.1}),
 which implies that the convolution $f \ast \al$
 is a well-defined translation-bounded signed measure, 
which is also 
a locally integrable function.

 \subsubsection{}
Now suppose that  $f$ and $\al$ are 
extremizers for the Delsarte problem and 
its dual, respectively. 
This means that 
 $f$ is a Delsarte  admissible function, with $\int f = D(U)$,
while $\al$ is a tempered distribution which is admissible 
 for the dual Delsarte  problem,  
  such that $\ft{\al}(\{0\}) = D'(U)$.

\begin{thm}
\label{thm:D3.9.2}
Let $U \sbt \R^d$ be an open set 
satisfying \eqref{it:dsadmiti}, \eqref{it:dsadmitii}, \eqref{it:dsadmitiii}. 
If $f$ and $\al$ are any two extremals
for the Delsarte problem and
 its dual, respectively, then
\begin{enumerate-roman}
\item
 \label{it:dsrel.1} 
The  
measure $\al$ is supported on the closed set
 $\{x : f(x)=0\} \cup \{0\}$,
 and as  a consequence,  we have
 $f \cdot \al = \del_0$ and $\ft{f} \ast \ft{\al} = 1$ a.e.;
\item
 \label{it:dsrel.2}
The  
measure $\ft{\al}$ is supported on the closed set
 $\{t : \ft{f}(t)=0\} \cup \{0\}$, 
and therefore 
 $\ft{f} \cdot \ft{\al} = \del_0$  and $f \ast \al = 1$ a.e.
\end{enumerate-roman}

\end{thm}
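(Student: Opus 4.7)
The plan is to adapt the strategy of \thmref{thm:T3.7} (the Tur\'{a}n analog) to the Delsarte setting, going through the chain of inequalities from the weak duality proof (\thmref{thm:D1.1}) rather than via a direct analog of \lemref{lem:T3.1}, which is unavailable here since $f$ is not required to vanish on $U^\cm$. The idea is that for extremizers the chain must collapse to equalities in the limit, and each collapsing inequality yields one of the two support statements.

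Write $\al = \del_0 + \be$ and $\ft{\al} = \ft{\al}(\{0\})\del_0 + \mu$ as in \secref{ss:dual-delsarte}, with $\be$ a positive measure supported on $U^\cm$ and $\mu$ a positive measure. Fix $\pphi$ and $\pphi_\eps$ as in the proof of \thmref{thm:D1.1}. Equating \eqref{eq:D3.1} and \eqref{eq:D3.3} (using $f(0) = 1$) yields the identity
\begin{equation}
\ft{\al}(\{0\}) \int \ft{f}(t)\pphi_\eps(t)\,dt \;+\; (\ft{f} \ast \mu)(\pphi_\eps) \;=\; \ft{\pphi}_\eps(0) \;+\; \int \ft{\pphi}_\eps(x) f(x)\,d\be(x).
\end{equation}
By extremality and strong duality $D(U) D'(U) = 1$ (\thmref{thm:D1.3}), the first term on the left tends to $\ft{\al}(\{0\}) \ft{f}(0) = D'(U) D(U) = 1 = \ft{\pphi}_\eps(0)$ as $\eps \to 0$. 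Hence $\lim_{\eps \to 0} (\ft{f} \ast \mu)(\pphi_\eps) = \lim_{\eps \to 0} \int \ft{\pphi}_\eps f\,d\be$. The left-hand side is $\ge 0$ for every $\eps$ while the right-hand side is $\le 0$ for every $\eps$ (by the sign conditions on $\ft{f}, \mu, \pphi_\eps$ and on $\ft{\pphi}_\eps, f, \be$), so both limits must equal $0$.

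These vanishing limits produce the support statements via Fatou's lemma. For the first, Fubini gives $(\ft{f} \ast \mu)(\pphi_\eps) = \int (\ft{f} \ast \pphi_\eps)(s)\,d\mu(s)$, and $\ft{f} \ast \pphi_\eps \to \ft{f}$ pointwise with $\ft{f} \ast \pphi_\eps \ge 0$, so Fatou gives $\int \ft{f}\,d\mu \le 0$, hence $\int \ft{f}\,d\mu = 0$; by continuity and nonnegativity of $\ft{f}$, $\mu$ is supported on $\{\ft{f} = 0\}$, proving part \ref{it:dsrel.2}. For the second, apply Fatou to the nonnegative functions $-\ft{\pphi}_\eps f$ on $\supp(\be) \sbt U^\cm$, using $\ft{\pphi}_\eps(x) \to \ft{\pphi}(0) = 1$ pointwise: this gives $\int (-f)\,d\be \le 0$, hence $f = 0$ $\be$-a.e., so by continuity of $f$, $\be$ is supported on $\{f = 0\}$, proving part \ref{it:dsrel.1}.

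With the supports of $\be$ and $\mu$ identified, the remaining claims follow immediately: $f \cdot \be = 0$ gives $f \cdot \al = f(0)\del_0 = \del_0$, and Fourier transforming yields $\ft{f} \ast \ft{\al} = 1$ a.e.\ (using that the convolution is locally integrable, as observed in the proof of \thmref{thm:D1.1}); symmetrically, $\ft{f} \cdot \mu = 0$ gives $\ft{f} \cdot \ft{\al} = \ft{f}(0)\ft{\al}(\{0\})\del_0 = \del_0$, and inverse Fourier transforming yields $f \ast \al = 1$ a.e. The main technical point is the proper combination of the weak duality identity with Fatou's lemma and Fubini, but these are routine given the preparation already done for \thmref{thm:D1.1}.
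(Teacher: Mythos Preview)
Your proof is correct and follows essentially the same route as the paper: both revisit the two expressions for $(\ft{f}\ast\ft{\al})(\pphi_\eps)$ from the weak-duality argument (\thmref{thm:D1.1}), use extremality and strong duality to force equality in the limit, and read off the two support statements from the two slack terms. The only difference is cosmetic: where the paper deduces the supports by contradiction (choosing, as in the proof of \thmref{thm:T3.7}, a smooth minorant $\psi\le\ft{f}$ near a hypothetical bad point to force a strict inequality), you apply Fatou's lemma directly to get $\int\ft{f}\,d\mu=0$ and $\int(-f)\,d\be=0$, which is a mild streamlining of the same idea. For the final step $f\ast\al=1$ a.e.\ the paper cites \lemref{lem:D6.9.1} (that $\al$ itself is translation-bounded) and \lemref{lemB1}; you should cite these rather than just write ``inverse Fourier transforming'', but that is only a matter of attribution.
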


\begin{proof}
If
 $f$ and $\al$ are extremals then we have
$ \ft{\al}(\{0\}) \int f  = D'(U) D(U) = 1$.
Let us recall the proof of \thmref{thm:D1.1},
and examine the circumstances 
 under which the inequality $ \ft{\al}(\{0\}) \int f \le 1$
 becomes an equality.
 
We fix a nonnegative Schwartz function  $\pphi$ with
$\int \pphi = 1$, such that $\ft{\pphi}$ is nonnegative
and has compact support. Let
$\pphi_\eps(t) = \eps^{-d} \pphi(t/\eps)$, then 
$\ft{\pphi}_\eps(x)  = \ft{\pphi}(\eps x)$.
We can write 
$\ft{\al} =  D'(U)  \del_0 + \mu$,
where $\mu$ is a positive measure. Hence
\begin{equation}
\label{eq:D7.2}
(\ft{f} \ast \ft{\al} )(\pphi_\eps ) 
 = D'(U) \int  \ft{f} \cdot \pphi_\eps    + 
 \int 
(\ft{f} \ast \mu ) \cdot \pphi_\eps.
\end{equation}
On the other hand,
$\al = \del_0 + \be$  where
$\be$ is a positive measure supported on $U^\cm$. 
Recalling that the Fourier transform  of 
$f \cdot \al$  is $\ft{f} \ast \ft{\al}$,
due to \lemref{lemB1},
and since we have
$f(0) \ft{\pphi}_\eps (0) = 1$, this implies that
\begin{equation}
\label{eq:D7.3}
(\ft{f} \ast \ft{\al} )(\pphi_\eps ) =
(f \cdot \al)(\ft{\pphi}_\eps ) =
1 + \int_{U^\cm} \ft{\pphi}_\eps(x) f(x) d \beta(x)
\end{equation}
(we also recall that the  integral 
on the right hand side of \eqref{eq:D7.3}
is well-defined, since 
$\ft{\pphi}_\eps$ has compact support).

We now claim that the measure $\beta$ is supported
on the closed set $\{x : f(x) = 0\}$, 
and that the measure $\mu$ is supported
on the closed set $\{t : \ft{f}(t) = 0\}$.

To  prove these two claims, we first note that 
$\be$ is a positive measure supported on
$U^\cm$, while $f$ is a continuous function which
is nonpositive on $U^\cm$. Hence
$f \cdot \beta$ is a nonpositive measure.
Since $\ft{\pphi}_\eps$ is a nonnegative function,
this implies that 
the right hand side of 
\eqref{eq:D7.3} cannot exceed $1$. Moreover,
since $\ft{\pphi}_\eps \to 1$ locally uniformly as $\eps \to 0$, 
it follows that 
if $\beta$ is not supported
on the closed set $\{x : f(x) = 0\}$, 
then the $\limsup$ as $\eps \to 0$  of the right hand side of 
\eqref{eq:D7.3}
must be strictly smaller than $1$.

Next, recall that
 $\ft{f} \ast \mu$ is a positive
translation-bounded measure,
 which is also a locally integrable function.
Since $\pphi_\eps$ is  nonnegative, then
 $\int (\ft{f} \ast \mu) \cdot \pphi_\eps$  is 
nonnegative; and since  $\int \ft{f} \cdot \pphi_\eps \to \ft{f}(0) = D(U)$
 as $\eps \to 0$, 
the $\liminf$ as $\eps \to 0$   of the right hand side of 
\eqref{eq:D7.2} cannot
be  less than $D'(U)D(U) = 1$.
Moreover, if $\mu$ is not supported
on the closed set $\{t : \ft{f}(t) = 0\}$ then,
in a similar way to the proof of \thmref{thm:T3.7},
it can be shown that the function
$\ft{f} \ast \mu$ is a.e.\ not less than
some nonnegative continuous 
function whose value at the origin is 
 strictly  positive.   This implies that 
the $\liminf$  as $\eps \to 0$ of the right hand side of 
\eqref{eq:D7.2}
must be strictly larger than $1$.
 
 However, since \eqref{eq:D7.2} and \eqref{eq:D7.3} are equal,
  this implies that indeed $\beta$  is supported
on the closed set $\{x : f(x) = 0\}$, 
and $\mu$ is supported
on the closed set $\{t : \ft{f}(t) = 0\}$,
for otherwise this would lead to a contradiction.

We conclude that 
$f \cdot \al = \del_0$ and $\ft{f} \cdot \ft{\al} = \del_0$.
Finally, since both $f$ and $\ft{f}$ are in $L^1(\R^d)$, 
and both $\al$ and $\ft{\al}$ are translation-bounded measures,
this implies using \lemref{lemB1} 
that $\ft{f} \ast \ft{\al} = 1$ a.e.,
 and $f \ast \al = 1$ a.e.
Thus both  assertions 
\ref{it:dsrel.1} and \ref{it:dsrel.2} 
are proved.
 \end{proof}


\section{The Delsarte packing bound, tiling and spectrality}
\label{s:packing-tiling}

\subsection{The essential difference set}
Let $A \subset \R^d$ be a bounded measurable set of positive measure. The set
\begin{equation}
	\label{eqC2.1}
	\Delta(A) := \{t \in \R^d: m(A \cap (A + t)) > 0\}
\end{equation}
is called the \emph{essential difference set} of $A$.
It is a bounded origin-symmetric open set, that
serves as the measure-theoretic analog of the algebraic
difference set $A-A$.  In particular, if $A$ is an 
open set, then $\Delta(A) = A-A$.

In this section we connect packing, tiling and spectrality
properties of a  bounded  measurable  set $A \subset \R^d$
of positive measure,
to the  Delsarte problem for the 
essential difference set  $U = \Del(A)$.
We observe that this set $U$ 
satisfies \eqref{it:dsadmiti} and \eqref{it:dsadmitii}, 
but not necessarily \eqref{it:dsadmitiii},
as the example $A = (0,1) \cup (2,3) \sbt \R$ shows.

The Delsarte constant $D(\Del(A)) $ 
 of the  set $U = \Delta(A)$ satisfies
 \begin{equation}
\label{eqC2.8}
D(\Del(A))  \ge m(A),
\end{equation}
since the function
 $f = m(A)^{-1} \1_A \ast \1_{-A}$ is 
 Delsarte admissible, and $\int f = m(A)$.

\subsection{Packing}
\label{ss:packing}

If $\Lam \sbt \R^d$ is a countable set, then we say
that $A + \Lam$ is a \emph{packing} if the translated copies
$A + \lam$, $\lam \in \Lam$, are pairwise disjoint
up to measure zero.

Cohn and Elkies \cite{CE03} used the Delsarte
problem (not using this name though) as a method 
for obtaining
upper bounds for the density of sphere packings, or more generally,
packings by translates of a convex, centrally symmetric body
$A \sbt \R^d$, see \cite[Theorem B.1]{CE03}.

The following theorem extends the result to the
case where $A \sbt \R^d$ is a 
general bounded measurable set
(see also \cite[Theorem 3]{KR06} and \cite[Theorem 1.1]{BR23}).

\begin{thm}
\label{thm:ce03}
Let $A \sbt \R^d$ be a bounded measurable set of positive
measure. Then any packing by translates of $A$ has
density not exceeding $D(\Del(A))^{-1}$, that is,
the reciprocal of the Delsarte constant of the set $U = \Del(A)$.
\end{thm}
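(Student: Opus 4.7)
My plan follows the classical Cohn--Elkies strategy: first establish the bound for periodic packings using the Poisson summation formula, then reduce the general case by an approximation argument.

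\textbf{Step 1: Reformulate the packing condition.}
First, I would observe that $A+\Lam$ being a packing is equivalent to $m((A+\lam)\cap(A+\lam'))=0$ for all distinct $\lam,\lam'\in\Lam$, which by definition of $\Del(A)$ is equivalent to $(\Lam-\Lam)\setminus\{0\}\sbt\R^d\setminus\Del(A)$. Since $m(A)>0$, the set $\Del(A)$ is an open neighborhood of the origin, so $\Lam$ is uniformly discrete and its upper density $\bar\rho:=\limsup_{R\to\infty}|\Lam\cap B_R|/m(B_R)$ is finite.

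\textbf{Step 2: Periodic packings via Poisson summation.}
Assume first that $\Lam=L+F$ with $L$ a full-rank lattice and $F=\{y_1,\dots,y_k\}$ a finite set in a fundamental cell of $L$, so $\dens(\Lam)=k/\det L$. For any Delsarte admissible $f$ (taken sufficiently smooth for Poisson summation), the identity
\begin{equation*}
\sum_{j,j'=1}^{k}\sum_{\lam\in L}f(\lam+y_j-y_{j'}) \;=\; \frac{1}{\det L}\sum_{\xi\in L^*}\ft f(\xi)\,\Big|\sum_{j=1}^{k}e^{-2\pi i\dotprod{y_j}{\xi}}\Big|^{2}
\end{equation*}
holds. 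By Step~1, $f(\lam+y_j-y_{j'})\le 0$ unless $\lam+y_j-y_{j'}=0$, and since $F$ lies in a fundamental cell this forces $\lam=0$ and $j=j'$; the diagonal contributes $k\cdot f(0)=k$. Thus the left-hand side is at most $k$. On the right-hand side every term is nonnegative because $\ft f\ge 0$, so retaining only the $\xi=0$ term gives the lower bound $k^2\int f/\det L$. Together $\dens(\Lam)\int f\le 1$, and taking the supremum over all admissible $f$ yields $\dens(\Lam)\le D(\Del(A))^{-1}$.

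\textbf{Step 3: Approximation of general packings.}
For a general packing and any $\eps>0$, I would select a cube $Q$ of side $R$ with $|\Lam\cap Q|/R^d>\bar\rho-\eps$. Setting $F:=\Lam\cap Q$, $R':=R+\diam A$, and $\Lam':=F+R'\Z^d$, the buffer of width $\diam A$ between adjacent copies of $Q$ ensures $A+\Lam'$ is still a packing, and $\dens(\Lam')=|F|/(R')^d\ge\bar\rho-2\eps$ for $R$ sufficiently large. Applying Step~2 to $\Lam'$ gives $\bar\rho-2\eps\le D(\Del(A))^{-1}$, and letting $\eps\to 0$ finishes the proof.

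\textbf{Main obstacle.}
The subtle point is the validity of Poisson summation in Step~2 for a general Delsarte admissible $f$, since only $f,\ft f\in L^1$ with $f$ continuous is assumed, which is too weak for classical pointwise Poisson summation. I would handle this by first establishing the bound under the added hypothesis that $f$ is Schwartz (or smooth with compact support) and Delsarte admissible, then invoking a density argument to show that such $f$'s still realize $D(\Del(A))$ as a supremum, using the existence of a Delsarte extremizer (\thmref{thm:D8.1}) together with a standard mollification/truncation. An alternative route bypasses Poisson summation altogether: the vague limit of $m(B_R)^{-1}\,\delta_{\Lam\cap B_R}\ast\widetilde{\delta_{\Lam\cap B_R}}$ along an appropriate subsequence produces a positive-definite translation-bounded measure $\gamma$ supported in $\{0\}\cup\Del(A)^c$, and the rescaled distribution $\gamma/\bar\rho$ is admissible for the dual Delsarte problem with atom at the origin of mass at least $\bar\rho$; combined with strong linear duality (\thmref{thm:D1.3}) this recovers $\bar\rho\le D(\Del(A))^{-1}$.
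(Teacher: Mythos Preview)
Your reduction to periodic packings (Step~3) matches the paper's, which simply cites \cite[Appendix~A]{CE03} for that step. The real difference is in Step~2. The paper works on the \emph{dual} side: for $\Lam=L+F$ it forms the autocorrelation measure
\[
\gamma = |F|^{-1}\,\delta_F \ast \delta_{-F} \ast \delta_L,
\]
observes that $\gamma=\delta_0+\beta$ with $\beta$ a positive measure supported in $(\Lam-\Lam)\setminus\{0\}\subset\Del(A)^\cm$, and computes $\ft\gamma=|F|^{-1}|\ft\delta_F|^2\cdot\ft\delta_L$ using only the distributional identity $\ft\delta_L=(\det L)^{-1}\delta_{L^*}$. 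Thus $\gamma$ is admissible for the dual Delsarte problem with $\ft\gamma(\{0\})=|F|/\det L=\dens(\Lam)$, and \emph{weak} duality (\thmref{thm:D1.1}) immediately gives $\dens(\Lam)\le D(\Del(A))^{-1}$. No pointwise Poisson summation, no pairing with $f$, no regularity obstacle.

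Your primal-side argument runs into exactly the difficulty you flag, and your proposed repairs are shaky. Mollifying a Delsarte admissible $f$ by convolution destroys the sign condition $f\le 0$ on $U^\cm$, so smooth admissible functions are not obviously dense; and both results you invoke to rescue the argument---the existence of a Delsarte extremizer (\thmref{thm:D8.1}) and strong duality (\thmref{thm:D1.3})---require condition \eqref{it:dsadmitiii}, which $\Del(A)$ need \emph{not} satisfy for a general bounded measurable $A$ (the paper gives the example $A=(0,1)\cup(2,3)$ and explicitly remarks that the proof of \thmref{thm:ce03} uses only \thmref{thm:D1.1}). Your ``alternative route'' is morally the paper's argument---for periodic $\Lam$ the autocorrelation limit \emph{is} $\gamma$---but again you only need weak duality, not strong.
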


Hence, any Delsarte admissible
(with respect to the set $U = \Del(A)$) function
$f$, yields an upper bound $(\int f)^{-1}$
for the density of any packing by translates of $A$.
As an example, the function
$f = m(A)^{-1} \1_A \ast \1_{-A}$ 
yields the trivial volume bound $m(A)^{-1}$.

Note that the proof of \thmref{thm:ce03},
as well as Theorems \ref{thm:eutile} and \ref{thm:euspectral} 
below, only relies  on \thmref{thm:D1.1} which
does not require the condition  \eqref{it:dsadmitiii}.

\begin{proof}[Proof of \thmref{thm:ce03}]
It is well known and not hard to show that periodic packings
 come arbitrarily close to
the greatest packing density, see 
\cite[Appendix A]{CE03}. Hence it suffices to prove
that if $\Lam$ is periodic and $A + \Lam$ is a packing,
then the density of $\Lam$ cannot exceed $D(\Del(A))^{-1}$.

Indeed, if $\Lam$ is periodic then 
we may write $\Lam = L + F$ where
$L$ is a lattice and $F$ is a finite set such that 
$(F-F) \cap L = \{0\}$. Then the measure 
$\gam = |F|^{-1} \del_F \ast \del_{-F} \ast \del_L$ 
is  translation-bounded, positive and
supported on $F-F + L  = \Lam - \Lam$,
which is a subset of $\{0\} \cup \Del(A)^\cm$ by
the assumption that  $A + \Lam$ is a packing. Moreover, $\gam$ has a
unit mass at the origin and 
$\ft{\gam} = |F|^{-1} |\ft{\del}_F|^2 \cdot \ft{\del}_L$
is a positive measure, so $\gam$ is admissible
for the dual Delsarte problem. We now observe that
$\ft{\gam}$ has an atom at the origin
of mass $|F| \cdot (\det L)^{-1}$ which is exactly the
density of $\Lam$. This shows that the dual Delsarte constant
$D'(\Del(A))$ must be  at least as large as the
density of $\Lam$. By the weak linear duality  
inequality \eqref{eq:D1.1} this implies that
the density  of $\Lam$ cannot exceed $D(\Del(A))^{-1}$.
\end{proof}

Viazovska \cite{Via17} proved that if $A$ is the open
unit ball in $\R^8$, then the Delsarte bound $D(\Del(A))^{-1}$ 
coincides with the density of the $E_8$-lattice packing,
 showing that this is the
densest sphere packing in dimension $8$.
A similar result was subsequently proved also in dimension $24$,
see \cite{CKMRV17}. However, note that
the Delsarte bound $D(\Del(A))^{-1}$ is not expected
to yield the sharp estimate for sphere packing density in every
dimension, and moreover, for certain dimensions 
the Delsarte bound is known to be not sharp
(see \cite{CDV24} and the references therein).

\subsection{Tiling}
\label{ss:tiling}

We say that a  bounded measurable set 
$A \sbt \R^d$ \emph{tiles
by translations}, if there is a
countable set $\Lam \sbt \R^d$ such that
the translated copies
$A + \lam$, $\lam \in \Lam$, cover the whole
space without overlaps up to measure zero. 

The following result is a direct consequence of \thmref{thm:ce03} above
(see also \cite[Section 3.4]{KR06} and \cite[Proposition 5.7]{BR23}).

\begin{thm}
\label{thm:eutile}
If a bounded measurable set $A \sbt \R^d$
 tiles the space by translations, then we have $D(\Del(A)) = m(A)$.
\end{thm}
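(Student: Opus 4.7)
The lower bound $D(\Delta(A)) \geq m(A)$ is already recorded in \eqref{eqC2.8} via the Delsarte admissible function $f = m(A)^{-1}\1_A \ast \1_{-A}$; the plan is to prove the reverse inequality $D(\Delta(A)) \leq m(A)$ by direct appeal to \thmref{thm:ce03}.

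Let $\Lambda \subset \R^d$ be a translation set realising the tiling $A + \Lambda = \R^d$ (up to measure zero). Since a tiling is in particular a packing, \thmref{thm:ce03} bounds the density of $\Lambda$ from above by $D(\Delta(A))^{-1}$. It remains to observe that the density of $\Lambda$ equals $m(A)^{-1}$: fixing $r > 0$ with $A \subset B_r$ (where $B_R$ denotes the ball of radius $R$ about the origin), the translates $A + \lambda$ with $\lambda \in \Lambda \cap B_R$ are pairwise disjoint up to measure zero, are contained in $B_{R+r}$, and cover $B_{R-r}$. Thus
\begin{equation}
m(B_{R-r}) \leq |\Lambda \cap B_R| \cdot m(A) \leq m(B_{R+r}),
\end{equation}
so the standard density $|\Lambda \cap B_R|/m(B_R)$ tends to $m(A)^{-1}$ as $R \to \infty$. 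Combining $m(A)^{-1} \leq D(\Delta(A))^{-1}$ with \eqref{eqC2.8} then yields $D(\Delta(A)) = m(A)$.

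There is essentially no real obstacle here, since this is by design a direct corollary of \thmref{thm:ce03}. The only mild subtlety is that the proof of \thmref{thm:ce03} treats periodic packings explicitly (building the dual admissible distribution as $m(A) \cdot |F|^{-1}\delta_F \ast \delta_{-F} \ast \delta_L$ when $\Lambda = L + F$) and deduces the general case from the standard fact that periodic packings approach the supremum of packing densities. Hence the bound applies uniformly to every tiling, whether periodic or not, and a separate dual admissible construction from $\Lambda$ itself is not required.
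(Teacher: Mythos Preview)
Your argument is correct and follows exactly the paper's approach: invoke \thmref{thm:ce03} on the tiling (which is a packing of density $m(A)^{-1}$) to get $m(A)^{-1}\le D(\Delta(A))^{-1}$, and combine with \eqref{eqC2.8}. The only difference is that you spell out the standard density computation for a tiling, which the paper takes for granted.
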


\begin{proof}
Indeed, a tiling by translates of $A$ 
is a packing of density $m(A)^{-1}$. 
Hence, using \thmref{thm:ce03}
we obtain that
$m(A)^{-1}$ does not exceed $D(\Del(A))^{-1}$.
But we also have the converse inequality
\eqref{eqC2.8}, so we conclude that the equality
 $D(\Del(A)) = m(A)$ holds.
 \end{proof}

\subsection{Spectrality}
\label{ss:spectrality}

A bounded, measurable set $A \sbt \R^d$ is called 
\emph{spectral} if the space $L^2(A)$ has an orthogonal basis 
consisting of  exponential functions.
Fuglede  \cite{Fug74} famously conjectured  that $A$ is a spectral set
if and only if it can tile the space by translations.
This conjecture inspired extensive research over the years,
see \cite{Kol24} for the history of the
problem and an overview of the known
related results.

The following result is analogous to 
\thmref{thm:eutile}, but with the tiling assumption
being replaced by spectrality
(see also \cite[Theorem 5]{KR06}).

\begin{thm}
\label{thm:euspectral}
If a bounded measurable set $A \sbt \R^d$
is spectral, then $D(\Del(A)) = m(A)$.
\end{thm}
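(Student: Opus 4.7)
The inequality $D(\Del(A)) \ge m(A)$ is already supplied by \eqref{eqC2.8}, so the plan is to establish the reverse inequality $D(\Del(A)) \le m(A)$. By the weak Delsarte duality of Theorem~\ref{thm:D1.1}, it suffices to construct a tempered distribution $\al$ admissible for the dual Delsarte problem with respect to $U = \Del(A)$ satisfying $\ft{\al}(\{0\}) \ge m(A)^{-1}$. This mirrors the finite-group construction of Proposition~\ref{prop:f.d.1}, where the explicit dual extremizer was $h = |A|^{-2}|\sum_{\gam \in \Lam}\gam(\cdot)|^2$, a candidate built directly from the spectrum.

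Let $\Lam \sbt \R^d$ be a spectrum of $A$. Spectrality provides two key facts: orthogonality, $\ft{\1}_A(\lam - \lam') = 0$ for distinct $\lam, \lam' \in \Lam$, so that $\Lam - \Lam \setminus \{0\} \sbt \{\ft{\1}_A = 0\}$; and Parseval completeness, $\sum_{\lam \in \Lam}|\ft{\1}_A(t - \lam)|^2 = m(A)^2$ for a.e.\ $t$, which forces $\Lam$ to be uniformly discrete of density $m(A)$ (so $\delta_\Lam$ is translation-bounded). First I would rewrite the completeness identity as $|\ft{\1}_A|^2 \ast \delta_\Lam = m(A)^2$ and Fourier-transform it, arriving at the distributional equation
\[
(\1_A \ast \1_{-A}) \cdot \ft{\delta}_\Lam = m(A)^2\,\delta_0,
\]
in which $(\1_A \ast \1_{-A})(t) = m(A \cap (A+t))$ is strictly positive on $\Del(A)$, vanishes on $\Del(A)^\cm$, and equals $m(A)$ at the origin.

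The candidate is $\al := m(A)^{-1}\cdot\tfrac{1}{2}\bigl(\ft{\delta}_\Lam + \ft{\delta}_{-\Lam}\bigr)$, the normalized real part of $m(A)^{-1}\,\ft{\delta}_\Lam$. Taking real parts of the identity above yields $(\1_A \ast \1_{-A}) \cdot \al = m(A)\,\delta_0$; provided $\al$ is a positive measure, this pinpoints $\al(\{0\}) = 1$ and forces $\al \equiv 0$ on $\Del(A) \setminus \{0\}$, supplying the support condition of dual Delsarte admissibility. On the Fourier side, $\ft{\al} = m(A)^{-1}\cdot\tfrac{1}{2}(\delta_\Lam + \delta_{-\Lam})$ is manifestly a translation-bounded positive measure, with an atom of mass $m(A)^{-1}$ at the origin (assuming $0 \in \Lam$, which we may arrange by translating $\Lam$). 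Hence $\al$ will be admissible and will yield the desired bound.

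\textbf{The hard part} will be verifying that $\al$ is itself a genuine positive measure, equivalently, that the symmetrized spectrum measure $\tfrac{1}{2}(\delta_\Lam + \delta_{-\Lam})$ is positive definite. Direct calculation confirms this in concrete cases: e.g., for $A = (0,1)\cup(2,3)\sbt\R$ with spectrum $\{0, 1/4\} + \Z$, one has $\ft{\delta}_\Lam = (1 + e^{-i\pi \xi/2})\delta_\Z$ and $\Re\ft{\delta}_\Lam = \sum_n(1+\cos(\pi n/2))\,\delta_n \ge 0$. The general case calls for a compactness step: the normalized finite autocorrelations $|\Lam \cap B_R|^{-1}\,\delta_{\Lam \cap B_R} \ast \delta_{-(\Lam \cap B_R)}$ are by construction positive definite with unit atom at the origin and are uniformly translation-bounded by the uniform density of $\Lam$, so a vague limit (after extracting a subsequence) yields a positive definite measure that inherits the support identity from spectral Parseval, supplying the required positive~$\al$.
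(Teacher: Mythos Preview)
Your overall strategy matches the paper's: exhibit a dual-Delsarte admissible $\al$ for $U=\Del(A)$ with $\ft\al(\{0\})=m(A)^{-1}$ and invoke the weak duality inequality \eqref{eq:D1.1}. The paper carries this out simply by citing \cite[Theorem~3.1]{LM22}; you are effectively attempting to reprove that cited result from scratch via the autocorrelation of the spectrum.

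There is, however, a genuine gap in the compactness step. Two issues:

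\emph{(i)} The distributional identity $(\1_A\ast\1_{-A})\cdot\ft{\delta}_\Lam=m(A)^2\delta_0$ is not a priori meaningful: $\1_A\ast\1_{-A}$ is merely continuous (not smooth), and $\ft{\delta}_\Lam$ need not be a measure for a general uniformly discrete $\Lam$, so the product is undefined. Your symmetrized candidate $\al=m(A)^{-1}\Re\ft{\delta}_\Lam$ inherits this problem.

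\emph{(ii)} The autocorrelation limit $\gamma=\lim_R |\Lam_R|^{-1}\delta_{\Lam_R}\ast\delta_{-\Lam_R}$ is indeed positive, positive definite, translation-bounded, with $\gamma(\{0\})=1$ and $\supp\gamma\subset\{\ft{\1}_A=0\}\cup\{0\}$ --- but $\gamma$ is \emph{not} $\tfrac12(\delta_\Lam+\delta_{-\Lam})$, so establishing that $\gamma$ is positive definite says nothing about your original $\al$. If instead you switch candidates and set $\al:=m(A)^{-1}\check\gamma$, then $\al\ge0$, $\ft\al\ge0$, and $\ft\al(\{0\})=m(A)^{-1}$ are fine, but the support condition on $\gamma$ only yields $\1_A\ast\al=1$ a.e., which by itself does \emph{not} force $\al(\{0\})=1$ or $\al|_{\Del(A)\setminus\{0\}}=0$ (e.g.\ Lebesgue measure satisfies $\1_A\ast m=1$ when $m(A)=1$). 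What you actually need is the product identity $F\cdot\al=m(A)\delta_0$ with $F=\1_A\ast\1_{-A}$, and on the Fourier side this is equivalent to the \emph{convolution} identity $|\ft{\1}_A|^2\ast\gamma=m(A)^2$ --- i.e.\ that the Parseval tiling passes from $\delta_\Lam$ to the autocorrelation limit $\gamma$. This inheritance is the real content of the construction; it can be proved (and is essentially what \cite[Theorem~3.1]{LM22} does), but your sketch neither states it nor indicates how to verify it.
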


\begin{proof}
This is a consequence of a result proved in
\cite[Theorem 3.1]{LM22}. Stated
using the terminology of the present paper,
the result asserts that if $A$ is spectral,
then there exists 
a tempered distribution  $\al$ on $\R^d$,
which is admissible for the dual Delsarte problem
with respect to  the set $U = \Del(A)$, and such that
$\ft{\al}(\{0\}) = m(A)^{-1}$. 

This result thus implies that $D'(\Del(A)) \ge m(A)^{-1}$.
In turn, as a consequence of the weak linear duality  
inequality \eqref{eq:D1.1}, it follows that 
$D(\Del(A)) \le m(A)$. As before, this
 suffices to conclude the proof, since the
 converse  inequality \eqref{eqC2.8} also holds.
\end{proof}

\subsection{Convex bodies}
\label{ss:convex}

By a \emph{convex body} $A \sbt \R^d$ we mean 
 a compact convex set with nonempty interior.
 A major recent result proved in \cite{LM22}
  states that the Fuglede conjecture 
holds for convex bodies in all dimensions.
That is, a convex body $A \sbt \R^d$
 is  a spectral set if and only if it can tile
the space by translations.

If $A \sbt \R^d$ is a convex body
then the set $ \Del(A)$ is convex.
Moreover, if the convex body $A$ is origin-symmetric,
then $\Del(A)$ is equal to the interior of the set $2 A$.

The next result shows that for a convex body, the converse to
 Theorems \ref{thm:eutile} and \ref{thm:euspectral}
 is also true: the condition $D(\Del(A)) = m(A)$
in fact characterizes the
convex bodies $A \sbt \R^d$ which
tile the space by translations (or equivalently, which are spectral).

\begin{thm}
\label{thm:C1.2}
Let $A \sbt \R^d$  be a convex body.
The equality $D(\Del(A)) = m(A)$ holds if and only if $A$ tiles
 by translations.
\end{thm}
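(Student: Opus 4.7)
The forward direction is immediate from \thmref{thm:eutile}, so I focus on the reverse implication: assume $A$ is a convex body with $D(\Del(A)) = m(A)$, and deduce that $A$ tiles $\R^d$ by translations.

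Since $A - A$ is again a convex body, the set $U := \Del(A) = \mathrm{int}(A - A)$ is an open, origin-symmetric, convex set of finite measure whose complement coincides with the closure of its interior; thus conditions \eqref{it:dsadmiti}--\eqref{it:dsadmitiii} are satisfied and the machinery of \secref{s:delsarte} applies. The function $f = m(A)^{-1}\,\1_A \ast \1_{-A}$ is Delsarte admissible and satisfies $\int f = m(A) = D(U)$, hence it is extremal; by \thmref{thm:D8.1} an extremal dual distribution $\al$ also exists. Applying \thmref{thm:D3.9.2} to the pair $(f,\al)$ and using $\ft f = m(A)^{-1}|\ft \1_A|^2$, one finds that the positive measure $\ft\al$ is supported in $\{\ft \1_A = 0\} \cup \{0\}$ and carries an atom of mass $m(A)^{-1}$ at the origin. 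Multiplying by $\ft\1_A$ gives $\ft\1_A \cdot \ft\al = \del_0$, and taking inverse Fourier transforms yields the measure-tiling identity $\1_A \ast \al = 1$ a.e.

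The decisive and most delicate step is to upgrade this measure-theoretic identity to a genuine translation tiling. Concretely, the plan is to show that $\al$ must be a Dirac comb $\del_\Lam$ on a uniformly discrete set $\Lam \ni 0$, so that $\1_A \ast \del_\Lam = 1$ a.e.\ means exactly that $A + \Lam$ is a partition of $\R^d$ up to null sets. Two structural pieces are at hand. First, since $\al - \del_0$ is supported off $\Del(A)$, the restriction of $\al$ to the open neighborhood $\Del(A)$ of the origin equals $\del_0$, so $0$ is a locally isolated atom of unit mass. Second, by Fourier duality with the unit atom of $\al$ at the origin, $\ft\al$ is a positive translation-bounded measure of density $1$ in $\R^d$, whereas its support lies on the real-analytic null set $\{\ft\1_A = 0\}$, generically of codimension at least one. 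A non-atomic component of $\ft\al$ would be concentrated on such a lower-dimensional set and contribute density zero, so the mass must concentrate in an atomic part, endowing $A$ with a candidate spectrum. Invoking the Fuglede conjecture for convex bodies established in \cite{LM22}---a convex body is spectral if and only if it tiles by translations---then produces the required tiling set. The main obstacle is making rigorous the passage from ``density-$1$ positive translation-bounded measure on a real-analytic null set'' to a concrete Dirac comb whose atomic locations form an orthogonal spectrum of $L^2(A)$; this seems to require refining the construction in \cite{LM22} or establishing a converse of its Theorem~3.1 that reconstructs a spectrum of $A$ directly from the dual Delsarte extremizer $\al$.
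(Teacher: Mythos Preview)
Your argument is correct and matches the paper exactly up to the identity $\1_A \ast \al = 1$ a.e. The gap is in what you do next. You propose to show that $\al$ (or $\ft\al$) is a Dirac comb, extract a spectrum for $A$, and then invoke the equivalence ``spectral $\Leftrightarrow$ tiles'' for convex bodies from \cite{LM22}. As you yourself note, the passage from ``positive translation-bounded measure supported on an analytic null set'' to ``purely atomic'' is not justified, and reconstructing an orthogonal spectrum from the atoms would require further work. This step is not a minor detail: it is not clear that the extremal dual $\al$ need be a Dirac comb at all, and no result in the paper or in \cite{LM22} supplies this.

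The paper avoids this difficulty entirely by observing that the identity you have already established \emph{is} the endpoint. Since $\al = \del_0 + \be$ with $\be$ a positive measure, the equation $\1_A \ast \al = 1$ a.e.\ rewrites as $\1_A \ast \be = \1_{A^\cm}$ a.e., which is precisely the definition of $A$ \emph{weakly tiling its complement}. One then invokes \cite[Theorem 1.4]{KLM23}: a convex body that weakly tiles its complement must tile properly by translations. No discreteness of $\al$, no spectrum, and no appeal to the spectral--tiling equivalence are needed; the weak-tiling route is both simpler and already available in the literature you are citing.
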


This result has an interesting consequence for packing
density estimates:

\begin{cor}
If a convex body $A \sbt \R^d$ does not tile the space,
then the Delsarte bound 
$D(\Delta(A))^{-1}$
for the greatest packing density  
provides a strictly better estimate 
than the trivial volume packing bound $m(A)^{-1}$.
\end{cor}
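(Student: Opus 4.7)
The ``if'' direction is immediate from Theorem~\ref{thm:eutile}. For the ``only if'' direction, suppose $A$ is a convex body with $D(\Del(A)) = m(A)$; the plan is to show that $A$ is spectral, then invoke the resolution of Fuglede's conjecture for convex bodies in \cite{LM22} to conclude that $A$ tiles.

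Set $U = \Del(A) = \operatorname{int}(A-A)$. Since $A-A$ is an origin-symmetric convex body, $U$ is an origin-symmetric open set of finite measure whose complement equals the closure of its interior, so $U$ satisfies \eqref{it:dsadmiti}--\eqref{it:dsadmitiii}. The function $f = m(A)^{-1}\,\1_A\ast\1_{-A}$ is Delsarte admissible with $\int f = m(A) = D(U)$, hence it is a Delsarte extremizer. I would then use Theorem~\ref{thm:D8.1} to extract a dual Delsarte extremizer $\alpha = \delta_0 + \beta$, with $\beta\ge 0$ supported on $U^\cm$ and $\ft\alpha(\{0\}) = 1/m(A)$, and apply Theorem~\ref{thm:D3.9.2} to the pair $(f,\alpha)$. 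Using $\ft f = m(A)^{-1}|\ft\1_A|^2$, the consequences read: $\ft\alpha$ is a positive translation-bounded measure supported on $\{\ft\1_A=0\}\cup\{0\}$ satisfying $|\ft\1_A|^2\cdot\ft\alpha = m(A)\,\delta_0$, and $|\ft\1_A|^2\ast\ft\alpha = m(A)$ a.e.

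The central task is then to extract a true spectrum $\Lambda$ of $A$ from the measure $\ft\alpha$. If one had $\ft\alpha = m(A)^{-1}\delta_\Lambda$ for a countable set $\Lambda\ni 0$, the first identity above would force $\Lambda\subset\{\ft\1_A=0\}\cup\{0\}$ and the second would yield the completeness relation $\sum_{\lambda\in\Lambda}|\ft\1_A(\xi-\lambda)|^2 = m(A)^2$ for a.e.~$\xi$; together these say precisely that $\Lambda$ is a spectrum of $A$. I would therefore aim to upgrade the possibly continuous measure $\ft\alpha$ to such a discrete measure, exploiting that, for a convex body $A$, the zero set $\{\ft\1_A = 0\}$ is a real-analytic subvariety of positive codimension, and positive translation-bounded measures supported on such thin sets exhibit the rigidity of Fourier quasicrystals. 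Heuristically, the extreme points of the convex set of dual Delsarte extremizers should be discrete measures; equivalently, a suitable averaging/approximation argument along F\o lner sequences should produce a discrete extremizer of the desired form.

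Once $A$ is shown to be spectral, the main theorem of \cite{LM22} concludes that $A$ tiles $\R^d$ by translations. The most substantial obstacle will be the quantization step above: extracting a discrete spectrum from the extremal measure $\ft\alpha$, using the specific structure of convex bodies. Until this quantization is carried out, the ``distributional spectrality'' captured by Theorem~\ref{thm:D3.9.2} does not by itself suffice to invoke the Fuglede result of \cite{LM22}, and overcoming this gap is the heart of the proof.
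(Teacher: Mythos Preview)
Your setup is exactly right through the application of \thmref{thm:D3.9.2}: $f=m(A)^{-1}\1_A\ast\1_{-A}$ is a Delsarte extremizer, a dual extremizer $\alpha=\delta_0+\beta$ exists, and $\ft\alpha$ is supported on $\{\ft\1_A=0\}\cup\{0\}$. The genuine gap is what comes next. The ``quantization'' step --- upgrading $\ft\alpha$ to a discrete measure $m(A)^{-1}\delta_\Lambda$ in order to exhibit a spectrum --- is not carried out, and there is no known mechanism (extreme-point arguments, F{\o}lner averaging, quasicrystal rigidity) that would produce one in this generality. Your own caveat at the end is accurate: as written, the argument does not close.

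The point you are missing is that spectrality is an unnecessary detour. From $\ft\1_A\cdot\ft\alpha=\delta_0$ and the translation-boundedness of $\alpha$ (\lemref{lem:D6.9.1}, \lemref{lemB1}), one gets directly $\1_A\ast\alpha=1$ a.e., hence $\1_A\ast\beta=\1_{A^\cm}$ a.e. This says exactly that $A$ \emph{weakly tiles its complement} by the positive measure $\beta$. For convex bodies, weak tiling already implies proper tiling by \cite[Theorem~1.4]{KLM23}; no passage through \cite{LM22} or any spectrum is needed. In other words, the dual Delsarte extremizer $\alpha$ itself --- possibly continuous --- is all one needs, and trying to discretize it is both unnecessary and the source of your obstacle.
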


It follows that if $A$ does not tile, then there is always
a nontrivial packing density estimate 
(i.e.\ better than the volume packing bound $m(A)^{-1}$) of the form  $(\int f)^{-1}$ 
for some appropriately chosen Delsarte admissible function $f$.

The proof of \thmref{thm:C1.2} is based on the connection of the problem
to the concept of weak tiling, 
 introduced in \cite{LM22}
 as a relaxation of proper tiling.
We say that a bounded, measurable set $A \sbt \R^d$
\emph{weakly tiles its complement} 
if there exists a positive, locally finite measure $\nu$
such that $\1_{A} \ast \nu = \1_{A^\cm}$ a.e.
This notion generalizes proper tilings
which correspond to the case 
where the measure $\nu$ is a sum of unit masses.

The notion of weak tiling played a key role in the proof
of Fuglede's conjecture  for convex domains, due to the 
 fact that every spectral
set must weakly tile its complement, see \cite[Theorem 1.5]{LM22}.
This result can be viewed as a weak form of the
``spectral implies tiling'' part of  Fuglede's conjecture.
Note that generally, a set that weakly tiles
its complement need not tile properly
(as an example, take any spectral set which does not tile).

However, it was proved in \cite[Theorem 1.4]{KLM23} that for 
a convex body, weak tiling implies tiling.
More precisely,   if a convex body
$A \sbt \R^d$ weakly tiles its complement, then $A$ 
must be a convex polytope which can also tile  properly by translations.
The proof is composed of several results obtained in different
papers, see 
\cite[Section 2]{KLM25} for an overview of the
 ingredients of the proof, the relevant references, and
 a simplification of part of the proof.

We  now turn to show that the condition $D(\Del(A)) = m(A)$
indeed characterizes the convex bodies $A \sbt \R^d$ which
tile  by translations.
Note that if $A$ is a convex body,
then the set $U  = \Del(A)$ 
is a bounded origin-symmetric open convex set, 
and hence satisfies all the three conditions
\eqref{it:dsadmiti}, \eqref{it:dsadmitii}, \eqref{it:dsadmitiii}.

\begin{proof}[Proof of \thmref{thm:C1.2}]
We  already know that
if  $A$ tiles by translations, then the equality
 $D(\Del(A)) = m(A)$ holds (\thmref{thm:eutile}).
We need to prove the converse direction.

Assume that  $D(\Del(A)) = m(A)$. Then  the
function  $f = m(A)^{-1} \1_A \ast \1_{-A}$ is extremal
for the Delsarte problem with $U = \Del(A)$. 
By \thmref{thm:D8.1}\ref{it:dex.2},
the dual Delsarte problem also admits
at least one extremal $\al$.
The strong linear duality equality
\eqref{eq:D1.3} gives us that
$\ft{\al}(\{0\}) = D'(\Del(A)) = m(A)^{-1}$.
Moreover, by  \thmref{thm:D3.9.2}\ref{it:dsrel.2},
 the measure $\ft{\al}$ must be supported on the set
 $\{t : \ft{f}(t)=0\} \cup \{0\}$.
 However, since  $\ft{f} = m(A)^{-1} |\ft{\1}_A |^2$, 
 the two functions $\ft{f}$ and $\ft{\1}_A$ have
 the same set of zeros.  It thus follows that
$\ft{\1}_A \cdot \ft{\al} = \del_0$.
By \lemref{lem:D6.9.1}, the measure
$\al$ is translation-bounded, so
we may use \lemref{lemB1} to conclude 
that $\1_A \ast \al = 1$ a.e. 
In turn, since we have 
$\al = \del_0 + \be$  for some
positive measure $\be$, this implies that 
$\1_A \ast \be = \1_{A^\cm}$
a.e., that is, $A$ weakly tiles
its complement.
Finally,  \cite[Theorem 1.4]{KLM23} yields that $A$ can also tile properly
by translations.
\end{proof}

\subsection{Tur\'{a}n domains}
Lastly, we mention another interesting consequence of Theorem \ref{thm:C1.2} in  relation to the possible existence of non-Tur\'an domains. 

First we recall that for a convex bounded 
origin-symmetric open set $U \sbt \R^d$ we have

\begin{equation}
D(U) \ge T(U) \ge 2^{-d} m(U).
\end{equation}
The last inequality is true since the closure of
the set $\frac1{2}U$ is a convex body $A$
satisfying $\Del(A) = U$ and $m(A) = 2^{-d} m(U)$,
and so the function
 $f = m(A)^{-1} \1_A \ast \1_{-A}$ is 
 Tur\'{a}n admissible and satisfies
$\int f = 2^{-d} m(U)$.

As an example, let $U \sbt \R^d$ be an open ball 
centered at the origin. In this case, it
 is known that  $T(U) = 2^{-d} m(U)$, see \cite{Gor01}, \cite{KR03}, \cite{Gab24}.
On the other hand, in dimensions greater than one,
a ball cannot tile by translations. Hence \thmref{thm:C1.2} yields that
$D(U) > T(U) = 2^{-d} m(U)$, that is, the Delsarte constant of a ball
is strictly greater than the corresponding Tur\'{a}n constant.

Recall that it
is not known whether there exists a convex
bounded origin-symmetric open set $U \sbt \R^d$
which is not a Tur\'an domain, i.e.\ such that 
$T(U) > 2^{-d} m(U)$.
The following result gives a possible line
of attack for constructing a non-Tur\'an domain.

\begin{cor}
Assume that there exists a convex
bounded origin-symmetric open set $U \sbt \R^d$,
which does not tile, but $T(U)=D(U)$.
Then $U$ is a non-Tur\'an domain.
\end{cor}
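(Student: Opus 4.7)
The plan is to reduce the statement to \thmref{thm:C1.2} by passing from the open convex body $U$ to the compact convex body $A = \overline{\tfrac12 U}$, and then chaining the known inequalities with the hypothesis $T(U) = D(U)$.

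First I would fix $A := \overline{\tfrac12 U}$, which is a convex body in $\R^d$ since $U$ is bounded, open, convex and origin-symmetric. A direct computation (already used in \subsecref{ss:diffandconvex} and \subsecref{ss:convex}) gives that the essential difference set satisfies $\Delta(A) = U$ and $m(A) = 2^{-d} m(U)$. Next, I would observe that $A$ tiles by translations if and only if $U$ does: tiling in the sense of the paper is a measure-theoretic notion, insensitive to boundaries of measure zero, and invariant under the positive dilation by factor $2$. Thus the hypothesis that $U$ does not tile translates directly into the statement that the convex body $A$ does not tile by translations.

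Now I would apply \thmref{thm:C1.2} to $A$: since $A$ is a convex body which does not tile, the theorem yields the strict inequality
\begin{equation}
D(U) \;=\; D(\Delta(A)) \;>\; m(A) \;=\; 2^{-d} m(U).
\end{equation}
Combining this with the hypothesis $T(U) = D(U)$ gives $T(U) > 2^{-d} m(U)$, which is exactly the statement that $U$ is not a Tur\'an domain.

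There is essentially no obstacle here beyond checking that the identifications $\Delta(A) = U$ and the equivalence of tilings for $A$ and $U$ are correct; these are routine once one recalls that for a convex bounded origin-symmetric open set, $\Delta(\overline{\tfrac12 U})$ coincides with the interior of $U$, namely $U$ itself. All the substantive content, including strong duality and the weak-tiling-implies-tiling theorem for convex bodies, is already packaged into \thmref{thm:C1.2} and is simply invoked.
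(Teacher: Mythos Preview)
Your proof is correct and follows essentially the same route as the paper's: apply \thmref{thm:C1.2} to the convex body $A=\overline{\tfrac12 U}$ (so that $\Delta(A)=U$ and $m(A)=2^{-d}m(U)$), deduce $D(U)>2^{-d}m(U)$ from the non-tiling hypothesis, and combine with $T(U)=D(U)$. The paper states this in two lines without spelling out the identification $U=\Delta(A)$; you have simply made that step explicit.
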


\begin{proof}
Indeed, if $U$ does not tile then by
\thmref{thm:C1.2} we have
 $D(U) > 2^{-d} m(U)$, so using
 the assumption that $T(U) = D(U)$
this implies that $U$ is a non-Tur\'an domain.
\end{proof}

In conclusion, for a convex body $A \sbt \R^d$, 
it is instructive to inspect the chain of inequalities 
\begin{equation}
m(A)\le 
2^{-d} m(\Delta(A))\le T(\Delta(A))
\le D(\Delta(A)).
\end{equation}
 If $A$ is non-symmetric then the first inequality is strict by the Brunn-Minkowski inequality. If $A$ is symmetric but does not tile the space, then the first inequality
 becomes  an equality, but at least one of the other two inequlities must be strict by Theorem \ref{thm:C1.2}. 
 Finally, if $A$ tiles the space, then all inequalities become equalities.


\section{Remarks}\label{s:remarks}

\subsection{}
Some authors define the Tur\'{a}n constant 
alternatively as the supremum
of $\int f$ over all  the continuous
real-valued functions $f$, 
\emph{with compact support contained in the open set $U$},
such that
$f(0) = 1$ and $\ft{f}$ is nonnegative.
We denote this supremum by $T_0(U)$.

It is obvious that $T_0(U) \le T(U)$. The question
whether the equality 
$T_0(U) = T(U)$ holds or not, seems to be 
largely open. It is easy to show that the equality
holds  if $U \sbt \R^d$ is a convex bounded 
origin-symmetric open set, or more generally, 
if  $U \sbt \R^d$ is a bounded  origin-symmetric  open
set  assumed to be
\emph{strictly star-shaped} (with respect to the origin),
which by definition means that the closure of
$\lam U$ is contained in $U$ for every $0 \le \lam < 1$,
see e.g.\ \cite[Theorem 1]{Mav13}.

Another case where the equality
is known to hold is when $U \sbt \R$ is a 
bounded open set, $0 \in U = - U$,
composed of finitely
many intervals, see \cite[Theorem 2]{Mav13}.

No example seems to be known
of an open set $U \sbt \R^d$ of finite measure,
$0 \in U = -U$, such that $T_0(U) < T(U)$.

See also \cite[Section 5]{BRR24} where the
question is discussed 
in the general context of locally compact
abelian groups.

\subsection{}
The Tur\'{a}n problem and its dual were also considered by
Gabardo in \cite{Gab24}.  The main result 
\cite[Theorem 4]{Gab24} concerns the case
where $U$ is an open ball
(centered at the origin)  in $\R^d$.
In this case, Gabardo constructed a
tempered distribution $\al$ which is admissible 
for the dual Tur\'{a}n problem, such that
$\ft{\al}(\{0\}) = 2^{d} m( U )^{-1}$.
This allowed him to conclude that 
$\al$ is extremal for the dual Tur\'{a}n problem,
and to obtain a new proof of the fact that
 if $U$ is an open ball in $\R^d$ then
 $T(U) = 2^{-d}  m( U)$. It is also noted 
\cite[Proposition 18]{Gab24} that
this extremal tempered distribution $\al$ 
is \emph{not a measure}.

In \cite{Gab19}, \cite{Gab20}, Gabardo announced
a result concerning the factorization of positive definite functions through
convolutions in locally compact abelian groups, 
which implies that the strong linear duality equality
 $T(U)T'(U)=1$ holds for any open set 
$U \sbt \R^d$ of finite measure, $0 \in U = -U$.
 To our knowledge, the proof
remains unpublished.


\end{document}